\theoremstyle{plain}
\newtheorem{thm}{Theorem}[section]
\newtheorem{claim}{Claim}[thm]
\newtheorem{lemma}[thm]{Lemma}
\newtheorem{fact}[thm]{Fact}
\newtheorem{cor}[thm]{Corollary}
\newtheorem*{thma}{Theorem}
\theoremstyle{definition}
\newtheorem*{definition}{Definition}
\newtheorem{defn}[thm]{Definition}
\newtheorem{notation}[thm]{Notation}
\newtheorem{conv}[thm]{Convention}
\theoremstyle{remark}
\newtheorem{remark}[thm]{Remark}
\DeclareMathOperator{\ord}{Ord}
\DeclareMathOperator{\otp}{otp}
\DeclareMathOperator{\cf}{cf}
\DeclareMathOperator{\dom}{dom}
\DeclareMathOperator{\acc}{acc}
\DeclareMathOperator{\nacc}{nacc}
\newcommand*\axiomfont[1]{\textsf{\textup{#1}}}
\newcommand\zfc{\axiomfont{ZFC}}
\DeclareMathOperator{\onto}{\textsf{onto}}
\DeclareMathOperator{\bd}{bd}
\DeclareMathOperator{\hull}{Hull}
\newcommand{\one}{\mathop{1\hskip-3pt {\rm l}}}
\newcommand\res{\mathbin{\upharpoonright}}
\newcommand\rest{\mathbin{\upharpoonright\mkern-12.5mu\raisebox{0.7pt}{$\upharpoonright$}}}
\newcommand\rests{\mathbin{\upharpoonright\mkern-9mu\raisebox{0.1pt}{$\upharpoonright$}}}
\newcommand{\forces}{\Vdash}
\newcommand{\nforces}{\nVdash}
\newcommand\br{\blacktriangleright}
\newcommand\s{\subseteq}
\renewcommand\mid{\mathrel{|}\allowbreak}
\title{Failure of an higher analogue of mho}
\author{Ido Feldman}
\address{Department of Mathematics, Bar-Ilan University, Ramat-Gan 5290002, Israel.}
\email{ido.feldman@live.com}
\begin{document}
\begin{abstract} Justin Moore's weak club-guessing principle $\mho$
admits various possible generalizations to the second uncountable cardinal.
One of them was shown to hold in $\zfc$ by Shelah.
A stronger one was shown to follow from several consequences of the continuum hypothesis by Inamdar and Rinot.
Here we prove that the stronger one may consistently fail. Specifically, starting with a supercompact cardinal and an inaccessible cardinal above it,
we devise a notion of forcing consisting of finite working parts and finitely many two types of models as side conditions,
to violate this analog of $\mho$ at the second uncountable cardinal.
\end{abstract}
\maketitle
\section{Introduction}
In \cite{MR2199228}, Moore proved it is consistent for the class of uncountable linear orders to admit a $5$-element basis.
In particular, this shows that the class of Aronszajn lines admits a $2$-element basis, solving a longstanding question of Shelah from \cite{AbSh:114}.
Later on, in \cite{MR2444284}, Moore proved a complementary result providing a very weak sufficient condition for the class of Aronszajn lines to not admit a basis of size two.
Moore's principle is denoted $\mho$ and is a very weak club-guessing principle,
which cannot be destroyed by an $\omega$-proper notion of forcing (see \cite{MR3821630}).
In the notation of the following natural generalization, Moore's principle $\mho$ coincides with $\mho(\omega_1,\omega)$.
\begin{defn} For a stationary subset $S$ of a regular uncountable cardinal $\kappa$,
and for a cardinal $\theta<\kappa$, $\mho(S,\theta)$ asserts the existence of a sequence $\langle (h_\delta,C_\delta)\mid \delta\in S\rangle$
such that:
\begin{itemize}
\item For every $\delta\in S$, $h_\delta$ is a function from $\delta$ to $\theta$;
\item For every $\delta\in S$, $C_\delta$ is a club in $\delta$ of order-type $\cf(\delta)$,
and for every $\alpha<\delta$, $h_\delta(\alpha)=h_\delta(\min(C_\delta\setminus\alpha))$;
\item For every cofinal $A\s\kappa$, there is a $\delta\in S$ such that for every $\tau<\theta$,
$$\sup\{ \alpha\in A\cap\delta\mid \min(C_\delta\setminus\alpha)\in S\ \&\ h_\delta(\alpha)=\tau\}=\delta.$$
\end{itemize}
\end{defn}

Moving one cardinal up, we focus on the stationary set $S^2_1$ of all ordinals below $\omega_2$ of cofinality $\omega_1$.
By \cite[Claim 3.1]{Sh:413}, $\mho(S^2_1,\omega)$ holds outright in $\zfc$,
hence the main open problem is whether this could be improved to get $\mho(S^2_1,\omega_1)$ in $\zfc$.
In \cite[\S4]{paper46}, a sufficient condition for $\mho(S^2_1,\omega_1)$ to hold was given, namely,
the principle $\onto(J^{\bd}[\omega_1],\omega_1)$ from \cite{paper47}
which follows from various weak consequences of the continuum hypothesis.
In \cite[\S6]{MR2298476}, Larson demonstrated that $\onto(J^{\bd}[\omega_1],\allowbreak\omega_1)$ may fail via proper forcing using finitely many countable models as side conditions.
In this paper, Larson's result is extended and the consistency of the failure of $\mho(S^2_1,\omega_1)$ is established.
\begin{thma} Assuming the consistency of a supercompact cardinal and an inaccessible cardinal above it,
it is consistent that $\mho(S^2_1,\omega_1)$ fails.
\end{thma}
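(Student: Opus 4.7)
The plan is to imitate Larson's proper forcing from \cite[\S6]{MR2298476} but with Neeman's two-sided side-conditions template in place of the one-sided (countable-model only) one. Let $\kappa$ be supercompact and $\lambda>\kappa$ inaccessible, and first perform a Laver preparation so that the supercompactness of $\kappa$ becomes indestructible under the main step. The main forcing $\mathbb{P}$ will collapse $\kappa$ onto $\omega_2$ while preserving $\omega_1$, and its generic will project to a cofinal set $A\subseteq\omega_2$ together with finitely many ``color-avoidance promises''; the goal is that in $V[G]$ the set $A$ witnesses the failure of $\mho(S^2_1,\omega_1)$ for every candidate sequence.

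A condition in $\mathbb{P}$ is a tuple $(w_p,\Sigma_p,\mathcal{A}_p,\mathcal{B}_p)$, where $w_p$ is a finite partial function from $\kappa$ to $2$ (a finite approximation to the characteristic function of $A$); $\Sigma_p$ is a finite set of promises of the form $(\dot{\vec s},\delta,\tau,\beta)$, each declaring that past $\beta$ and below $\delta$ no ordinal $\alpha$ with $\dot h_\delta(\alpha)=\tau$ and $\min(\dot C_\delta\setminus\alpha)\in\dot S^2_1$ may enter $A$; $\mathcal{A}_p$ is a finite $\in$-chain of countable elementary submodels of $H_\lambda$; and $\mathcal{B}_p$ is a finite $\in$-chain of elementary submodels of $H_\lambda$ of cardinality less than $\kappa$, internally approachable of countable length. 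The two chains interlace in the Neeman style, and $q\le p$ requires $w_p\subseteq w_q$, $\Sigma_p\subseteq\Sigma_q$, both chains to grow, and a Neeman-style isolation clause forbidding $q$ from placing new values of $w$ ``inside'' the already-enumerated submodels. Properness for countable models preserves $\omega_1$; properness for the $\omega_1$-sized models together with internal approachability preserves $\omega_2$; a standard cardinal count then gives $\kappa=\omega_2^{V[G]}$, and $A:=\{\alpha : (\exists p\in G)\, w_p(\alpha)=1\}$ is cofinal in $\omega_2$.

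The heart of the argument --- and the expected main obstacle --- is a density lemma: given any $\mathbb{P}$-name $\dot{\vec s}$ for a candidate $\mho$-sequence, any condition $p$, and any $\delta \in (\dot S^2_1)^{V[G]}$, there should exist $q\le p$ adding a promise $(\dot{\vec s},\delta,\tau,\beta)$ to $\Sigma_q$. To produce $\tau$ and $\beta$ one takes $N\in\mathcal{B}_p$ with $\dot{\vec s},\delta\in N$, sets $\beta:=\sup(N\cap\delta)<\delta$, and uses the supercompactness of $\kappa$, transferred to $\dot{\vec s}$ via the Laver function, to show that at least one color $\tau\in N\cap\omega_1$ can be consistently forbidden on $(\beta,\delta)$: any offending $\alpha$ would have to lie outside $N$, and the isolation clause together with the elementarity of $N$ prevents such an $\alpha$ from being legally added by any extension. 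Verifying this compatibility --- that a valid $\tau$ always exists and that inserting the promise preserves properness and does not conflict with the earlier promises or with the finite working part --- is the delicate core of the construction; the remaining verifications (precise definition of $\mathbb{P}$, properness for each model type, cardinal bookkeeping, and cofinality of $A$) should follow the now-standard Neeman template once the promises are threaded in compatibly.
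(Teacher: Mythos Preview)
Your plan differs from the paper's in architecture, and the difference is essential. You want a \emph{single} generic cofinal set $A\subseteq\kappa$ that defeats every candidate in the extension, with promises referencing arbitrary $\mathbb P$-names $\dot{\vec s}$. The paper instead runs a $\lambda$-stage iteration: a bookkeeping surjection $\phi:\lambda\to H_\lambda$ hands out, at each good stage $\gamma$, an $\mathbb O_\gamma$-name for a candidate, and that candidate is then killed by its own generic club $\Delta_1(D_\gamma(G))$. The iteration is what breaks the circularity (your promises refer to $\mathbb P$-names while $\mathbb P$ is defined through its promises) and what handles candidates defined \emph{from} the generic object: such a candidate has a nice name living in some $\mathbb O_\alpha$ by the $\lambda$-cc, so it is caught at a later stage $\gamma>\alpha$. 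A single-step poset with a single $A$ has no such mechanism.

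The density lemma, as you describe it, also does not work. You take $N\in\mathcal B_p$ with $\delta\in N$ and set $\beta:=\sup(N\cap\delta)<\delta$. If the large models are of the kind needed to make $\kappa$ the new $\omega_2$ --- i.e., $N\cap\kappa\in\kappa$, as with Magidor models --- then $\delta\in N$ implies $\delta\subseteq N$ and $\beta=\delta$; if instead they are $\aleph_1$-sized internally approachable models preserving $\omega_2^V$, then $\omega_2^V$ survives and $\kappa$ cannot become $\omega_2$, contrary to your stated goal. In the paper the relevant $\delta$ is never an element of a large model: it is $\delta_{\mathsf K}=\sup(\mathsf K\cap\kappa)$ for a Magidor model $\mathsf K$ already in the chain, and the avoided color $\tau^\gamma_{\delta_{\mathsf K}}$ is chosen when $\mathsf K$ first enters a condition, using properness of the \emph{earlier} stage $\mathbb O_\gamma$ (Remark~\ref{proper remark}) to bound the finitely many possible values of $\dot h^\gamma_{\delta_{\mathsf K}}$. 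The Laver preparation plays no role here; supercompactness is used only to make the standard Magidor models stationary. Finally, the paper is explicit that Shelah's $\zfc$ result $\mho(S^2_1,\omega)$ constrains the poset so that it is \emph{not} strongly proper: amalgamation requires the new notion of ``appropriate'' conditions (Lemma~\ref{finding appropriate}) and the intricate Clause~(b4) of Definition~\ref{defn33}, so the expectation that properness ``should follow the now-standard Neeman template'' is unwarranted.
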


The model witnessing the above theorem is obtained by forcing with two types of virtual models as side conditions,
building on the machinery developed in a series of papers by Velickovic and his co-authors.
Precisely, assuming $\kappa$ is a supercompact cardinal and $\lambda$ is an inaccessible cardinal above it,
we iteratively add a tower of $\lambda$ many clubs in $\kappa$, eventually collapsing $\kappa$ to $\aleph_2$,
$\lambda$ to $\aleph_3$, and ensuring that any potential candidate for a witness to $\mho(S^2_1,\omega_1)$
is destroyed by one of the clubs in our tower.
A condition in an `iterand' of our forcing is a triple of finite sets $\langle \mathcal M_p,d_p,F_p\rangle$, where the pair $\langle \mathcal M_p,d_p\rangle$
is a condition in the Velickovic-Mohammadpour poset from \cite{MohammadpourVelickovic}. Shelah's theorem on $\mho(S^2_1,\omega)$ imposes constraints which influences our very definition of $F_p$
and its interaction with the Velickovic-Mohammadpour pair $\langle \mathcal M_p,d_p\rangle$.

\subsection{Organization of this paper}
In Section~\ref{sec2}, we provide some preliminaries on virtual models and the posets $\mathbb M^\kappa_\lambda$ and $\mathbb P^\kappa_\lambda$ from \cite{MohammadpourVelickovic},
where $\kappa<\lambda$ is a pair of inaccessible cardinals.

In Section~\ref{sec3}, we present out forcing notion in two steps. First we define preconditions, and then define actual conditions.

In Section~\ref{sec4}, we verify that our notion of forcing has properness features sufficient to ensure that $\omega_1$ is preserved and $\kappa$ becomes $\omega_2$.

In Section~\ref{sec5}, we show that our forcing has the $\lambda$-cc, hence $\lambda$ becomes $\omega_3$ in the extension.

In Section~\ref{sec6}, we verify that $\mho(S^2_1,\omega_1)$ fails in the extension.

\section{A brief overview on virtual models} \label{sec2}

Throughout, $\kappa<\lambda$ stand for a pair of inaccessible cardinals.
All the content of this section is due to Velickovic and Mohammadpour \cite{MohammadpourVelickovic}
\begin{defn}
A structure $\mathcal A:=(A,\in,\kappa)$ is called \emph{suitable} if $A$ is transitive set and $\mathcal A$ satisfies $\zfc$ and $\kappa$ is translated to be inaccessible in $A$.
\end{defn}

Hereafter, we fix a suitable $\mathcal A:=(A,\in,\kappa)$.
\begin{defn}
Given $\alpha\in A\cap \ord$, let $\mathcal A_\alpha$ be the substructure of $\mathcal A$ with underlying set $A_{\alpha}:=A\cap V_{\alpha}$. Finally let $E:=\{\alpha\in A\mid \mathcal A_{\alpha}\prec\mathcal A\}$.

Given a set $\mathcal M$ of substructures of $\mathcal A$ we denote:
\begin{itemize}
\item[$\br$] $\pi_0(\mathcal M):=\{\mathsf{M}\in\mathcal M\mid \mathsf{M}\text{ is countable}\}$.

\item[$\br$] $\pi_1(\mathcal M):=\{\mathsf{M}\in\mathcal M\mid \mathsf{M}\text{ is uncountable}\}$.

\end{itemize}

In addition, given a substructure $\mathsf{M}$ of $\mathcal A$ we will often concern ourselves with $\delta_\mathsf{M}:=\sup(\mathsf{M}\cap \kappa)$.
Thus, we also denote:
\begin{itemize}
\item[$\br$] $\Delta_0(\mathcal M):=\{\delta_\mathsf{M} \mid \mathsf{M}\in\pi_0(\mathcal M)\}$.

\item[$\br$] $\Delta_1(\mathcal M):=\{\delta_\mathsf{M}\mid \mathsf{M}\in\pi_1(\mathcal M)\}$.
\end{itemize}
\end{defn}

\begin{defn}
For a substructure $\mathsf{M}$ of $\mathcal A=(A,\in,\kappa)$ and subset $X$ of $A$, let
$$\hull(\mathsf{M},X):=\{f(x)\mid f\in \mathsf{M}\text{ is a function, and } x\in\dom(f)\cap [X]^{<\omega}\}.$$
\end{defn}

\begin{fact}
Suppose $\mathsf{M}$ is an elementary submodel of $\mathcal A$ and $X$ is a subset of $A$.
Let $\delta:=\sup(\mathsf{M}\cap\ord)$, and suppose $X\cap A_{\delta}$ is nonempty.
Then, $\hull(\mathsf{M},X)$ is the least elementary submodel of $\mathcal A$ containing $\mathsf{M}$ and $X\cap A_{\delta}$.
\end{fact}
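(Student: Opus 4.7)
The plan is to prove three separate assertions about $\hull(\mathsf M,X)$: (i) it contains $\mathsf M\cup(X\cap A_\delta)$; (ii) it is an elementary submodel of $\mathcal A$; and (iii) it is included in any elementary submodel of $\mathcal A$ containing $\mathsf M\cup(X\cap A_\delta)$. Parts (i) and (iii) should be routine manipulations with the hull definition; the real work is in (ii), where the Tarski--Vaught test must be verified in a setup that only allows feeding a single unordered finite subset of $X$ to a single function in $\mathsf M$.

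For (i), I begin with the observation that any function $f\in \mathsf M$ satisfies $\dom(f)\in\mathsf M\s A_\delta$ and hence $\dom(f)\s A_\delta$ by transitivity, so in particular every $x\in\dom(f)\cap[X]^{<\omega}$ is actually a finite subset of $X\cap A_\delta$. To put $m\in \mathsf M$ in the hull, I use $\{(\emptyset,m)\}\in \mathsf M$ applied to $\emptyset\in[X]^{<\omega}$; to put $y\in X\cap A_\delta$ in the hull, I pick $\alpha\in \mathsf M\cap\ord$ with $y\in A_\alpha$ (possible since $\sup(\mathsf M\cap\ord)=\delta$) and use the identity function on $[A_\alpha]^1$, which belongs to $\mathsf M$ and sends $\{y\}$ to $y$. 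Part (iii) is then immediate: if $N\prec\mathcal A$ contains $\mathsf M\cup(X\cap A_\delta)$ and $f(x)\in\hull(\mathsf M,X)$, then $f\in N$ and every element of the finite set $x$ lies in $N$ by the observation above, so $x\in N$, and $f(x)\in N$ by elementarity.

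The crux is (ii). Fix a tuple $\bar w=(w_1,\dots,w_n)$ from $\hull(\mathsf M,X)$ with representations $w_i=f_i(x_i)$, and a formula $\varphi(v,\bar w)$ with $\mathcal A\models\exists v\,\varphi(v,\bar w)$. The hull only lets me manufacture new elements as $g(x)$ with $g\in \mathsf M$ and a single $x\in[X]^{<\omega}$, so I have to encode into this one $x$ enough information for $g$ to recover precisely the tuple $(x_1,\dots,x_n)$, not merely some tuple of subsets of $x$. The plan is to take $x:=x_1\cup\dots\cup x_n$, fix $\alpha\in \mathsf M\cap\ord$ with $f_1,\dots,f_n\in A_\alpha$ (so that $x\s A_\alpha$), and use elementarity of $\mathsf M$ to obtain a well-ordering $<^*\in \mathsf M$ of $A_\alpha$, which exists since $\mathcal A\models\zfc$. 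Enumerate $x$ as $\{y_1,\dots,y_k\}$ in $<^*$-order and set $S_i:=\{j\leq k\mid y_j\in x_i\}$. The essential observation is that each $S_i$ is a finite subset of $\omega$, hence hereditarily finite, and so lies in $\mathsf M$ automatically.

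Then, again by elementarity, $\mathsf M$ supplies a function $g\in \mathsf M$ on $[A_\alpha]^{<\omega}$ which sends each $y\in[A_\alpha]^{<\omega}$ of size $k$ with $<^*$-enumeration $\{z_1,\dots,z_k\}$ to some witness of $\exists v\,\varphi\bigl(v,f_1(\{z_j\mid j\in S_1\}),\dots,f_n(\{z_j\mid j\in S_n\})\bigr)$ whenever one exists. Applying $g$ to $y:=x$, the $S_i$'s read off exactly $(x_1,\dots,x_n)$, so $g(x)$ is a witness of $\varphi(\cdot,\bar w)$ and lies in $\hull(\mathsf M,X)$; this closes Tarski--Vaught and yields (ii).
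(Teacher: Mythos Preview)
The paper does not supply a proof of this statement; it is recorded as a Fact imported from \cite{MohammadpourVelickovic} without argument. Your proof is correct. The three-part decomposition is the natural one, and your handling of the Tarski--Vaught step---encoding the tuple $(x_1,\dots,x_n)$ as the single finite set $x=x_1\cup\dots\cup x_n$ together with index sets $S_i\in V_\omega\subseteq\mathsf M$ read off via a well-order $<^*\in\mathsf M$ of $A_\alpha$---is exactly the device needed to manufacture a witness of the form $g(x)$ with $g\in\mathsf M$ and a \emph{single} $x\in[X]^{<\omega}$, as the one-argument form of $\hull$ demands.
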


\begin{defn}
Suppose $\alpha\in E$. We let $\mathcal A_{\alpha}$ denote the pool of all transitive $A$'s which are elementary extensions of $V_{\alpha}$ and of the same cardinality.
\end{defn}

\begin{defn}[Virtual models]
Suppose $\alpha\in E$. We let $\mathcal V_{\alpha}$ denote the collection of all substructures $\mathsf{M}$ of $(V_{\lambda},\in,\kappa)$ of size less than $\kappa$, such that if $A:=\hull(\mathsf{M},V_{\alpha})$, then $A\in\mathcal A_{\alpha}$ and $\mathsf{M}$ is elementary in $(A,\in,\kappa)$.

We let $\mathcal V$ denote the union over all $\mathcal V_\alpha$'s.
\end{defn}

We refer to the elements of $\mathcal V_{\alpha}$ as \emph{$\alpha$-models}.
Note that if $\mathsf{M}\in\mathcal V_{\alpha}$, then $\sup(\mathsf{M}\cap\text{ORD})\geq\alpha$.
In general $\mathsf{M}$ is not elementary in $V_{\lambda}$, this happens only if $\mathsf{M}\s V_{\alpha}$,
in which case we think of $\mathsf{M}$ as a \emph{standard} model.
\begin{notation}
For $\mathsf M\in\mathcal V$, we let $\eta(\mathsf M)$ be the unique ordinal such that $\mathsf M\in\mathcal V_{\alpha}$.
\end{notation}
We now have a natural way to define the first type models.
\begin{defn}[Countable virtual models]
For $\alpha\in E$. Let $\mathcal B_{\alpha}$ denote the collection of countable models in $\mathcal V_{\alpha}$. Denote by $\mathcal B_{st}$ the collection of standard models.
\end{defn}

\begin{defn}[Magidor models]
We say a model $\mathsf{M}$ is $\kappa$-Magidor model if for $\bar{\mathsf{M}}$ the transitive collapse of $\mathsf{M}$ and $\pi$ the corresponding collapsing map,
for some $\bar{\gamma}<\kappa$, $\bar{\mathsf{M}}=V_{\bar{\gamma}}$, $\cf(\bar{\gamma})>\pi(\kappa)$ and $V_{\pi(\kappa)}\s \mathsf{M}$.
\end{defn}

\begin{defn}
Let $\mathcal U^{\kappa}_{\alpha}$ denote the collection of all $\mathsf{M}\in\mathcal V_{\alpha}$ that are $\kappa$-Magidor models.

We let $\mathcal U^\kappa$ denote the union over all $\mathcal U^\kappa_\alpha$'s.
\end{defn}
We denote the set of all standard models in $\mathcal U^{\kappa}$ as $\mathcal U^{\kappa}_{\text{st}}$.

\begin{fact}
If $\kappa$ is supercompact, then $\mathcal U_{\text{st}}^{\kappa}$ is stationary in $\mathcal \mathsf{P}_{\kappa}(V_{\lambda})$.
\end{fact}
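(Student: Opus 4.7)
The plan is to execute a standard supercompactness reflection argument. Fix an arbitrary algebra $F\colon[V_\lambda]^{<\omega}\to V_\lambda$; stationarity amounts to producing $\mathsf M\in\mathcal U^\kappa_{\mathrm{st}}\cap\mathcal P_\kappa(V_\lambda)$ closed under $F$. Invoking the $\lambda$-supercompactness of $\kappa$, fix an elementary embedding $j\colon V\to N$ with critical point $\kappa$, $j(\kappa)>\lambda$, and $N^{\lambda}\subseteq N$. Working inside $N$, put $\mathsf M:=j[V_\lambda]$; this lies in $N$ by $\lambda$-closure, has $N$-cardinality $\lambda<j(\kappa)$, and is an elementary substructure of $(V^N_{j(\lambda)},\in,j(\kappa))$ because $j\res V_\lambda\colon V_\lambda\to V^N_{j(\lambda)}$ is elementary. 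Moreover $\mathsf M$ is closed under $j(F)$ since $j(F)(\{j(x_1),\ldots,j(x_n)\})=j(F(\{x_1,\ldots,x_n\}))\in\mathsf M$.

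Next I verify that $\mathsf M$ meets the $j(\kappa)$-Magidor clauses as computed in $N$. Let $\pi$ be the Mostowski collapse map on $\mathsf M$. Since $j\res V_\lambda$ is an $\in$-isomorphism onto $\mathsf M$ and $V_\lambda^N=V_\lambda$, one obtains $\bar{\mathsf M}=V_\lambda=V_{\bar\gamma}$ with $\bar\gamma:=\lambda<j(\kappa)$ and $\pi(j(\kappa))=\kappa$. The inaccessibility of $\lambda$ gives $\cf^N(\bar\gamma)=\lambda>\kappa=\pi(j(\kappa))$, while $j\res V_\kappa=\mathrm{id}$ gives $V_{\pi(j(\kappa))}=V_\kappa\subseteq \mathsf M$. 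For the standardness clause of $\mathcal U^{j(\kappa)}_{\mathrm{st}}$, the natural candidate is $\alpha:=\sup j[\lambda]$: then $\mathsf M\subseteq V_\alpha^N$, and a Tarski--Vaught chain argument, exploiting the elementarity of $j\res V_\lambda$ together with the cofinality of $\mathsf M\cap\mathrm{Ord}$ in $\alpha$, yields $V_\alpha^N\prec V_{j(\lambda)}^N$, placing $\alpha$ in $E^N$ and identifying $\mathsf M$ as a standard element of $\mathcal V_\alpha^N$.

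Consequently, $N$ satisfies the statement ``there exists a standard $j(\kappa)$-Magidor model in $\mathcal P_{j(\kappa)}(V_{j(\lambda)})$ closed under $j(F)$,'' witnessed by $j[V_\lambda]$. Pulling back through the elementarity of $j$ replaces $j(\kappa),j(\lambda),j(F)$ by $\kappa,\lambda,F$, producing the desired $\mathsf M$ in $V$. Since $F$ was arbitrary, $\mathcal U^\kappa_{\mathrm{st}}$ is stationary in $\mathcal P_\kappa(V_\lambda)$. The principal technical point is the verification that $j[V_\lambda]$ qualifies as standard in $N$ -- namely, that the Tarski--Vaught analysis indeed locates a suitable $\alpha\in E^N$ with $\hull(\mathsf M,V_\alpha^N)\in\mathcal A_\alpha^N$; all the remaining Magidor clauses are immediate consequences of the elementarity of $j$ and the fact that $j\res V_\kappa$ is the identity.
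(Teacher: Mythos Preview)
The paper does not actually prove this statement: it is recorded as a \emph{Fact} imported from \cite{MohammadpourVelickovic}, with no argument supplied. So there is no ``paper's own proof'' to compare against.

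Your argument is the standard one and is correct. A couple of points you leave slightly implicit are worth making explicit. First, to see that $\alpha:=\sup j[\lambda]$ lies in $E^N$, note that since $\lambda$ is inaccessible the set $E=\{\beta<\lambda\mid V_\beta\prec V_\lambda\}$ is a club in $\lambda$; hence $j(E)$ is a club in $j(\lambda)$ in $N$, and $\alpha=\sup j[E]$ is a limit point of $j(E)\subseteq E^N$ below $j(\lambda)$, so $\alpha\in E^N$ by closedness. Second, once you know $\alpha\in E^N$ you get $V_\alpha^N\prec V_{j(\lambda)}^N$; combined with $\mathsf M=j[V_\lambda]\prec V_{j(\lambda)}^N$ and $\mathsf M\subseteq V_\alpha^N$, the general fact ``$A\prec C$, $B\prec C$, $A\subseteq B$ $\Rightarrow$ $A\prec B$'' gives $\mathsf M\prec V_\alpha^N$ directly, so no delicate Tarski--Vaught chain analysis is needed. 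With these two observations spelled out, the hedging in your final paragraph can be removed: the standardness of $\mathsf M$ in $N$ is fully established.
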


\begin{defn}
Suppose $\mathsf{M},\mathsf{N}\in\mathcal V$ and $\alpha\in E$.
A map $\sigma:\mathsf{M}\rightarrow \mathsf{N}$ is an \emph{$\alpha$-isomorphism} if it has an extension $\bar{\sigma}:\hull(\mathsf{M},V_{\alpha})\rightarrow \hull(\mathsf{N},V_{\alpha})$ which is isomorphism.
In this case, we write $\mathsf{M}\cong_{\alpha} \mathsf{N}$ and say $\mathsf{M}$ is $\alpha$-isomorphic to $\mathsf{N}$.
\end{defn}

\begin{defn}
Suppose $\alpha,\beta\in E$ and $\mathsf{M}$ is a $\beta$-model. Let $\overline{\hull(\mathsf{M},V_{\alpha})}$ be the transitive collapse of $\hull(\mathsf{M},V_{\alpha})$, and let $\pi$ be the collapsing map. We define $\mathsf{M}\res\alpha$ to be $\pi[\mathsf{M}]$.
\end{defn}
\begin{remark} For $\alpha,\beta\in E$, and $\beta$-models $\mathsf{M},\mathsf{N}$:
\begin{itemize}
\item $\mathsf{M}\res\alpha=\mathsf{N}\res\alpha$ iff $\mathsf{M}\cong_{\alpha} \mathsf{N}$.
\item If $\beta<\alpha$, then $\mathsf{M}=\mathsf{M}\res\alpha$.
\item If $\beta\geq\alpha$, then $\mathsf{M}\res\alpha\cong_{\alpha} \mathsf{M}$.
\item If $\alpha\leq\beta$ , then $(\mathsf{M}\res\beta)\res\alpha=\mathsf{M}\res\alpha$.
\item If $\alpha,\beta\ge\kappa$, then $\delta_{\mathsf{M}\restriction\alpha}=\delta_{\mathsf{M}\restriction\beta}$.
\end{itemize}
\end{remark}

We define a version of the membership relation for $\alpha\in E$.
\begin{defn}
Suppose $\mathsf{M},\mathsf{N}\in\mathcal V$ and $\alpha\in E$. We write $\mathsf{M}\in_{\alpha} \mathsf{N}$ and say that $\mathsf{M}$ is $\alpha$-belongs to $\mathsf{N}$,
if there exists $\mathsf{M}'\cong_{\alpha} \mathsf{M}$ such that $\mathsf{M}'\in \mathsf{N}$.
\end{defn}

\begin{defn}[active points]
Let $\mathsf M\in\mathcal V$ and $\alpha\in E$. We say that $\mathsf M$ is \emph{active at $\alpha$} if $\eta(M)\geq\alpha$ and $H(M, V_{\delta_M})\cap E\cap\alpha$ is unbounded in $E\cap\alpha$.
We say that such $M$ is \emph{strongly active} if $M\cap E\cap\alpha$ is unbounded in $E\cap\alpha$.
\end{defn}

\begin{remark}
For $\mathsf N\in\mathcal U^{\kappa}$ for any $\alpha\in E$, $\mathsf N$ is active at $\alpha$ if and only if $\mathsf N$ is strongly active at $\alpha$.
\end{remark}

\begin{fact}
Given $\mathsf M,\mathsf N\in\mathcal V$ and $\alpha\in E$. If $\mathsf M\cong_{\alpha}\mathsf N$, then $\mathsf M, \mathsf N$ have the same active points up to $\alpha$.
\end{fact}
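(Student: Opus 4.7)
The plan is to verify that each of the two clauses comprising ``$\mathsf M$ is active at $\beta$'' transfers under $\alpha$-isomorphism. Using the fourth bullet of the remark on $\res$, from $\mathsf M\res\alpha=\mathsf N\res\alpha$ we get $\mathsf M\res\beta=\mathsf N\res\beta$ for every $\beta\leq\alpha$, that is $\mathsf M\cong_\beta\mathsf N$. So it suffices to prove the fact with $\beta$ in place of $\alpha$, and by relabeling it is enough to handle the case $\beta=\alpha$.

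Fix an $\alpha$-isomorphism $\bar\sigma:\hull(\mathsf M,V_\alpha)\to\hull(\mathsf N,V_\alpha)$ extending $\sigma:\mathsf M\to\mathsf N$. Both hulls contain $V_\alpha$, and since $\bar\sigma$ preserves $\in$, its restriction to $V_\alpha$ is an automorphism of $(V_\alpha,\in)$; as this structure is rigid, $\bar\sigma\res V_\alpha=\operatorname{id}_{V_\alpha}$. In particular, $\bar\sigma$ fixes $\kappa$ and every ordinal below $\alpha$ pointwise, and $\mathsf M\cap V_\alpha=\sigma[\mathsf M\cap V_\alpha]=\mathsf N\cap V_\alpha$, yielding $\delta_{\mathsf M}=\delta_{\mathsf N}=:\delta$.

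For the first active-ness clause, I use that $\eta(\mathsf M\res\alpha)=\alpha$ when $\eta(\mathsf M)\geq\alpha$, and $\eta(\mathsf M\res\alpha)=\eta(\mathsf M)$ otherwise (the latter being the second bullet of the remark), and analogously for $\mathsf N$; since $\mathsf M\res\alpha=\mathsf N\res\alpha$ these values coincide, which by uniqueness of $\eta$ forces $\eta(\mathsf M)\geq\alpha$ iff $\eta(\mathsf N)\geq\alpha$. For the second clause, $V_\delta\s V_\alpha$, hence $\bar\sigma$ fixes $V_\delta$ pointwise; the direct computation $\bar\sigma(f(x))=\sigma(f)(x)$ for $f\in\mathsf M$ and $x\in[V_\delta]^{<\omega}$ then gives $\bar\sigma[\hull(\mathsf M,V_\delta)]=\hull(\mathsf N,V_\delta)$. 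Because $\bar\sigma$ fixes the ordinals below $\alpha$, we conclude $\hull(\mathsf M,V_\delta)\cap\alpha=\hull(\mathsf N,V_\delta)\cap\alpha$, and intersecting with $E$ shows the unboundedness clause holds for $\mathsf M$ iff it holds for $\mathsf N$.

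The only delicate moment is the rigidity identification $\bar\sigma\res V_\alpha=\operatorname{id}$, which depends on reading the definition of $\mathcal V_\alpha$ via $\mathcal A_\alpha$ as the assertion that $V_\alpha$ sits transitively inside each hull; once this is granted, the remainder of the proof is a bookkeeping exercise with the definitions and the remark on $\res$.
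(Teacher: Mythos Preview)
The paper does not prove this statement: it is recorded as a Fact quoted from Mohammadpour--Veli\v{c}kovi\'{c}, with no argument supplied. So there is nothing in the paper to compare your proposal against, and your write-up stands or falls on its own.

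Your argument is essentially correct, and the reduction to a single level via the fourth bullet of the remark, the handling of the $\eta$-clause via $\eta(\mathsf M\res\alpha)$, and the transport of $\hull(\mathsf M,V_\delta)$ to $\hull(\mathsf N,V_\delta)$ are all fine. The one soft spot is exactly the step you flag as delicate: you assert that $\bar\sigma\res V_\alpha$ is an \emph{automorphism} of $(V_\alpha,\in)$, but you have not explained why $\bar\sigma[V_\alpha]\subseteq V_\alpha$; knowing that $V_\alpha$ sits transitively in both hulls does not by itself force $\bar\sigma$ to map $V_\alpha$ into $V_\alpha$. The cleanest fix avoids this issue entirely. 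Let $\pi_{\mathsf M},\pi_{\mathsf N}$ be the Mostowski collapses of $\hull(\mathsf M,V_\alpha),\hull(\mathsf N,V_\alpha)$. Since $\bar\sigma$ is an $\in$-isomorphism, both hulls have the same transitive collapse, and by uniqueness of the Mostowski collapse $\pi_{\mathsf N}\circ\bar\sigma=\pi_{\mathsf M}$, i.e.\ $\bar\sigma=\pi_{\mathsf N}^{-1}\circ\pi_{\mathsf M}$. Because $V_\alpha$ is a transitive subset of each hull, an $\in$-induction gives $\pi_{\mathsf M}\res V_\alpha=\pi_{\mathsf N}\res V_\alpha=\operatorname{id}$, whence $\bar\sigma\res V_\alpha=\operatorname{id}$ directly, without the detour through rigidity. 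With this adjustment the remainder of your proof goes through as written.
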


This fact gives a rise to the following definition.

\begin{defn}
Given $\mathsf M\in\mathcal V$. Let $a(\mathsf M):=\{\alpha\in E\mid\mathsf M\text{ is active at }\alpha\}$.
Let $\alpha(\mathsf M):=\max(a(\mathsf M))$.
\end{defn}

\begin{remark}
Note that $a(M)$ is closed subset of $E$ of size at most $|H(M,V_{\delta_M})|$.
\end{remark}

\begin{defn}[Meet of models]
Suppose $\mathsf{N}\in\mathcal U^\kappa$ and $\mathsf{M}\in\mathcal B$. Let $\alpha:=\max(a(\mathsf{N})\cap a(b))$.
Assuming $\mathsf{N}\in_{\alpha} \mathsf{M}$, we define $\mathsf{N}\wedge \mathsf{M}$ as follows.
First, let $\bar{\mathsf{N}}$ be the transitive collapse of $\mathsf{N}$, and $\pi$ the collapsing map. Set,
$$\eta:=\sup(\sup(\pi^{-1}[\bar{\mathsf{N}}\cap \mathsf{M}]\cap\ord)\cap E\cap(\alpha+1)).$$
Then, the \emph{meet} of $\mathsf{N}$ and $\mathsf{M}$ is $\mathsf{N}\wedge \mathsf{M}:=\pi^{-1}[\bar{\mathsf{N}}\cap \mathsf{M}]\res\eta$
\end{defn}

\begin{lemma}
Let $\mathsf{N}\in\mathcal U^\kappa$ and $\mathsf{M}\in\mathcal B$. Suppose $\alpha\in E$ and the meet $\mathsf{N}\wedge \mathsf{M}$ is defined and is $\alpha$-active.
Then, $(\mathsf{N}\wedge \mathsf{M})\cap V_{\alpha}=\mathsf{N}\cap \mathsf{M}\cap V_{\alpha}$.
\end{lemma}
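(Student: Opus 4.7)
The plan is to reduce the claimed identity to two successive equalities and handle each in turn. Write $\pi:\mathsf{N}\to\bar{\mathsf{N}}$ for the collapsing map of $\mathsf{N}$, set $P:=\pi^{-1}[\bar{\mathsf{N}}\cap\mathsf{M}]$, $\alpha^{*}:=\max(a(\mathsf{N})\cap a(\mathsf{M}))$, and let $\eta$ be the ordinal used in the definition of the meet, so that $\mathsf{N}\wedge\mathsf{M}=P\res\eta$. The two intermediate equalities are $(\mathsf{N}\wedge\mathsf{M})\cap V_\alpha=P\cap V_\alpha$ and $P\cap V_\alpha=\mathsf{N}\cap\mathsf{M}\cap V_\alpha$.

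For the first equality, the $\alpha$-activity of $\mathsf{N}\wedge\mathsf{M}$ gives $\eta(\mathsf{N}\wedge\mathsf{M})\geq\alpha$; since $\mathsf{N}\wedge\mathsf{M}=P\res\eta$ lies in $\mathcal V_\eta$, this yields $\alpha\leq\eta$. Applying the fact characterizing $\hull$ as the least elementary submodel containing its parameters, we obtain $V_\alpha\s V_\eta\s\hull(P,V_\eta)$, and transitivity of $V_\alpha$ ensures the collapsing map $\pi_\eta$ used in forming $P\res\eta=\pi_\eta[P]$ is the identity on $V_\alpha$. Since Mostowski collapses preserve ranks on their transitive substructures, intersecting $\pi_\eta[P]$ with $V_\alpha$ recovers $P\cap V_\alpha$ exactly.

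For the second equality, unpack $P\cap V_\alpha=\{y\in\mathsf{N}\cap V_\alpha\mid\pi(y)\in\mathsf{M}\}$, and note that the claim amounts to the biconditional $y\in\mathsf{M}\iff\pi(y)\in\mathsf{M}$ for $y\in\mathsf{N}\cap V_\alpha$. When $y\in V_{\pi(\kappa)}$, the Magidor property $\pi\res V_{\pi(\kappa)}=\mathrm{id}$ makes this immediate. When $y$ has rank at least $\pi(\kappa)$, one exploits the prerequisite $\mathsf{N}\in_{\alpha^{*}}\mathsf{M}$ for the meet to be defined: pick $\mathsf{N}'\in\mathsf{M}$ with $\mathsf{N}'\cong_{\alpha^{*}}\mathsf{N}$. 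Then $\mathsf{N}$ and $\mathsf{N}'$ share the transitive collapse $\bar{\mathsf{N}}$, and the collapsing map $\pi'$ of $\mathsf{N}'$, being definable inside $\mathsf{M}$ from $\mathsf{N}'$, allows $\mathsf{M}$ to access the collapse data; moreover $\pi$ and $\pi'$ agree on the relevant common part via the $\alpha$-isomorphism $\mathsf{N}\cong_\alpha\mathsf{N}'$ (which follows from $\cong_{\alpha^{*}}$ and the restriction-commutation remark). Combining this with elementarity of $\mathsf{M}$ delivers the biconditional.

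The main obstacle is the case $\alpha>\pi(\kappa)$ in the second equality: namely, carefully justifying that $\mathsf{M}$ sees $\pi$ well enough to conclude $\pi(y)\in\mathsf{M}\iff y\in\mathsf{M}$ via the internal copy $\mathsf{N}'$. This is where one leans on the strong active character of the Magidor model $\mathsf{N}$ and on the $\alpha$-activity of the meet to ensure that $\mathsf{M}$ recognizes the relevant part of the collapse; the remainder is routine bookkeeping with the definitions of $\hull$, $\res$, $\cong_\alpha$, and $\in_\alpha$.
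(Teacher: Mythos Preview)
The paper does not supply its own proof of this lemma: it sits in Section~2, which the author explicitly declares to be a summary of material due to Mohammadpour and Veli\v{c}kovi\'{c}, and the statement is recorded without argument. So there is nothing in the paper to compare your proof against; one would have to consult \cite{MohammadpourVelickovic} for the original.

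That said, your strategy is the right one and can be completed. The first equality is fine: $\alpha$-activity of $\mathsf{N}\wedge\mathsf{M}$ gives $\alpha\le\eta$, and since $V_\alpha\subseteq V_\eta$ sits transitively inside $\hull(P,V_\eta)$ the collapse producing $P\res\eta$ fixes $V_\alpha$ pointwise, so $(P\res\eta)\cap V_\alpha=P\cap V_\alpha$ (injectivity of the collapse handles the reverse inclusion). For the second equality, your Case~1 is immediate. In Case~2 your sketch is correct but can be tightened, and the difficulty you flag is smaller than you suggest. From $\mathsf{N}\cong_{\alpha^*}\mathsf{N}'$ and $\alpha\le\alpha^*$ one gets $\mathsf{N}\cong_\alpha\mathsf{N}'$; the extended isomorphism $\bar\sigma$ fixes the transitive set $V_\alpha$, so $\sigma(y)=y$ for every $y\in\mathsf{N}\cap V_\alpha$. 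Since $\sigma:\mathsf{N}\to\mathsf{N}'$ is an $\in$-isomorphism, $\mathsf{N}$ and $\mathsf{N}'$ do share the same Mostowski collapse $\bar{\mathsf{N}}$, and uniqueness of the collapse gives $\pi=\pi'\circ\sigma$, hence $\pi(y)=\pi'(y)$ for such $y$. Now $\mathsf{N}'\in\mathsf{M}$, and the value $\pi'(y)$ is first-order definable from $\mathsf{N}'$ and $y$ (``the image of $y$ under the Mostowski collapse of $\mathsf{N}'$''); by elementarity of $\mathsf{M}$ in its hull, $y\in\mathsf{M}$ implies $\pi'(y)\in\mathsf{M}$ and conversely $(\pi')^{-1}(\pi'(y))=y\in\mathsf{M}$. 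No further appeal to strong activity is needed beyond what already secured $\alpha\le\eta\le\alpha^*$.
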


\begin{lemma}
Let $\alpha\in E$. Given $\mathsf N\in\mathcal U^{\kappa}$, $\mathsf M\in\mathcal B$ and the meet $\mathsf N\wedge\mathsf M$ is defined.
Then,
\begin{enumerate}
\item If both $\mathsf N,\mathsf M$ are strongly active at $\alpha$, then $\mathsf N\wedge\mathsf M$ is strongly active at $\alpha$.
\item $(\mathsf N\wedge\mathsf M)\rest\alpha=(\mathsf N\rest\alpha)\wedge (\mathsf M\rest\alpha)$.
\item For $\mathsf P\in\mathcal V$ active at $\alpha$, $\mathsf P\in_{\alpha}\mathsf N\wedge \mathsf M$ iff $\mathsf P\in_{\alpha}\mathsf N$ and $\mathsf P\in_{\alpha}\mathsf M$.
\end{enumerate}
\end{lemma}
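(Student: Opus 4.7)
My plan is to dispatch (3) first since it is essentially a corollary of the preceding lemma, then build up to (1), and handle (2) by unpacking definitions. For (3), fix $\mathsf P\in\mathcal V$ active at $\alpha$. The key observation is that ``$\mathsf P\in_\alpha \mathsf Q$'' is equivalent to ``$\mathsf P\res\alpha\in \mathsf Q$'', since $\mathsf P\res\alpha\cong_\alpha \mathsf P$ and $\mathsf P\res\alpha$ lies essentially inside $V_\alpha$. Assuming for the moment that $\mathsf N\wedge \mathsf M$ is itself active at $\alpha$ (which I'll establish as a byproduct of (1)), the preceding lemma gives $(\mathsf N\wedge \mathsf M)\cap V_\alpha=\mathsf N\cap \mathsf M\cap V_\alpha$. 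Substituting, $\mathsf P\in_\alpha \mathsf N\wedge \mathsf M$ iff $\mathsf P\res\alpha\in (\mathsf N\cap \mathsf M)\cap V_\alpha$, which is patently the conjunction of $\mathsf P\in_\alpha \mathsf N$ and $\mathsf P\in_\alpha \mathsf M$.

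For (1), set $\alpha^{*}:=\max(a(\mathsf N)\cap a(\mathsf M))$; since $\alpha\in a(\mathsf N)\cap a(\mathsf M)$ by hypothesis, $\alpha\le\alpha^{*}$. My strategy is to first show that $\mathsf N\cap \mathsf M\cap E\cap\alpha$ is unbounded in $E\cap\alpha$, which forces the $\eta$ in the definition of the meet to satisfy $\eta\ge\alpha$ (hence $\mathsf N\wedge \mathsf M$ is active at $\alpha$), and then invoke the preceding lemma to conclude $(\mathsf N\wedge \mathsf M)\cap E\cap\alpha=\mathsf N\cap \mathsf M\cap E\cap\alpha$, yielding strong activity. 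The crux is the unboundedness claim: fix $\beta_0\in E\cap\alpha$; by strong activity of $\mathsf M$ pick some $\beta\in \mathsf M\cap E\cap\alpha$ with $\beta>\beta_0$. Since $\mathsf N\in_{\alpha^{*}} \mathsf M$, there is $\mathsf N'\in \mathsf M$ with $\mathsf N'\cong_{\alpha^{*}} \mathsf N$, and by the earlier fact $\mathsf N'$ shares active points with $\mathsf N$ up to $\alpha^{*}$, so $\mathsf N'$ is itself a $\kappa$-Magidor model strongly active at $\alpha$. Hence the statement ``there exists $\gamma\in \mathsf N'\cap E$ with $\beta<\gamma<\alpha$'' holds, and as $\mathsf M$ is elementary and contains $\mathsf N'$, $\beta$, and $\alpha$, some witness $\gamma$ lies in $\mathsf M$. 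Finally, because $\gamma<\alpha$, $\gamma\in V_\alpha$, so the $\alpha$-isomorphism $\bar\sigma:\hull(\mathsf N,V_\alpha)\to\hull(\mathsf N',V_\alpha)$, being forced to fix the rigid substructure $V_\alpha$, identifies $\gamma\in \mathsf N'$ with $\gamma\in \mathsf N$; thus $\gamma\in \mathsf N\cap \mathsf M\cap E\cap\alpha$ and $\gamma>\beta_0$.

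For (2), one may assume $\alpha\le\alpha^{*}$, as otherwise both sides collapse trivially via the identity $\mathsf M=\mathsf M\res\alpha$ when $\eta(\mathsf M)<\alpha$. The plan is to unfold $\mathsf N\wedge \mathsf M=\pi^{-1}[\bar{\mathsf N}\cap \mathsf M]\res\eta$ on the left and the analogous expression for $(\mathsf N\res\alpha)\wedge (\mathsf M\res\alpha)$ on the right, and then observe three commutations: the transitive collapse of $\mathsf N$ commutes with the collapse of $\hull(\mathsf N,V_\alpha)$ defining $\res\alpha$; the set intersection $\bar{\mathsf N}\cap \mathsf M$ transports appropriately through these collapses because everything below $\alpha$ is fixed; and the $\eta$-sup computation is preserved since it only sees $E\cap(\alpha^{*}+1)$ and collapses correctly under $\res\alpha$. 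The main obstacle throughout is the argument in (1), specifically the transfer step relying on rigidity of $V_\alpha$ under $\bar\sigma$ to move $\gamma\in \mathsf N'$ to $\gamma\in \mathsf N$; in (2) the difficulty is purely bookkeeping, namely checking the three commutations above coherently.
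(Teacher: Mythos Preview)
The paper does not actually prove this lemma: Section~2 is an overview of preliminaries, and this result is quoted from \cite{MohammadpourVelickovic} without argument. So there is no ``paper's own proof'' to compare against, and your proposal should be read as an independent attempt.

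That said, your argument for (1) has a genuine gap. You write that ``$\mathsf M$ is elementary and contains $\mathsf N'$, $\beta$, and $\alpha$'', but strong activity of $\mathsf M$ at $\alpha$ only says $\mathsf M\cap E\cap\alpha$ is unbounded in $E\cap\alpha$; it does \emph{not} give $\alpha\in\mathsf M$. Without $\alpha$ as a parameter you cannot reflect the formula ``$\exists\gamma\in\mathsf N'\cap E\ (\beta<\gamma<\alpha)$'' into $\mathsf M$. The fix is easy: take $\gamma$ to be the \emph{least} element of $\mathsf N'\cap E$ strictly above $\beta$. This $\gamma$ is definable from $\mathsf N'$ and $\beta$ alone, hence lies in $\mathsf M$; and since $\mathsf N'$ is strongly active at $\alpha$ and $\beta<\alpha$, this least $\gamma$ is automatically below $\alpha$. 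The rest of your transfer from $\mathsf N'$ to $\mathsf N$ via $\mathsf N\cap V_{\alpha^{*}}=\mathsf N'\cap V_{\alpha^{*}}$ is fine.

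For (3), be aware that you are invoking (1) to get $\alpha$-activity of the meet, but (1) assumes \emph{strong} activity of both $\mathsf N$ and $\mathsf M$ at $\alpha$, which is not part of the hypothesis of (3). In practice the statement is only used when the meet is indeed $\alpha$-active, so this is more a matter of stating the hypotheses carefully than a real obstruction. Your sketch for (2) is only an outline; the bookkeeping is genuinely where the work lies, and you have not carried it out.
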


\begin{defn}
The following notation shall be comfortable to use is some cases:
\begin{notation}
Let $\alpha\in E$ and $\mathcal M$ be an $\in_{\alpha}$-chain. Let $\in^{*}_{\alpha}$ be the transitive closure of $\in_{\alpha}$. For $\mathsf M,\mathsf N\in\mathcal M$, we shall say that $\mathsf M$ is $'in_{\alpha}$-below $\mathsf N$ iff $\mathsf M\in^{*}_{\alpha}\mathsf N$.

Moreover, we let:
$$(\mathsf M,\mathsf N)^{\alpha}_{\mathcal M}:=\{\mathsf P\in\mathcal M\mid\mathsf M\in^*_{\alpha}\mathsf P\in^*_{\alpha}\mathsf N\}.$$

We define similarly $[\mathsf M,\mathsf N)^{\alpha}_{\mathcal M}$ and $[\mathsf M,\mathsf N]^{\alpha}_{\mathcal M}$.
\end{notation}

We are now ready to define the very base forcing.

Suppose $\alpha\in E$.
$\mathbb{M}^{\kappa}_\alpha$ stands for the collection of all $\mathcal M$ such that:
\begin{enumerate}
\item $\mathcal M$ is a finite subset of $\mathcal B_{\leq\alpha}\cup \mathcal U_{\leq\alpha}^\kappa$;
\item $\mathcal M^{\delta}:=\{\mathsf{M}\rest\delta\mid \mathsf{M}\in\mathcal M, \delta\in a(\mathsf M)\}$ is an $\in_{\delta}$-chain for all $\delta\in E\cap(\alpha+1)$.

\end{enumerate}
We say that $\mathcal N\leq\mathcal M$ if for all $\mathsf{M}\in\mathcal M$ there exists an $\mathsf{N}\in\mathcal N$ such that $\mathsf{N}\res\eta(\mathsf{M})=\mathsf{M}$. Set $\mathbb{M}^{\kappa}_{\lambda}:=\bigcup\mathbb{M}^{\kappa}_{\alpha}$.
\end{defn}

\begin{notation} We write $\mathcal M(\gamma)$ for $\mathcal M^{\min(E\setminus(\gamma+1))}$.
\end{notation}

Suppose $p\in\mathbb{M}^{\kappa}_{\lambda}$,
$\mathsf{M}\in\pi_0(p)$ a $\beta$-model for some $\beta\in E$.
We would like to define the \emph{residue of $p$ to $\mathsf{M}$},
denoted $p\res\mathsf{M}$, which is an element of $\mathsf{M}$.
For this, we shall scan over all $\delta\in a(\mathsf{M})$,
and all $\mathsf{N}\in_{\delta} \mathsf{M}$ such that $\delta\in a(\mathsf{N})$
in order to identify an $\mathsf{N'}\in\mathsf{M}$ that is $\alpha$-isomorphic to $\mathsf{N}$ for the largest possible $\alpha$.
It will be denoted $\mathsf N\res\mathsf M$. Finally, the residue will be
$$p\res\mathsf M:=\{ \mathsf N\res\mathsf M\mid \mathsf{N}\in p\text{ and }\mathsf N\res\mathsf M\text{ is defined}\}.$$

Let $\mathsf{N}\in\mathcal V_\beta$. If $\mathsf{N}\in_{\delta} \mathsf{M}$ and $\delta\in a(\mathsf{M})\cap a(\mathsf{N})$,
then we shall define $\mathsf N\res\mathsf M$ as follows.
Note that $\mathsf{N}\in_{\alpha} \mathsf{M}$ for $\alpha:=\alpha_{\mathsf{M},\mathsf{N}}$.
Also, note that, if $\mathsf{M}$ is standard $\beta$-model, then $\alpha<\beta$.
Now, consider $\mathsf{X}:=(X,\in,\kappa)$, for $X:=\hull(\mathsf{M},V_{\beta})$.

If $\alpha\in \mathsf{M}$, then we can compute $\mathsf{N}\rest\alpha$ in $\mathsf{M}$,
so we let $\mathsf{N}\res\mathsf{M}:=\mathsf{N}\rest\alpha$.
Otherwise, $\alpha\notin \mathsf{M}$, so we go for the second best option, namely $\alpha^*:=\min((\mathsf{M}\cap\ord)\setminus\alpha)$.
We know $\alpha^*\in E\cap \mathsf{M}$ and is of uncountable cofinality.
As $\mathsf{N}\in_{\alpha} \mathsf{M}$ there is a model $\mathsf{N}'\in \mathsf{M}$ which is $\alpha$-isomorphic to $\mathsf{N}$.
Now, $\mathsf{N}'\res(\alpha^*)\in \mathsf{M}$. Hence we may assume that $\mathsf{N}'\in(\mathcal V_{\alpha^*})^{\mathsf{X}}$.
Moreover, note that $\mathsf{N}'$ is unique.\footnote{Indeed, let $\mathsf{N}''\in \mathsf{M}$ with same property. Since $\alpha^*$ is the least ordinal above $\alpha$ and $\mathsf{N}''\cong_{\alpha} \mathsf{N}'$ we have that $\mathsf{N}''\cong_{\delta} \mathsf{N}'$ for all $\delta\in \mathsf{M}\cap\alpha^*\cap E_A$. Thus, $\mathsf{N}''\cong_{\alpha^*} \mathsf{N}'$. But, $\mathsf{N}'', \mathsf{N}'$ are $\alpha^*$-models. So, $\mathsf{N}''=\mathsf{N}'$. }
In this case, we let $\mathsf{N}\res\mathbf{M}$ be this $\mathsf{N}'$.
We also need to define the residue in the case $\mathsf M\in\pi_1(p)$.
In this case, for any $\mathsf N\in p$, let $\mathsf N\res\mathsf M:=\mathsf N\rest\alpha(\mathsf N,\mathsf M)$ if $\delta_{\mathsf N}<\delta_{\mathsf M}$, otherwise we leave it undefined.
The \emph{residue of $p$ with respect to $\mathsf M$} is set to be:
$$p\res\mathsf M:=\{\mathsf N\res\mathsf M\mid \mathsf{N}\in p\text{ and }\mathsf N\res\mathsf M\text{ is defined}\}.$$

Next the following Fact shall prove itself to be useful for our purposes:
\begin{fact}\label{fact 2.27}
Let $p\in\mathbb M^{\kappa}_{\lambda}$ and $\alpha\in E$.
Given $\mathsf M\in\pi_0(p)$,
for every $\mathsf K\in p^{\alpha}\setminus(p\res\mathsf M)^{\alpha}$,
there exists $\mathsf N\in\pi_1((p\res M)^{\alpha})$ such that, $\mathsf K\in((\mathsf N\wedge\mathsf M)\rest\alpha, \mathsf N)^{\alpha}_{p}$.
\end{fact}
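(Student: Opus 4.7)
The plan is to identify $\mathsf N$ as the $\in^{*}_{\alpha}$-least uncountable model in $(p\res\mathsf M)^\alpha$ lying strictly above $\mathsf K$ in the chain $p^\alpha$. Specifically, I would first verify that the finite set
\[\mathcal T:=\{\mathsf N'\in\pi_1(p)\mid \alpha\in a(\mathsf N'),\ \delta_{\mathsf M}<\delta_{\mathsf N'},\ \mathsf K\in^{*}_{\alpha}\mathsf N'\rest\alpha\}\]
is nonempty: the clause $\delta_{\mathsf M}<\delta_{\mathsf N'}$ ensures $\mathsf N'\res\mathsf M$ is defined, so that $\mathsf N'\rest\alpha\in\pi_1((p\res\mathsf M)^\alpha)$, while nonemptiness itself follows from the finite $\in_\alpha$-chain structure of $p^\alpha$ combined with the fact that uncountable Magidor models act as upper bounds in the chain. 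Indeed, since $\mathsf K\in p^\alpha\setminus(p\res\mathsf M)^\alpha$ and $\mathsf M$ is countable with $\alpha\in a(\mathsf M)$, some Magidor model of $p$ must $\in^{*}_{\alpha}$-bound $\mathsf K$ from above while satisfying the residue-defined criterion $\delta_{\mathsf M}<\delta_{\mathsf N'}$. I then pick $\mathsf N\in\mathcal T$ minimizing $\mathsf N\rest\alpha$ under $\in^{*}_{\alpha}$.

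Next, I would check the lower bound $(\mathsf N\wedge\mathsf M)\rest\alpha\in^{*}_{\alpha}\mathsf K$ by contradiction. Suppose $\mathsf K$ were $\in^{*}_{\alpha}$-below, or equal to, $(\mathsf N\wedge\mathsf M)\rest\alpha$. Combining the identity $(\mathsf N\wedge\mathsf M)\cap V_\alpha=\mathsf N\cap\mathsf M\cap V_\alpha$ with the commutation $(\mathsf N\wedge\mathsf M)\rest\alpha=(\mathsf N\rest\alpha)\wedge(\mathsf M\rest\alpha)$ from the preceding lemmas, I would trace through the residue definition to produce a model $\mathsf P\in p$ whose residue $\mathsf P\res\mathsf M$ restricts at $\alpha$ precisely to $\mathsf K$, contradicting $\mathsf K\notin(p\res\mathsf M)^\alpha$. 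This places $\mathsf K$ strictly between $(\mathsf N\wedge\mathsf M)\rest\alpha$ and $\mathsf N$ inside the chain $p^\alpha$, as required.

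The principal obstacle is the nonemptiness of $\mathcal T$ in the first step. This requires leveraging the finite $\in_\alpha$-chain closure of $p^\alpha$, the finiteness of $p$, and clause (3) of the meet lemma describing when $\mathsf P\in_\alpha\mathsf N\wedge\mathsf M$, so as to guarantee that at least one uncountable Magidor bound in $p$ above $\mathsf K$ actually survives the residue operation to the countable $\mathsf M$. Once this is in hand, the remainder of the argument is a bookkeeping exercise using the adjacent meet lemmas.
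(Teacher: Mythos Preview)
The paper states this as a \emph{Fact} without proof; it is quoted from the Veli\v{c}kovi\'{c}--Mohammadpour framework rather than established in the paper. So there is no in-paper argument to compare against, but your attempt contains a genuine error worth flagging.

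Your definition of $\mathcal T$ requires $\delta_{\mathsf M}<\delta_{\mathsf N'}$, and you claim this is what makes $\mathsf N'\res\mathsf M$ defined. This is backwards on two counts. First, the criterion ``$\delta_{\mathsf N'}<\delta_{\mathsf M}$ implies $\mathsf N'\res\mathsf M$ is defined'' belongs to the residue with respect to an \emph{uncountable} $\mathsf M\in\pi_1(p)$; here $\mathsf M$ is countable, and the relevant requirement is instead that $\mathsf N'\in_{\delta}\mathsf M$ for some $\delta\in a(\mathsf M)\cap a(\mathsf N')$. Second, even granting the right criterion, the inequality goes the other way: a Magidor $\mathsf N'$ with $\mathsf N'\in_{\delta}\mathsf M$ satisfies $\delta_{\mathsf N'}<\delta_{\mathsf M}$, not $\delta_{\mathsf M}<\delta_{\mathsf N'}$. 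Consequently the elements of your $\mathcal T$ are precisely the Magidor models for which $\mathsf N'\res\mathsf M$ is \emph{undefined}, so they never land in $\pi_1((p\res\mathsf M)^\alpha)$ and the conclusion of the Fact cannot be met by any such $\mathsf N$. Relatedly, your nonemptiness argument (``Magidor models act as upper bounds in the chain'') is not correct: nothing prevents $\pi_1(p)$ from being empty, and even when it is not, Magidor models in $p^\alpha$ need not sit above a given $\mathsf K$.

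The intended picture is the opposite one. The witness $\mathsf N$ is the $\in^{*}_{\alpha}$-least uncountable model of $(p\res\mathsf M)^{\alpha}$ lying $\in^{*}_{\alpha}$-\emph{above} $\mathsf K$ \emph{and} $\in^{*}_{\alpha}$-\emph{below} $\mathsf M\rest\alpha$; such an $\mathsf N$ exists precisely because $\mathsf K$ fails to survive the residue, which can only happen when $\mathsf K$ is trapped in an interval $((\mathsf N\wedge\mathsf M)\rest\alpha,\mathsf N)^{\alpha}_{p}$ for some Magidor $\mathsf N\in_{\alpha}\mathsf M$. Once you reorient the search to Magidor models $\in_{\alpha}$-inside $\mathsf M$, your second paragraph (the lower-bound verification via the meet identities) is on the right track.
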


Our next step is adding decorations. In order to establish that, we will need the following definition:
\begin{defn}
Suppose $\mathcal M\in\mathbb M^{\kappa}_{\lambda}$. Let $\mathcal L(\mathcal M):=\{\mathsf M\rest\alpha\mid\mathsf M\in\mathcal M\text{ and }\alpha\in a(\mathsf M)\}$.

In addition, we say that a structure $\mathsf M\in\mathcal L(\mathcal M)$ is \text{$\mathcal M$-free}
if for every $\mathsf N\in\mathcal M$ if $\mathsf M\in_{\eta(\mathsf M)}\mathsf N$, then $\mathsf N$ is strongly active at $\eta(\mathsf M)$.

Denote by $\mathcal F(\mathcal M)$ the set of all $\mathcal M$-free structures.
\end{defn}

\begin{defn}
Suppose $\alpha\in E\cup\{\lambda\}$.
$\mathbb{P}^{\kappa}_{\alpha}$ denotes the collection of all pairs
$p:=(\mathcal M_p, d_p)$, where $\mathcal M_p\in\mathbb{M}^{\kappa}_\alpha$, $d_p$ is a finite partial function from $\mathcal F(\mathcal M_p)$ to $[V_{\kappa}]^{<\omega}$ and
\begin{equation}\tag{$\star$}
\text{if } \mathsf{M}\in \dom(d_p),\, \mathsf{N}\in\mathcal M_p\text{ and } \mathsf{M}\in_{\eta(\mathsf{M})} \mathsf{N},\text{ then }d_p(\mathsf{M})\in \mathsf{N}.
\end{equation}
We say $q\leq p$ iff $\mathcal M_q\leq \mathcal M_p$ and for every $\mathsf{M}\in\dom(d_p)$, there is $\beta\in E\cap(\eta(\mathsf{M})+1)$ such that, $\mathsf{M}\res\beta\in\dom(d_p)$ and $d_p(\mathsf{M}\res\beta)\s d_q(\mathsf{M}\res\beta)$.
\end{defn}

The notion of residue for $\mathbb{M}^{\kappa}_{\alpha}$ extends naturally to $\mathbb{P}^{\kappa}_{\alpha}$, as follows.
\begin{defn}[]\label{d219}
Suppose $p\in\mathbb{P}^{\kappa}_{\lambda}$, $\mathsf{M}\in\pi_0(\mathcal M_p)$ a $\beta$-model for some $\beta\in E$.
Then $p\res \mathsf{M}:=\langle\mathcal M_{p\res \mathsf{M}},d_{p\res \mathsf{M}})$ is defined via:
\begin{enumerate}
\item $\mathcal M_{p\res \mathsf{M}}:=\{\mathsf{N}\res \mathsf{M}\mid \mathsf{N}\in\mathcal M_p\}$.
\item $\dom(d_{p\res \mathsf{M}}):=\{\mathsf{N}\res \mathsf{M}\mid \mathsf{N}\in\dom(d_p)\text{ and } \mathsf{N}\in_{\eta(\mathsf{N})} \mathsf{M}\}$.
\item If $\mathsf{N}\in\dom(d_p)$ and $\mathsf{N}\in_{\eta(\mathsf{N})} \mathsf{M}$, let $d_{p\res \mathsf{M}}(\mathsf{N}\res \mathsf{M})=d_p(\mathsf{N})$.

\end{enumerate}
\end{defn}

And for the second type residue we have the following:
\begin{defn}[]\label{d220}
Suppose $p\in\mathbb{P}^{\kappa}_{\lambda}$, $\mathsf{M}\in\pi_1(\mathcal M_p)$ a $\beta$-model for some $\beta\in E$.
Then $p\res \mathsf{M}:=\langle\mathcal M_{p\res \mathsf{M}},d_{p\res \mathsf{M}})$ is defined via:
\begin{enumerate}
\item $\mathcal M_{p\res \mathsf{M}}:=\{\mathsf{N}\res \mathsf{M}\mid \mathsf{N}\in\mathcal M_p\}$.
\item $d_{p\res\mathsf M}:=d_p\res(\dom(d_p)\cap\mathsf M)$.
\end{enumerate}
\end{defn}

\begin{fact}\label{fact221}
Suppose $p\in\mathbb{P}^{\kappa}_{\lambda}$, $\mathsf{M}\in\mathcal M_p$ a $\beta$-model for some $\beta\in E$.
Then $p\res \mathsf{M}$ is a legitimate condition extended by $p$.
\end{fact}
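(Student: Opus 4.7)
The verification naturally splits along whether $\mathsf M$ is countable or uncountable, with the countable case being the more delicate one. In each case my plan is to check three things in turn: (i) $\mathcal M_{p\res\mathsf M}\in\mathbb M^\kappa_\lambda$, (ii) $d_{p\res\mathsf M}$ is a finite partial function from $\mathcal F(\mathcal M_{p\res\mathsf M})$ to $[V_\kappa]^{<\omega}$ satisfying the clause $(\star)$, and (iii) $p\leq p\res\mathsf M$.

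For (i), finiteness, preservation of countability for residues of countable models, and preservation of the $\kappa$-Magidor property under residue are all recorded earlier in the section. The substantive point is the $\in_\delta$-chain requirement for every $\delta\in E\cap(\lambda+1)$. I would analyze $(\mathcal M_{p\res\mathsf M})^\delta$ by splitting each $\mathsf N\in\mathcal M_p$ into two cases according to the definition of $\mathsf N\res\mathsf M$: either $\alpha_{\mathsf M,\mathsf N}\in\mathsf M$ and $\mathsf N\res\mathsf M=\mathsf N\rest\alpha_{\mathsf M,\mathsf N}$, or an $\alpha_{\mathsf M,\mathsf N}$-isomorphic representative from $\mathsf M$ is chosen at level $\alpha^*=\min((\mathsf M\cap\ord)\setminus\alpha_{\mathsf M,\mathsf N})$. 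Using the composition rule $(\mathsf N\res\beta)\res\alpha=\mathsf N\res\alpha$, applying $\res\delta$ to both outcomes reduces to a single further restriction of $\mathsf N$, so the chain property of $\mathcal M_p^\delta$ pushes forward along the residue map. The uniqueness footnoted in the construction of $\mathsf N\res\mathsf M$ rules out the dangerous scenario in which distinct original models collapse under residue and become $\in_\delta$-incomparable.

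For (ii) in the countable case, well-definedness of the prescription $d_{p\res\mathsf M}(\mathsf N\res\mathsf M):=d_p(\mathsf N)$ rests on the fact that $\mathsf N\res\mathsf M$ determines the $\alpha_{\mathsf M,\mathsf N}$-isomorphism type of $\mathsf N$, and $d_p(\mathsf N)\in V_\kappa\cap\mathsf M$ depends only on that type in the presence of $(\star)$ for $p$ and of $\mathsf N\in_{\eta(\mathsf N)}\mathsf M$. To verify $\mathcal M_{p\res\mathsf M}$-freeness of each $\mathsf N\res\mathsf M\in\dom(d_{p\res\mathsf M})$, suppose $\mathsf N\res\mathsf M\in_{\eta(\mathsf N\res\mathsf M)}\mathsf K\res\mathsf M$ for some $\mathsf K\in\mathcal M_p$; I would lift this to $\mathsf N\in_{\eta(\mathsf N)}\mathsf K$ (using that $\mathsf M$ itself is strongly active at the relevant point and sees a representative of $\mathsf K$), invoke $\mathcal M_p$-freeness of $\mathsf N$ in $p$ to get that $\mathsf K$ is strongly active there, and then transport this to $\mathsf K\res\mathsf M$ using Fact on active points of $\alpha$-isomorphic models. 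The clause $(\star)$ for $p\res\mathsf M$ is checked in the same fashion: lift, apply $(\star)$ for $p$, and pull back the resulting $\in$-relation through the residue, noting again that $d_p(\mathsf N)\in V_\kappa$ lies in $\mathsf K\res\mathsf M$ because it lies in every $\alpha^*$-submodel of $\mathsf M$ containing the isomorphic representative of $\mathsf K$. In the uncountable case, $d_{p\res\mathsf M}=d_p\res\mathsf M$ is literally a restriction of the partial function, so both freeness and $(\star)$ are inherited directly from $p$.

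For (iii), the relation $\mathcal M_p\leq\mathcal M_{p\res\mathsf M}$ is immediate: for each $\mathsf N\res\mathsf M\in\mathcal M_{p\res\mathsf M}$, the pre-image $\mathsf N\in\mathcal M_p$ witnesses the requirement via $\mathsf N\res\eta(\mathsf N\res\mathsf M)=\mathsf N\res\mathsf M$. The decoration clause is witnessed in the countable case by taking $\gamma:=\eta(\mathsf N)$, so $\mathsf N\res\gamma=\mathsf N\in\dom(d_p)$ and $d_{p\res\mathsf M}(\mathsf N\res\mathsf M)=d_p(\mathsf N)\subseteq d_p(\mathsf N)$, and in the uncountable case by the trivial inclusion. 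The main obstacle I anticipate is the $\in_\delta$-chain verification in step (i) combined with the freeness verification in step (ii); both hinge on controlling the interaction between the residue map and the active-point structure of the models, and both are ultimately settled by the uniqueness of $\alpha^*$-representatives inside $\mathsf M$ together with the composition identities for $\res$.
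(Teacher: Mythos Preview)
The paper does not actually prove this statement: it is recorded as a \emph{Fact} in Section~\ref{sec2}, whose entire content is explicitly attributed to Mohammadpour--Veli\v{c}kovi\'{c} \cite{MohammadpourVelickovic}, so there is no in-paper proof to compare your proposal against.

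Your outline is broadly the right shape for how the verification goes in that framework, but one step in part~(iii) is not quite right as written. In the case $\alpha_{\mathsf M,\mathsf N}\notin\mathsf M$, the residue $\mathsf N\res\mathsf M$ is the unique $\alpha^*$-model $\mathsf N'\in\mathsf M$ (relative to $\mathsf X=\hull(\mathsf M,V_\beta)$) that is $\alpha_{\mathsf M,\mathsf N}$-isomorphic to $\mathsf N$; here $\alpha_{\mathsf M,\mathsf N}<\alpha^*$, so $\mathsf N\rest\alpha^*$ and $\mathsf N'$ are only $\alpha_{\mathsf M,\mathsf N}$-isomorphic, not in general equal. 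Hence your claim ``$\mathsf N\res\eta(\mathsf N\res\mathsf M)=\mathsf N\res\mathsf M$'' need not hold literally, and the witness for $\mathcal M_p\leq\mathcal M_{p\res\mathsf M}$ is not simply the preimage $\mathsf N$. The actual argument in \cite{MohammadpourVelickovic} handles this via the structure of $\in_\delta$-chains and the uniqueness of representatives inside $\mathsf M$, so the gap is repairable, but the sentence as stated is not correct.
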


\begin{conv}
For a forcing notion $\mathbb P$ for two compatible $p,q\in\mathbb P$ we denote $p\wedge q$ to be the weakest condition among all conditions $r$ such that $r\leq p,q$.
\end{conv}

We shall finish this section by recalling that the forcing notion $\mathbb P^{\kappa}_{\lambda}$ is $\mathcal B_{st}$ and $\mathcal U^{\kappa}_{st}$ strongly proper.
\begin{fact}\label{fact234}
For $p\in\mathbb P^{\kappa}_{\lambda}$ suppose $\mathsf M\in\mathcal B_{st}\cup\mathcal U^{\kappa}_{st}$ is such that $\mathsf M\in\mathcal M_p$.
Then, for any condition $q\in\mathbb P^{\kappa}_{\lambda}\cap\mathsf M$ extending $p\res\mathsf M$, $p\wedge q$ exists.
\end{fact}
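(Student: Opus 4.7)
The plan is to exhibit $r:=(\mathcal M_r,d_r)$ with $\mathcal M_r:=\mathcal M_p\cup\mathcal M_q$ and $d_r:=d_p\cup d_q$ as the sought $p\wedge q$. Four tasks must be discharged: (i) $\mathcal M_r\in\mathbb M^\kappa_\lambda$; (ii) $d_r$ is a well-defined function on $\mathcal F(\mathcal M_r)$ satisfying the decoration clause $(\star)$; (iii) $r\le p$ and $r\le q$; and (iv) any $r'\in\mathbb P^\kappa_\lambda$ extending both $p$ and $q$ must also extend $r$. I will work in two parallel cases, $\mathsf M\in\mathcal B_{st}$ and $\mathsf M\in\mathcal U^\kappa_{st}$; the strong properness of the underlying poset $\mathbb M^\kappa_\lambda$ with respect to either type of standard model, established in \cite{MohammadpourVelickovic}, takes care of (i), so the role played here is to thread the decorations through that skeleton.

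The main obstacle is (i). At each $\delta\in E$, one must confirm that $(\mathcal M_r)^\delta=(\mathcal M_p)^\delta\cup(\mathcal M_q)^\delta$ is an $\in_\delta$-chain. Every $\mathsf L\in\mathcal M_q$ lies inside $\mathsf M$, hence $\mathsf L\in_\delta\mathsf M$ whenever both are active at $\delta$, while the models of $\mathcal M_p$ that are $\in_\delta$-above $\mathsf M\rest\delta$ are untouched. The delicate case is the models $\mathsf K\in (\mathcal M_p)^\delta\setminus(p\res\mathsf M)^\delta$: Fact~\ref{fact 2.27} (when $\mathsf M\in\mathcal B_{st}$), together with its Magidor counterpart from \cite{MohammadpourVelickovic} (when $\mathsf M\in\mathcal U^\kappa_{st}$), sandwiches each such $\mathsf K$ inside an interval $((\mathsf N\wedge\mathsf M)\rest\delta,\mathsf N)^\delta_{\mathcal M_p}$ with $\mathsf N\in\pi_1((p\res\mathsf M)^\delta)$. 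Because $q\le p\res\mathsf M$, every new model contributed by $\mathcal M_q$ at level $\delta$ lies, up to residue, strictly $\in_\delta$-between $(\mathsf N\wedge\mathsf M)\rest\delta$ and $\mathsf N$, so the sandwich is preserved and the two chains $(\mathcal M_p)^\delta,(\mathcal M_q)^\delta$ splice together.

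For (ii), the point is that $d_p$ and $d_q$ agree on their common domain: the relation $q\le p\res\mathsf M$ forces $d_q$ to refine $d_{p\res\mathsf M}$ in the residue sense built into the definition of $\le$, and by Definitions~\ref{d219} and~\ref{d220} the values of $d_{p\res\mathsf M}$ record precisely the values of $d_p$ on the relevant preimages. Hence $d_r$ is well-defined, and freeness passes to $\mathcal M_r$ since every new model in $\mathcal M_q$ sits inside the standard model $\mathsf M$ and is therefore strongly active at the heights that matter. The only non-routine instance of $(\star)$ to check is $\mathsf M'\in\dom(d_p)$ with $\mathsf N'\in\mathcal M_q$ and $\mathsf M'\in_{\eta(\mathsf M')}\mathsf N'$: from $\mathsf N'\in\mathsf M$ one deduces $\mathsf M'\in_{\eta(\mathsf M')}\mathsf M$, so $(\star)$ for $p$ yields $d_p(\mathsf M')\in\mathsf M$; tracing the residue $\mathsf M'\res\mathsf M$, which sits in $\dom(d_q)$ by the refinement condition, and applying $(\star)$ for $q$ to the pair $(\mathsf M'\res\mathsf M,\mathsf N')$ then delivers $d_p(\mathsf M')\in\mathsf N'$. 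The symmetric mixed case is handled identically.

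Clauses (iii) and (iv) are routine: (iii) holds by construction, and (iv) follows because any common extension $r'$ of $p$ and $q$ must already contain every model of $\mathcal M_p\cup\mathcal M_q$ up to residue and must carry decorations refining both $d_p$ and $d_q$. In sum, the heart of the argument is the interleaving step in (i), which essentially reduces to the $\mathbb M^\kappa_\lambda$-strong-properness argument of \cite{MohammadpourVelickovic} after the case split on the type of $\mathsf M$; the only genuinely new content here is the bookkeeping of the decorations in (ii), and it is precisely the mixed case of $(\star)$ there that I expect to need the most careful writing.
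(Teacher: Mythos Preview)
The paper does not prove Fact~\ref{fact234}: it is stated without proof as a result imported from \cite{MohammadpourVelickovic} (indeed the entire Section~\ref{sec2} is prefaced by ``All the content of this section is due to Velickovic and Mohammadpour''). So there is no in-paper proof to compare your proposal against; you are supplying an argument that the author deliberately outsourced.

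As for the sketch itself, it points to the right ingredients---the strong properness of $\mathbb M^\kappa_\lambda$ from \cite{MohammadpourVelickovic} for (i), and the residue machinery for the decorations in (ii)---but one detail needs correction. Your amalgam $\mathcal M_r:=\mathcal M_p\cup\mathcal M_q$ is not in general what $\mathcal M_p\wedge\mathcal M_q$ is in this setting: the weakest common extension requires closing under meets of the new Magidor models from $\mathcal M_q$ against the countable models of $\mathcal M_p$ sitting outside $\mathsf M$. This is visible already in the statement of Fact~\ref{Timer fact}, which speaks of elements $\mathsf P\in(\mathcal M_p\wedge\mathcal M_q)(\gamma)\setminus(\mathcal M_p(\gamma)\cup\mathcal M_q(\gamma))$; such $\mathsf P$ would not exist under your definition. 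The interleaving argument you describe for (i) is essentially what \cite{MohammadpourVelickovic} does, but it must be carried out for the meet-closed $\mathcal M_r$, and the decoration clause $(\star)$ must then also be checked for these newly created meet models. Likewise, $d_r:=d_p\cup d_q$ need not literally be a function: the order relation only guarantees $d_{p\res\mathsf M}(\mathsf N)\subseteq d_q(\mathsf N)$ (containment, not equality), so the correct $d_r$ takes the larger of the two values on the overlap rather than the set-theoretic union of the graphs.
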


\section{The forcing poset}\label{sec3}

Hereafter, we assume that $\kappa$ is a supercompact cardinal and that $\lambda$ is an inaccessible cardinal above it.
We first introduce for each $\alpha\in E$ the definition of the set of pre-conditions:
\begin{defn}
For $\alpha\in E$, let $\text{pre}\mathbb O_{\alpha}$ be the set of triples $p:=\langle\mathcal M_p,d_p, F_p\rangle$ where:
\begin{enumerate}
\item[(a)] $\langle\mathcal M_p,d_p\rangle\in\mathbb P^{\kappa}_{\alpha}$;
\item[(b)] $F_p$ is a finite function from $E$ to $H_{\alpha}$ such that:
\begin{enumerate}
\item[(1)] If $F_p\neq\emptyset$, then $\min(E)\in F_p$;
\item[(2)] For every $\gamma\in\dom(F_p)$, $F_p(\gamma)$ is a pair $(B^\gamma_p,\langle\tau^{\gamma}_{\delta,p}\mid\delta\in D^{\gamma}_p\rangle)$.
\item[(3)] For every $\mathsf{M}\in\mathcal M_p$, $\gamma\in\dom(F_p)\cap\mathsf{M}$ and $\delta\in\Delta_1(D^{\gamma}_p)\cap\mathsf{M}$ we have that $\tau^{\gamma}_{\delta,p}\in\mathsf{M}$.

\end{enumerate}
\end{enumerate}

We assert that $p\leq_{\text{pre}\mathbb O_\alpha} q$ if all of the following holds:
\begin{itemize}
\item $\langle\mathcal M_p,d_p\rangle\leq_{\mathbb P^{\kappa}_{\alpha}}\langle\mathcal M_q,d_q\rangle$;
\item $\dom(F_p)\supseteq\dom(F_q)$;
\item For all $\gamma\in\dom(F_q)$, $B^\gamma_p\supseteq B^\gamma_q$ and $D^{\gamma}_p\supseteq D^{\gamma}_q$;
\item For all $\gamma\in\dom(F_q)$ for every $\delta\in D^{\gamma}_q$, $\tau^{\gamma}_{\delta,q}=\tau^{\gamma}_{\delta,p}$.
\end{itemize}

Note that, for $\alpha<\beta$ in $E$ we have that $\text{pre}\mathbb O_{\alpha}\s\text{pre}\mathbb O_{\beta}$.
Moreover, for a given $p\in\text{pre}\mathbb O_{\beta}$ and $\alpha<\beta$ in $\dom(F_p)$ we may define the projection map, mapping $p$ to an element of $\text{pre}\mathbb O_{\alpha}$ via:
$$p\res\alpha:=\langle\mathcal M_p\res\alpha,d^{\alpha}_p,F_p\cap H_{\alpha}\rangle.$$
\end{defn}

We are now ready to define our iteration.
But first we will need the following assumption:

\begin{conv}
From now on, we denote $\xi_0:=\min(E)$.
We also assume all models $\mathsf{M}\in\mathcal B_{st}\cup\mathcal U^{\kappa}_{st}$
have $\xi_0$ as an element.

\end{conv}

Having defined preconditions, we are now ready to define
our `iteration' $\langle \mathbb O_{\alpha}\mid\alpha\in E\rangle$ to kill all possible candidates for $\mho(S^2_1,\omega_1)$.
A candidate for $\mho(S^2_1,\omega_1)$ is a sequence $\langle (h_\delta,C_\delta)\mid \delta\in S^2_1\rangle$ such that for every $\delta\in S^2_1$:
\begin{itemize}
\item $C_\delta$ is a club in $\delta$ of order-type $\omega_1$;
\item $h_\delta$ is a map from $\delta$ to $\omega_1$;
\item for every $\beta<\delta$, $h_\delta(\beta)=h_\delta(\min(C_\delta\setminus\beta))$;
\item for every $\beta\in\nacc(C_\delta)\setminus S^2_1$, $h_\delta(\beta)=0$.
\end{itemize}
Note that we may also assume the following:
\begin{itemize}
\item $\min(C_\delta)=0$ for every $\delta\in S^2_1$;
\item if $\delta\in\acc^+(S^2_1)$, then $\nacc(C_\delta)\s \{0\}\cup S^2_1$.
\end{itemize}
\begin{defn}\label{defn33}
Let $\phi:\lambda\rightarrow H_{\lambda}$ be a surjection such that each element is enumerated cofinally often.
For $\alpha\in E$, $\mathbb O_{\alpha}$ consists of conditions $p:=\langle\mathcal M_p,d_p,F_p\rangle$ such that:
\begin{itemize}
\item[(a)] $p\in\text{pre}\mathbb O_{\alpha}$;
\item[(b)] $F_p$ has the following property:
For $\gamma\in\dom(F_p)$, if $\phi(\gamma)$ is a $\mathbb O_{\gamma}$-name for a candidate $\langle (h^\gamma_{\delta},C^\gamma_\delta)\mid\delta\in S^2_1\rangle$, then we say that $\gamma$ is good and we require:
\begin{itemize}
\item[(1)] $p\res\gamma$ forces that for all $\mathsf{K}\in\pi_1(\mathcal M_p(\gamma))$,
$$\dot{h}_{\delta}^\gamma[\Delta_1(D^{\gamma}_p)]\cap\{\tau^{\gamma}_{\delta_\mathsf{K}}\}=\emptyset.$$
\item[(2)] $\Delta_1(\mathcal M_p(\gamma))$ is a subset of $D^{\gamma}_p$;
\item[(3)] For every $\mathsf{M}\in\pi_0(\mathcal M_p(\xi_0))$ such that $\delta_\mathsf{M}\in D^{\gamma}_p$, for every $\mathsf{N}\in\pi_1(\mathcal M_p(\gamma))$ with $\mathsf{N}\in_{\xi_0} \mathsf{M}$, $\delta_{\mathsf{M}\wedge \mathsf{N}}\in D^{\gamma}_p$.

\item[(4)] For every $\mathsf{K}\in\pi_1(\mathcal M_p(\gamma))$, for every $\mathsf{P} \in\pi_0(\mathcal M_p(\xi_0))$ that is $\in_{\xi_0}$-below $\mathsf{K}$ with $\delta_\mathsf{P}\in D^{\gamma}_p$,
if $\one\nolimits_{\gamma}\nforces\delta_\mathsf{P}\notin\acc^+(\nacc(\dot{C}^{\gamma}_{\delta_\mathsf{K}}\cap\delta_\mathsf{P}))$
and considering
$$ \text{Form}(\mathsf K):=\left\{\mathsf{N}\wedge\mathsf{M} \,\middle|\, \begin{array}{l}
\mathsf{N}\in\pi_1(\mathcal M_p(\gamma))\in_{\xi_0}\text{-above }\mathsf K\\
\mathsf{M}\in\pi_0(\mathcal M_p(\gamma))\text{ contains-}\in_{\xi_0}\mathsf N\\
\delta_M\in B^{\gamma}_p\text{ and }\mathsf{P}=\mathsf{N}\wedge\mathsf{M}.
\end{array} \right\}$$
one of the following holds:
\begin{itemize}
\item[(i)] $\mathsf P\in\text{form}(\mathsf K)$;
\item[(ii)] $\mathsf P\neq(\mathsf K\rest\xi_0)\wedge\mathsf S$ for any $\mathsf S\in\pi_0(\mathcal M_p(\xi_0))$;
\item[(iii)] $\delta_{\mathsf P}\in B^{\gamma}_p$,
\end{itemize}
then
$$p\res\gamma\forces\tau^{\gamma}_{\delta_\mathsf{K}}\notin\dot{h}^{\gamma}_{\delta_\mathsf{K}}[\delta_{\mathsf{P}}].$$
\end{itemize}
\end{itemize}
\end{defn}

We assert that $p\le_{\mathbb O_\alpha} q$ iff $p\leq_{\text{pre}\mathbb O_\alpha} q$.

\begin{conv}
Given $\mathsf{K}\in\pi_1(\mathcal M_p(\gamma))$, $\mathsf{P} \in\pi_0(\mathcal M_p(\xi_0))$ that is $\in_{\xi_0}$-below $\mathsf{K}$ with $\delta_\mathsf{P}\in D^{\gamma}_p$ we say that $\mathsf P$ is of the \emph{right form} for $\mathsf K$ in $p$ if for $\text{form}(\mathsf K)$ one of Clauses (i)--(ii) holds.

Note that if $q\leq p$ and $\mathsf P$ was of the right form for $\mathsf K$ in $p$ then it is of the right form for $\mathsf K$ in $q$.
\end{conv}

\begin{remark}\label{proper remark}
Since we are going to prove that our forcing is proper, given
a function $\dot{h}:\delta\rightarrow\omega_1$ and $\beta<\delta$,
for any condition $p$ which forces $\dot{h}(\beta)\in\omega_1$,
by considering an elementary model $\mathsf{M}$ containing all the relevant objects,
we may extend to a master-condition $p_\mathsf{M}\leq p$.

Now, for any $r\leq p$, there exists $q\in \mathsf{M}$ which is compatible with $r$.
As $\mathsf{M}$ contains $\dot{h},\beta,\delta$, there exists $\tau\in \mathsf{M}$ such that, $q\forces h_{\delta}(\beta)=\tau$.

Thus, there are only countable many different colors our forcing could assign to $\dot{h}_{\delta}(\beta)$.
\end{remark}

We verify that $\mathbb O_\alpha$ is a complete suborder of $\mathbb O_\beta$
whenever $\alpha\le\beta$.

\begin{lemma}\label{complete suborder}
Suppose $\alpha\leq \beta$ in $E$. Let $p\in\mathbb O_{\beta}$ and $q\in\mathbb O_{\alpha}$ such that $q\leq p\res\alpha$. Then, there exists $r\in\mathbb O_{\beta}$ such that $r\leq p,q$.
\end{lemma}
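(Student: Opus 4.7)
The plan is to construct $r \in \mathbb{O}_\beta$ by amalgamating the data of $p$ and $q$. For the $\mathbb{P}^\kappa$-component $\langle \mathcal M_r, d_r \rangle$, I would invoke the completeness of $\mathbb{P}^\kappa_\alpha$ inside $\mathbb{P}^\kappa_\beta$ from \cite{MohammadpourVelickovic} to obtain a common extension of $\langle \mathcal M_p, d_p \rangle$ and $\langle \mathcal M_q, d_q \rangle$ in $\mathbb{P}^\kappa_\beta$. For the decoration $F_r$, set $F_r \res (E \cap \alpha) := F_q$ and $F_r \res (E \setminus \alpha) := F_p \res (E \setminus \alpha)$; compatibility here is automatic because $q \leq p\res\alpha$ already forces $F_q$ to extend $F_p \cap H_\alpha$.

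The verification rests on a key rank-separation observation: every model in $\mathcal M_q$ is an $\alpha'$-model for some $\alpha' \leq \alpha$, hence is inactive at every $\gamma > \alpha$. Consequently $\mathcal M_r(\gamma) = \mathcal M_p(\gamma)$ for $\gamma \geq \alpha$, while dually $\mathcal M_r(\gamma) = \mathcal M_q(\gamma)$ for $\gamma < \alpha$. This cleanly splits the verification of $r \in \mathbb{O}_\beta$ by whether the good ordinal $\gamma \in \dom(F_r)$ lies below or above $\alpha$. The precondition axioms ($r \in \text{pre}\mathbb{O}_\beta$) and the order relations $r \leq p$, $r \leq q$ are then routine. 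For good $\gamma < \alpha$, $r\res\gamma$ coincides with $q\res\gamma$ at every relevant level, so clauses (1)--(4) of Definition~\ref{defn33} transfer from $q \in \mathbb{O}_\alpha$. For good $\gamma \geq \alpha$, clauses (1), (2) and the forcing conclusion of clause (4) transfer from $p$ via $r\res\gamma \leq p\res\gamma$ together with $\mathcal M_r(\gamma) = \mathcal M_p(\gamma)$.

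The principal obstacle is verifying clause (3)---and the ``right-form'' aspect of clause (4)---for $\gamma \geq \alpha$: here the outer quantifier ranges over the enlarged set $\pi_0(\mathcal M_r(\xi_0)) = \pi_0(\mathcal M_q(\xi_0)) \supsetneq \pi_0(\mathcal M_p(\xi_0))$, so a new countable model $\mathsf M \in \mathcal M_q(\xi_0) \setminus \mathcal M_p(\xi_0)$ may satisfy $\delta_\mathsf M \in D^\gamma_p$ and thereby demand meet-values $\delta_{\mathsf M \wedge \mathsf N} \in D^\gamma_r$ that were not originally in $D^\gamma_p$, or trigger clause (4)'s forcing conclusion for witnesses $\mathsf P$ at which $p\res\gamma$ had made no prior commitment. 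I plan to handle this by augmenting $D^\gamma_r \supseteq D^\gamma_p$ (and $B^\gamma_r \supseteq B^\gamma_p$ when necessary) to absorb these new witnesses, choosing the fresh $\tau$-values to preserve clause (1)'s avoidance statement---this is possible by Remark~\ref{proper remark}, since a proper forcing assigns only countably many colors to each $\dot h^\gamma_\delta(\beta)$, leaving ample room to avoid any prescribed finite set.
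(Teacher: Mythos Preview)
Your construction of $r$ and the rank-separation observation ($\mathcal M_r(\gamma)=\mathcal M_p(\gamma)$ for $\gamma\ge\alpha$, $\mathcal M_r(\gamma)=\mathcal M_q(\gamma)$ for $\gamma<\alpha$) match the paper exactly, as does the case split for clause~(b4). The paper likewise sets $F_r(\gamma):=F_q(\gamma)$ for $\gamma\in\dom(F_q)$ and $F_r(\gamma):=F_p(\gamma)$ for $\gamma\in\dom(F_p)\setminus\alpha$, invokes \cite[Lemma~3.8]{MohammadpourVelickovic} for the $\mathbb P^\kappa$-part, and checks (b4) by precisely your three subcases.

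Where you diverge is in the handling of your ``principal obstacle''. The paper does \emph{not} augment $D^\gamma_r$, $B^\gamma_r$, or assign any fresh $\tau$-values: it takes $F_r(\gamma)=F_p(\gamma)$ verbatim for $\gamma\ge\alpha$, declares (b1)--(b3) clear, and for (b4) relies solely on $\mathcal M_r(\gamma)=\mathcal M_p(\gamma)$ together with $D^\gamma_r=D^\gamma_p$. Your plan to enlarge $D^\gamma_r$ and select new colors via Remark~\ref{proper remark} is problematic: that remark rests on properness, which is established only in Section~\ref{sec4}, \emph{after} this lemma, and the properness induction presumes the iteration is well-behaved at lower levels---so invoking it here risks circularity. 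Moreover, any new element you add to $\Delta_1(D^\gamma_r)$ spawns fresh obligations in clauses~(b1) and~(b4) that you have not shown how to discharge without later machinery.

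If you believe the obstacle is genuine---that a new $\mathsf M\in\pi_0(\mathcal M_q(\xi_0))\setminus\pi_0(\mathcal M_p(\xi_0))$ could satisfy $\delta_{\mathsf M}\in D^\gamma_p$ and thereby activate (b3) or (b4)---the cleaner route is to argue directly that this does not occur (for instance via the $\in_{\xi_0}$-chain structure of $\mathcal M_r(\xi_0)$, which forces distinct $\delta$-values), rather than to patch it with an augmentation that imports forward references. The paper, for its part, simply does not engage with this subtlety and proceeds with the unaugmented $F_r$.
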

\begin{proof}
We define $r:=\langle \mathcal{M}_r,d_r,F_r\rangle$ as follows:
\begin{itemize}
\item $\mathcal M_r:=\mathcal M_p\cup\mathcal M_q$;
\item $d_r(\mathsf{M}):=\begin{cases} d_p(\mathsf{M}) & \mathsf{M}\in\mathcal M_p;\\
d_q(\mathsf{M}) & \text{otherwise}.
\end{cases} $
\item $\dom(F_r):=\dom(F_p)\cup\dom(F_q)$.
\item $F_r(\gamma):=\begin{cases}
F_p(\gamma),& \gamma\in\dom(F_p)\setminus\alpha;\\
F_q(\gamma),& \gamma\in\dom(F_q).
\end{cases}$
\end{itemize}

The fact that $\langle\mathcal M_r,d_r\rangle\leq\langle\mathcal M_p,d_p\rangle ,\langle\mathcal M_q,d_q\rangle$ is demonstrated in \cite[Lemma 3.8]{MohammadpourVelickovic}.
It is also clear that $r$ satisfies conditions (b1)--(b3) and $r\leq p,q$
More specifically, the only non-trivial part is to show that Clause (b4) holds.
Let $\gamma\in\dom(F_r)$, $\mathsf{K}\in\pi_1(\mathcal M_r(\gamma))$, $\mathsf{P} \in\pi_0(\mathcal M_r(\xi_0))$
that is $\in_{\xi_0}$-below $\mathsf{K}$ with $\delta_\mathsf{P}\in D^{\gamma}_r$.
Moreover, suppose $\one\nolimits_{\gamma}\nforces\delta_\mathsf{P}\notin\acc^+(\nacc(\dot{C}^{\gamma}_{\delta_\mathsf{K}}\cap\delta_\mathsf{P}))$
and $\mathsf P$ is of the right form.

Suppose that $\gamma\in\dom(F_q)$. Then, $\mathcal M_r(\gamma)=\mathcal M_q(\gamma)$ and $D^{\gamma}_r= D^{\gamma}_q$ so $\delta_{\mathsf P}\in D^{\gamma}_q$.
Hence (b4) holds by recalling $q$ is a condition.
Otherwise $\gamma\in\dom(F_p)$.
If $\gamma\geq\alpha$ as $\mathcal M_r(\gamma)=\mathcal M_p(\gamma)$, again by the fact $D^{\gamma}_r=D^{\gamma}_p$ we get that (b4) holds.
Else $\gamma<\alpha$.
As $q\leq p\res\alpha$ the conclusion holds as (b4) holds for $q$.
\end{proof}

The idea is that at stage $\gamma\in E$ we add a generic club $D_{\gamma}$ relative to $S^2_1$
which will witness the failure of the candidate $\langle (h^{\gamma}_{\delta},C^\gamma_\delta)\mid\delta\in (S^2_1)^{G_{\gamma}}\rangle$
given by $\phi(\gamma)$.
Namely, for each $\delta\in D_{\gamma}$ we will have that:
$$h^\gamma_{\delta}[D_\gamma\cap S^2_1]\neq\omega_1.$$

This is witnessed by the color $\tau^{\gamma}_{\delta}$ assigned to $\delta$ by the forcing.

Therefore, one must first verify that our poset preserves $\omega_1,\kappa,\lambda$,
which are collapsed in the final extension to be $\omega_1$, $\omega_2$ and $\omega_3$ respectively.
In order to accomplish our task,
We need to prove that for every $\alpha\in E$, the forcing $\mathbb O_{\alpha}$ is proper and $\kappa$-proper and that $\mathbb O_\lambda$ has the $\lambda$-cc.
This is the subject matter of the next section.

\section{The properness} \label{sec4}

We shall prove all of the properness statements by induction on $\alpha\in E$.
The base case, namely $\alpha=\xi_0$, follows from the fact that $\mathbb O_{\xi_0}$ is isomorphic to $\mathbb P^{\kappa}_{\xi_0}$ and therefore it is even strongly $\mathcal B_{st}$ and $\mathcal U^{\kappa}_{st}$ proper.
Thus, hereafter assume that we are given $\alpha\in E$ above $\xi_0$ satisfying that $\mathbb O_{\gamma}$ is proper for all $\gamma<\alpha$.

\subsection{The residue}

We start by defining the residues of conditions.
\begin{defn}[residue]
Given $\mathsf{M}\in\mathcal B_{st}\cup\mathcal U^{\kappa}_{st}$ and a condition $p$ with $\mathsf{M}\in\mathcal M_p$, \emph{the residue} of $p$ with respect to $\mathsf{M}$
is the triple $p\res\mathsf M:=\langle\mathcal M_{p\res M}, d_{p\res\mathsf{M}},F_{p\res\mathsf{M}}\rangle$
such that:
\begin{enumerate}
\item $\langle\mathcal M_{p\res\mathsf{M}},d_{p\res\mathsf{M}}\rangle:=\langle\mathcal M_p,d_p\rangle\res\mathsf{M}$;
\item $\dom(F_{p\res\mathsf{M}})=\dom(F_p)\cap\mathsf{M}$;
\item For every $\gamma\in\dom(F_{p\res\mathsf{M}})$, set:
\begin{itemize}
\item $B^{\gamma}_{p\res\mathsf M}:=\{\delta\in B^{\gamma}_p\mid\exists\mathsf P\in\mathcal M_{p\res\mathsf M}(\delta=\delta_{\mathsf P})\}$,
\item $D^{\gamma}_{p\res\mathsf M}:=\{\delta\in D^{\gamma}_p\mid\exists\mathsf P\in\mathcal M_{p\res\mathsf M}(\delta=\delta_{\mathsf P})\}$, and
\item $F_{p\res\mathsf{M}}(\gamma):=(B^{\gamma}_{p\res\mathsf M},\langle \tau^{\gamma}_{\delta, p}\mid\delta\in D^{\gamma}_{p\res\mathsf M}\rangle)$.
\end{itemize}
\end{enumerate}

\end{defn}

The following lemma verifies that our definition of a residue is indeed sound.
\begin{lemma}
Given $\mathsf{M}\in\mathcal B_{st}\cup\mathcal U^{\kappa}_{st}$ and a condition $p$ with $\mathsf{M}\in\mathcal M_p$,
$p\res\mathsf{M}$ is a legitimate condition extended by $p$.

\end{lemma}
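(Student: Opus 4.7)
The plan is to establish three things in sequence: that $p\res\mathsf{M}$ belongs to $\text{pre}\mathbb{O}_\alpha$ (clauses (a) and (b1)--(b3) of Definition~\ref{defn33}), that it additionally satisfies (b4) so $p\res\mathsf{M}\in\mathbb{O}_\alpha$, and that $p\leq_{\text{pre}\mathbb{O}_\alpha}p\res\mathsf{M}$.

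For the precondition part, clause (a) is immediate from Fact~\ref{fact221}, since $\langle\mathcal{M}_{p\res\mathsf{M}},d_{p\res\mathsf{M}}\rangle$ is by definition the $\mathbb{P}^\kappa_\alpha$-residue of $\langle\mathcal{M}_p,d_p\rangle$. For (b1): if $F_{p\res\mathsf{M}}\neq\emptyset$ then $\dom(F_p)\cap\mathsf{M}\neq\emptyset$, and since (b1) for $p$ gives $\xi_0\in\dom(F_p)$ while the Convention guarantees $\xi_0\in\mathsf{M}$, we get $\xi_0\in\dom(F_{p\res\mathsf{M}})$. Clause (b2) is built into the recipe for $F_{p\res\mathsf{M}}(\gamma)$. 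For (b3), every $\mathsf{M}'\in\mathcal{M}_{p\res\mathsf{M}}$ arises as $\mathsf{N}\res\mathsf{M}$ for some $\mathsf{N}\in\mathcal{M}_p$; given $\gamma,\delta\in\mathsf{M}'$ with $\delta\in\Delta_1(D^\gamma_{p\res\mathsf{M}})$, the properties of residues identify $\gamma,\delta$ as elements of $\mathsf{N}$, so (b3) for $p$ supplies $\tau^\gamma_{\delta,p}\in\mathsf{N}$ and transports it back to $\mathsf{M}'$.

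The substance of the proof lies in verifying clause (b4). Fix $\gamma\in\dom(F_{p\res\mathsf{M}})$, $\mathsf{K}\in\pi_1(\mathcal{M}_{p\res\mathsf{M}}(\gamma))$ and $\mathsf{P}\in\pi_0(\mathcal{M}_{p\res\mathsf{M}}(\xi_0))$ that is $\in_{\xi_0}$-below $\mathsf{K}$ with $\delta_\mathsf{P}\in D^\gamma_{p\res\mathsf{M}}$, and assume the non-accumulation and right-form hypotheses; the target is
$$(p\res\mathsf{M})\res\gamma\forces\tau^\gamma_{\delta_\mathsf{K}}\notin\dot{h}^\gamma_{\delta_\mathsf{K}}[\delta_\mathsf{P}].$$
Since $p\leq p\res\mathsf{M}$ (part (iii), below), the Convention following Definition~\ref{defn33} says being of the right form is preserved upward from $p\res\mathsf{M}$ to $p$; the non-accumulation hypothesis refers only to $\dot{C}^\gamma_{\delta_\mathsf{K}}$ and $\delta_\mathsf{P}$ and transfers verbatim; and $\mathsf{K},\mathsf{P}$ occur in the appropriate slices of $\mathcal{M}_p$ as well. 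Applying (b4) to $p$ therefore yields $p\res\gamma\forces\tau^\gamma_{\delta_\mathsf{K}}\notin\dot{h}^\gamma_{\delta_\mathsf{K}}[\delta_\mathsf{P}]$. To promote this to $(p\res\mathsf{M})\res\gamma$, we use the commutation $(p\res\mathsf{M})\res\gamma=(p\res\gamma)\res\mathsf{M}$, which places the residue condition in $\mathsf{M}$, together with the inductive strong properness of $\mathbb{O}_\gamma$ at $\mathsf{M}$: if $(p\res\mathsf{M})\res\gamma$ failed to force the statement, elementarity of $\mathsf{M}$ (all parameters $\tau^\gamma_{\delta_\mathsf{K}}, \dot{h}^\gamma_{\delta_\mathsf{K}},\delta_\mathsf{P}$ lying in $\mathsf{M}$ by (b3) and the Convention) would produce a $q\in\mathbb{O}_\gamma\cap\mathsf{M}$ below $(p\res\mathsf{M})\res\gamma$ forcing the negation, and strong properness would make $q$ compatible with $p\res\gamma$, contradicting what the latter forces.

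Finally, for (iii), the $\mathbb{P}^\kappa_\alpha$-comparison $\langle\mathcal{M}_p,d_p\rangle\leq\langle\mathcal{M}_{p\res\mathsf{M}},d_{p\res\mathsf{M}}\rangle$ is the extension clause of Fact~\ref{fact221}, while $\dom(F_p)\supseteq\dom(F_{p\res\mathsf{M}})$, $B^\gamma_p\supseteq B^\gamma_{p\res\mathsf{M}}$, $D^\gamma_p\supseteq D^\gamma_{p\res\mathsf{M}}$, and $\tau^\gamma_{\delta,p}=\tau^\gamma_{\delta,p\res\mathsf{M}}$ are hard-wired into the residue definition. The main obstacle is the promotion step inside (b4): it rests on the strong properness of the iterands $\mathbb{O}_\gamma$ for $\gamma<\alpha$ at standard models in $\mathcal{B}_{st}\cup\mathcal{U}^\kappa_{st}$, which must be supplied by the inductive hypothesis of Section~\ref{sec4} and therefore should be proved in tandem with the main properness theorem rather than beforehand.
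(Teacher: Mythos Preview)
Your argument has two genuine problems.

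First, you have conflated the clauses of the precondition definition with those of Definition~\ref{defn33}. Showing $p\res\mathsf{M}\in\text{pre}\mathbb{O}_\alpha$ is exactly clause~(a) of Definition~\ref{defn33}; clauses (b1)--(b3) there are \emph{additional} requirements (the forcing statement about $\dot h^\gamma_\delta[\Delta_1(D^\gamma)]$, the inclusion $\Delta_1(\mathcal M_{p\res\mathsf M}(\gamma))\subseteq D^\gamma_{p\res\mathsf M}$, and closure of $D^\gamma_{p\res\mathsf M}$ under meets). You verify the precondition's (b1)--(b3) but never the condition's (b1)--(b3). The paper handles (b1) by the same density trick as (b4) below, and (b2)--(b3) by direct inspection of the residue.

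Second, and more seriously, your promotion step for (b4) invokes \emph{strong} properness of $\mathbb O_\gamma$: you need that an arbitrary $q\in\mathsf M$ below the residue is compatible with $p\res\gamma$. But the paper states explicitly (in the discussion of appropriate conditions) that the forcing is \emph{not} strongly proper; only properness is available inductively, and that gives compatibility with \emph{some} condition in each dense set of $\mathsf M$, not with a specific $q$ forcing the negation. So the lift from $p\res\gamma$ to $(p\res\mathsf M)\res\gamma$ does not go through as written.

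The paper avoids this entirely by arguing in the opposite direction: rather than applying (b4) to the single condition $p$ and lifting, it takes an arbitrary $q\leq p\res\mathsf M$, observes that the hypotheses of (b4) (right form for $\mathsf K$, or $\delta_{\mathsf P}\in B^\gamma$, together with the $\tau$-values) are preserved downward to $q$, and then invokes (b4) for the genuine condition $q$ to get $q\res\gamma\Vdash\tau^\gamma_{\delta_{\mathsf K}}\notin\dot h^\gamma_{\delta_{\mathsf K}}[\delta_{\mathsf P}]$. Since this holds for every such $q$, the desired forcing statement for $(p\res\mathsf M)\res\gamma$ follows by density. No properness hypothesis is needed at all.
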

\begin{proof}
We first verify that $p\res\mathsf{M}$ is indeed a condition. Thus, we have to prove Clauses (a)--(b) of Definition~\ref{defn33} hold:

\begin{itemize}
\item[(a)] This follows directly from Fact~\ref{fact221}.
\item[(b)] We have to verify Clauses (1)--(4):
\begin{enumerate}
\item Suppose there exists $q\leq p\res\mathsf{M}$ such that, for some $\gamma\in\dom(F_{p\res\mathsf{M}})$ and $\delta\in\Delta_1(D^{\gamma}_{p\res\mathsf{M}})$, $q\forces\dot{h}^{\gamma}_{\delta}[\Delta_1(D^{\gamma}_{p\res\mathsf{M}})]\cap\{\tau^{\gamma}_{\delta,p\res\mathsf{M}}\}\neq\emptyset$.
But, as $q\leq p\res\mathsf{M}$ by the definition of extension we get:
\begin{itemize}
\item[$\br$] $D^{\gamma}_{p\res\mathsf{M}}\s D^{\gamma}_{q}$;

\item[$\br$] $\tau^{\gamma}_{\delta,q}=\tau^{\gamma}_{\delta,p}$.
\end{itemize}
So as $q\forces\dot{h}^{\gamma}_{\delta}[\Delta_1(D^{\gamma}_{q})]\cap\{\tau^{\gamma}_{\delta,q}\}=\emptyset$ by Clause (b1).
This is a contradiction.
\item Immediate from the construction.
\item Let $\delta_{\mathsf{P}}\in\Delta_0(D^{\gamma}_{p\res\mathsf{M}})$. Then, for any $\mathsf{N}\in_{\xi_0}\mathsf{P}$, as $\mathsf P\in_{\xi_0}\mathsf M$ and $\mathsf{P}$ is countable, $\mathsf{N}\in_{\xi_0}\mathsf M$ and therefore $\mathsf N\wedge\mathsf P\in_{\xi_0}\mathsf M$.
\item Given $\gamma\in\dom(F_{p\res\mathsf M})$, $\mathsf{K}\in\pi_1(\mathcal M_p(\gamma))$, $\mathsf{P} \in\mathcal M_p(\xi_0)$ countable $\in_{\xi_0}$-below $\mathsf{K}$ with $\delta_\mathsf{P}\in D^{\gamma}_p$.

Suppose $\mathsf{P}$ is of the right form for $\mathsf K$ in $p\res\mathsf M$ or $\delta_{\mathsf P}\in B^{\gamma}_{p\res\mathsf M}$.
Moreover we may assume,
$\one\nolimits_{\gamma}\nforces\delta_\mathsf{P}\notin\acc^+(\nacc(\dot{C}^{\gamma}_{\delta_\mathsf{K}}\cap\delta_\mathsf{P}))$. We need to show that,
$$(p\res\mathsf M)\res\gamma\forces\tau^{\gamma}_{\delta_\mathsf{K}}\notin\dot{h}^{\gamma}_{\delta_\mathsf{K}}[\delta_{\mathsf{P}}].$$
Let $q\leq (p\res\mathsf M)$. Since $\one\nolimits_{\gamma}\nforces\delta_\mathsf{P}\notin\acc^+(\nacc(\dot{C}^{\gamma}_{\delta_\mathsf{K}}\cap\delta_\mathsf{P}))$, $\mathsf P$ is of the right form for $\mathsf K$ in $q$ or, in the other case,
$\delta_{\mathsf P}\in B^{\gamma}_q$ and
$\tau^{\gamma}_{\mathsf K, p\res\mathsf M}=\tau^{\gamma}_{\mathsf K, q}$ we have that,
$$q\res\gamma\forces \tau^{\gamma}_{\delta_\mathsf{K}}\notin\dot{h}^{\gamma}_{\delta_\mathsf{K}}[\delta_{\mathsf{P}}]$$
As desired.
\end{enumerate}
\end{itemize}

In addition, it is clear that $p\leq p\res\mathsf M$\end{proof}

\subsection{Appropriate conditions}
As our main goal in this section is to prove properness, we have to use the following fact from \cite{MR3821634} which enables the amalgamation procedure in the proof that $\mathbb M^{\kappa}_{\lambda}$ is strongly proper.
\begin{fact}\label{Timer fact}
Let $\mathcal M_p\in\mathbb{M}^\kappa_{\alpha}$ and $\mathsf{M}\in\pi_0(\mathcal M_p)$. If $\mathcal M_q\in\mathbb{M}^\kappa_{\alpha}\cap \mathsf{M}$,
then for every $\gamma\in E\cap(\alpha+1)$ every element $\mathsf{P}\in(\mathcal M_p\wedge\mathcal M_q)(\gamma)\setminus(\mathcal M_p(\gamma)\cup\mathcal M_q(\gamma))$ does not belong to $\bigcup\{[\mathsf{N}\wedge \mathsf{M},\mathsf{N})_{p\wedge q}\mid \mathsf{N}\in\mathcal M_{p\rests \mathsf{M}}(\gamma)\}$.
\end{fact}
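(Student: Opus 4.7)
The strategy is to argue by contradiction, leveraging $\mathcal M_q\in\mathsf M$ to constrain how new elements can arise in $\mathcal M_p\wedge\mathcal M_q$ at level $\gamma$. Set $\delta:=\min(E\setminus(\gamma+1))$ and suppose, toward a contradiction, that some $\mathsf P\in(\mathcal M_p\wedge\mathcal M_q)(\gamma)$, absent from both $\mathcal M_p(\gamma)$ and $\mathcal M_q(\gamma)$, lies in $[\mathsf N\wedge\mathsf M,\mathsf N)_{p\wedge q}$ for some $\mathsf N\in\mathcal M_{p\rests\mathsf M}(\gamma)$.

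First I would classify the genuinely new elements of $(\mathcal M_p\wedge\mathcal M_q)(\gamma)$. By the structure of the meet operation in the Mohammadpour--Velickovic machinery, each such $\mathsf P$ is the $\delta$-restriction of a meet $\mathsf N^*\wedge\mathsf M^*$ between a Magidor model $\mathsf N^*$ and a countable model $\mathsf M^*$, with one participant in $\mathcal M_p$ and the other in $\mathcal M_q$. If the Magidor participant $\mathsf N^*$ lies in $\mathcal M_q\s\mathsf M$, then the meet is already computable inside $\mathsf M$, hence $\mathsf P\in\mathcal M_q(\gamma)$, contradicting newness. So we may assume $\mathsf N^*\in\pi_1(\mathcal M_p)$ and $\mathsf M^*\in\pi_0(\mathcal M_q)\s\mathsf M$.

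Next I would pin down $\mathsf N^*$ using the interval assumption. Fix $\mathsf N_0\in\pi_1(\mathcal M_p)$ with $\mathsf N_0\res\mathsf M=\mathsf N$. The $\in_\gamma$-chain condition on $\mathcal M_p(\gamma)$, together with the slot in which $[\mathsf N\wedge\mathsf M,\mathsf N)_{p\wedge q}$ sits, forces $\mathsf N^*=\mathsf N_0$: any other Magidor model of $\mathcal M_p$ either lies strictly $\in_\gamma$-above $\mathsf N_0$ (hence above $\mathsf N$) or strictly $\in_\gamma$-below $\mathsf N_0\wedge\mathsf M$. Once $\mathsf N^*=\mathsf N_0$, the commutation of restriction with meet, valid since $\mathsf M^*\in\mathsf M$, gives
$$(\mathsf N_0\wedge\mathsf M^*)\rest\delta=(\mathsf N_0\res\mathsf M)\wedge\mathsf M^*=\mathsf N\wedge\mathsf M^*,$$
which lives inside $\mathsf M$ and therefore inside $\mathcal M_p(\gamma)$, contradicting $\mathsf P\notin\mathcal M_p(\gamma)$.

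The main obstacle is justifying the commutation $(\mathsf N_0\wedge\mathsf M^*)\rest\delta=(\mathsf N_0\res\mathsf M)\wedge\mathsf M^*$; this requires unwinding the meet through the transitive collapse of $\mathsf N_0$ and checking that $\mathsf M^*\in\mathsf M$ is mapped identically by $\mathsf N_0$ and by its residue to $\mathsf M$. Once that is in place, the remainder is a chain-position argument inside $(\mathcal M_p\wedge\mathcal M_q)(\gamma)$. This is precisely the engineering behind the amalgamation analysis in \cite{MR3821634}: the residue to $\mathsf M$ is designed so that no freshly produced meet can be smuggled into an interval already recorded by the residue chain.
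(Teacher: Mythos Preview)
The paper does not prove this statement at all: it is quoted as a \emph{Fact} from \cite{MR3821634} and used as a black box enabling the amalgamation lemmas that follow. There is therefore no in-paper argument to compare your proposal against.

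That said, your sketch has genuine gaps as it stands. Two inferences are non-sequiturs. First, in the case $\mathsf N^*\in\pi_1(\mathcal M_q)$ you conclude ``the meet is already computable inside $\mathsf M$, hence $\mathsf P\in\mathcal M_q(\gamma)$''. But the other participant $\mathsf M^*$ lies in $\mathcal M_p$, not $\mathcal M_q$, so computability of the meet inside $\mathsf M$ does not place $\mathsf P$ in $\mathcal M_q(\gamma)$; membership in $\mathsf M$ and membership in the finite chain $\mathcal M_q$ are very different things. Second, your terminal step asserts that $\mathsf N\wedge\mathsf M^*$ ``lives inside $\mathsf M$ and therefore inside $\mathcal M_p(\gamma)$''. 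Again, being an element of $\mathsf M$ gives no reason whatsoever to belong to the finite set $\mathcal M_p(\gamma)$; the contradiction you need is with $\mathsf P\notin\mathcal M_p(\gamma)\cup\mathcal M_q(\gamma)$, and nothing you have written places $\mathsf P$ in either set.

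The underlying intuition---that new meets produced by the amalgamation cannot land in the residue intervals $[\mathsf N\wedge\mathsf M,\mathsf N)$---is correct, and the argument in \cite{MR3821634} does go through a height analysis showing that every model introduced by $\mathcal M_q$ (and every new meet it generates) has $\delta$-height lying in $\mathsf M$, whereas the open interval $(\delta_{\mathsf N\wedge\mathsf M},\delta_{\mathsf N})$ is disjoint from $\mathsf M\cap\kappa$. Your proposal gestures at this but substitutes a commutation identity that is neither stated precisely nor does the work you claim for it. If you want to carry out an independent proof, the clean route is to track $\delta$-heights rather than the models themselves.
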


Unfortunately in our scenario, Fact~\ref{Timer fact} alone is not enough in order to prove properness.
We need to introduce a new demand on a condition $q$ which we call \emph{appropriate conditions} for $p$, in order to amalgamate it with a candidate for master condition $p$.
In particular, since we shall choose a condition in a particular way, the forcing will not be strongly-proper.

This subsection shall describe a relationship between two conditions $p$ and $q$ that would allow us to amalgamate them.
Lemma~\ref{appropriate lemma} captures its main utilization for the amalgamation process.
In order to define the relation, we use the following definition introduced in \cite{rodriguez2022forcingsymmetricsystemsmodels}:

\begin{defn}[Gallart Rodriguez]
Let $\alpha\in E$. For $p\in\mathbb O_{\alpha}$, suppose $\mathsf{M}\in\pi_0(\mathcal M_p)\cap\mathcal B_{st}$. For each $\gamma\in\dom(F_p)$,
we define the set of \emph{$\varepsilon$-heights} $\varepsilon^{\gamma}_{p\res \mathsf{M}}:=\{\varepsilon^{\gamma}_{i},\bar{\varepsilon}^{\gamma}_i\mid i\leq m^{\gamma}_p\}$ as follows:
\begin{itemize}
\item $\varepsilon^{\gamma}_0=\bar{\varepsilon}^{\gamma}_0=\delta_\mathsf{M}$.
\item $\varepsilon^{\gamma}_1:=\max(\Delta_1(\mathcal M_p(\gamma))\cap\delta_\mathsf{M})$;
\item $\bar{\varepsilon}^{\gamma}_1=\delta_{\mathsf{N}\wedge \mathsf{M}}$ for $\mathsf{N}\in\mathcal M_p(\gamma)$ such that $\delta_\mathsf{N}=\varepsilon^{\gamma}_1$;
\item $\varepsilon^{\gamma}_{i+1}=\max(\Delta_1(\mathcal M_p)\cap\bar\varepsilon^\gamma_i)$;
\item $\bar{\varepsilon}^{\gamma}_{i+1}=\delta_{\mathsf{N}\wedge \mathsf{M}}$ for $\mathsf{N}\in\mathcal M_p(\gamma)$ such that $\delta_\mathsf{N}=\varepsilon_{i+1}$.
\end{itemize}
\end{defn}

Now, in addition we will need the following definition:

\begin{defn}
Let $\alpha\in E$. For $p\in\mathbb O_{\alpha}$,
suppose $\mathsf{M}\in\pi_0(\mathcal M_p)\cap\mathcal B_{st}$.
For each $\gamma\in\dom(F_p)\cap \mathsf{M}\cup\{\xi_0\}$,
we define the set of \emph{$\Lambda$-heights} $\Lambda^{\gamma}:=\{\Lambda^{\gamma}_i\mid i\leq m^\gamma_p\}$ as follows:

As a first step, denote $\bar{\Lambda}^{\gamma}_i$ as the first ordinal such that for all $\mathsf{K}\in\Delta^1(\mathcal M_p(\gamma))$,
\begin{enumerate}
\item If $\one\nolimits_{\gamma}$ forces $C^{\gamma}_{\delta_\mathsf{K}}\cap\bar{\varepsilon}^{\gamma}_i$ is bounded, then it forces
$C^{\gamma}_{\delta_\mathsf{K}}\cap[\bar{\Lambda}^{\gamma}_i,\bar{\varepsilon}^{\gamma})$ is empty.
\item If $S$ is countable model in $\mathcal M_p(\gamma)$ with height above $\bar{\varepsilon}^{\gamma}_{i}$ such that $\sup(\bar{\varepsilon}^{\gamma}_i\cap S)<\bar{\varepsilon}^{\gamma}_i$,
then, $\bar{\Lambda}^{\gamma}_i>\sup(\bar{\varepsilon}^{\gamma}_i\cap S)$.

\end{enumerate}

Next, define $\Lambda^{\gamma}_i$ to be the first ordinal above $\bar{\Lambda}^{\gamma}_i$ lying in $(\mathsf{N}_i\wedge \mathsf{M})\cap V_{\kappa}$, for any choice of $\mathsf{N}_i\in\mathcal M_{p\res \mathsf{M}}$ with $\varepsilon^{\gamma}_i=\delta_{\mathsf{N}_i}$, such that
for every countable $S\in\mathcal M_{p}(\gamma)$ with $\sup(\bar{\varepsilon}^{\gamma}_i\cap S)=\bar{\varepsilon}^{\gamma}_i$, it is the case that $\Lambda^{\gamma}_i>\min(S\setminus\bar{\Lambda}^{\gamma}_i)$.

For $\xi_0$ we define $\Lambda^{\xi_0}_i$ the same way except we consider $\mathsf{K}\in\pi_0(\mathcal M_p(\gamma))$ and $S\in\pi_0(\mathcal M_p(\gamma))$ for all $\gamma\in\dom(F_p)\cap\mathsf{M}$.
\end{defn}

Note that for all $i$:
$$\varepsilon^\gamma_{i+1}<\bar\Lambda^\gamma_i<\Lambda^\gamma_i<\bar\varepsilon^\gamma_{i}<\varepsilon^\gamma_{i}.$$

\begin{defn}
Given $\alpha\in E$, for $p\in\mathbb O_{\alpha}$, suppose $\mathsf{M}\in\pi_0(\mathcal M_p)\cap\mathcal B_{st}$,
$q\in\mathbb O_{\alpha}\cap \mathsf{M}$ extending $p\res\mathsf{M}$.
We say that $q$ is \emph{appropriate for $p$} if for every $\gamma\in\dom(F_p)\cap\mathsf{M}$ for every $i<m^{\gamma}_p$,
for every $\mathsf{K}\in\mathcal M_p(\gamma)$ and $\mathsf{P}\in\mathcal M_q(\gamma')\setminus \mathcal M_{p\restriction \mathsf{M}}(\gamma')$ where $\gamma'\in\{\gamma,\xi_0\}$ it is forced that if
$\delta_\mathsf{P}<\bar{\varepsilon}^{\gamma}_i<\delta_\mathsf{K}$,
then
$\delta_{\mathsf{P}}>\Lambda^{\gamma}_i$.
\end{defn}

\begin{lemma}\label{appropriate lemma}

Given $\alpha\in E$, for $p\in\mathbb O_{\alpha}$, suppose $\mathsf{\mathsf{M}}\in\pi_0(\mathcal M_p)\cap\mathcal B_{st}$ with $\xi_0\in\mathsf{M}$
and let $q\in\mathbb O_{\alpha}\cap \mathsf{M}$ extending $p\res\mathsf{\mathsf{M}}$ such that $q$ is appropriate for $p$.
Then, for all $\gamma\in\dom(F_p)\cap \mathsf{M}\cup\{\xi_0\}$ for every $i\leq m^{\gamma}_p$,
for every $\mathsf{K}\in\mathcal M_p(\gamma)$ and $\mathsf{P}\in\mathcal M_q(\gamma)\setminus \mathcal M_{p\res \mathsf{M}}(\gamma)$ $\in_{\gamma}$-below $\mathsf{K}$, such that $\delta_\mathsf{P}<\bar{\varepsilon}^{\gamma}_i<\delta_\mathsf{K}$ then, if $\one\nolimits_{\gamma}\forces\bar{\varepsilon^{\gamma}_i}\notin\acc^+(\nacc(C^{\gamma}_{\delta_\mathsf{K}}))$, then $\one\nolimits_{\gamma}\forces\delta_\mathsf{P}\notin\acc^+(\nacc(C^{\gamma}_{\delta_\mathsf{K}}))$.

\end{lemma}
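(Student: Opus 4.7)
My plan is to argue by contradiction: suppose some $r\le\one_\gamma$ forces $\delta_\mathsf{P}\in\acc^+(\nacc(\dot C^\gamma_{\delta_\mathsf{K}}))$. Combined with the lemma's hypothesis, in the $r$-extension the set $\nacc(C^\gamma_{\delta_\mathsf{K}})\cap\bar\varepsilon^\gamma_i$ is bounded below $\bar\varepsilon^\gamma_i$ while $\nacc(C^\gamma_{\delta_\mathsf{K}})\cap\delta_\mathsf{P}$ is unbounded in $\delta_\mathsf{P}$. Writing $\beta$ for the sup of the former, one gets $\delta_\mathsf{P}\le\beta<\bar\varepsilon^\gamma_i$, and the objective is to derive a contradiction from the placement $\delta_\mathsf{P}>\Lambda^\gamma_i$ that appropriateness of $q$ supplies.

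The natural first split is on whether $\one_\gamma$ already forces $C^\gamma_{\delta_\mathsf{K}}\cap\bar\varepsilon^\gamma_i$ to be bounded. In the affirmative sub-case, clause~(1) of the definition of $\bar\Lambda^\gamma_i$ yields $\one_\gamma\forces C^\gamma_{\delta_\mathsf{K}}\cap[\bar\Lambda^\gamma_i,\bar\varepsilon^\gamma_i)=\emptyset$, so $\nacc(C^\gamma_{\delta_\mathsf{K}})\cap\bar\varepsilon^\gamma_i\s\bar\Lambda^\gamma_i$. Since $q$ is appropriate for $p$, we have $\delta_\mathsf{P}>\Lambda^\gamma_i>\bar\Lambda^\gamma_i$, hence $\nacc(C^\gamma_{\delta_\mathsf{K}})\cap\delta_\mathsf{P}\s\bar\Lambda^\gamma_i<\delta_\mathsf{P}$, directly contradicting the choice of $r$.

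If $\one_\gamma$ does not force boundedness of $C^\gamma_{\delta_\mathsf{K}}\cap\bar\varepsilon^\gamma_i$, the argument becomes more delicate. I would extend $r$ so it decides the bound $\beta$ as a ground-model ordinal, and then split on the existence of a countable $S\in\mathcal M_p(\gamma)$ reflecting $\beta$. If there is such $S$ with $\sup(\bar\varepsilon^\gamma_i\cap S)<\bar\varepsilon^\gamma_i$ and $\beta\le\sup(\bar\varepsilon^\gamma_i\cap S)$, clause~(2) of $\bar\Lambda^\gamma_i$ forces $\bar\Lambda^\gamma_i>\sup(\bar\varepsilon^\gamma_i\cap S)\ge\beta\ge\delta_\mathsf{P}$, contradicting appropriateness. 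Otherwise the relevant $S$'s are cofinal in $\bar\varepsilon^\gamma_i$, and the second defining property of $\Lambda^\gamma_i$ supplies an $S$-element $\xi\in[\bar\Lambda^\gamma_i,\Lambda^\gamma_i)$; by elementarity of $\mathsf M$ one can then reflect the forced accumulation of nacc points at $\delta_\mathsf{P}$ to produce a nacc point above $\xi$, and iterating this up to $\bar\varepsilon^\gamma_i$ via the cofinality of $S$ would push nacc points above any purported bound below $\bar\varepsilon^\gamma_i$, contradicting $\bar\varepsilon^\gamma_i\notin\acc^+(\nacc)$.

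The case $\gamma=\xi_0$ is treated analogously using the $\pi_0$-restricted definition of the $\Lambda$-heights, letting $\gamma'$ range over $\dom(F_p)\cap\mathsf M$ as prescribed. I expect the main technical obstacle to be the last sub-case: transferring generic data, specifically the exact bound of $\nacc(C^\gamma_{\delta_\mathsf{K}})$ at $\bar\varepsilon^\gamma_i$, between heights $\delta_\mathsf{P}$ and $\bar\varepsilon^\gamma_i$, requires an elementarity/density argument inside $\mathsf M$ that crucially exploits the stratification $\bar\Lambda^\gamma_i<\Lambda^\gamma_i<\delta_\mathsf{P}<\bar\varepsilon^\gamma_i$ built into the definitions of appropriateness and of the $\Lambda$-heights.
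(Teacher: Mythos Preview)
Your Case~1 is exactly the paper's proof, and it is the \emph{entire} proof. What you are missing is that Case~2 is vacuous: for any set $C$ of ordinals one has $\acc^+(\nacc(C))=\acc^+(C)$, since between any two elements of $C$ below a limit point of $C$ there is always a successor-in-$C$ element, which lies in $\nacc(C)$. Hence the lemma's hypothesis
\[
\one\nolimits_{\gamma}\forces\bar{\varepsilon}^{\gamma}_i\notin\acc^+(\nacc(C^{\gamma}_{\delta_\mathsf{K}}))
\]
already says that $\one_\gamma$ forces $C^{\gamma}_{\delta_\mathsf{K}}\cap\bar{\varepsilon}^{\gamma}_i$ to be bounded below $\bar{\varepsilon}^{\gamma}_i$. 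You then invoke clause~(1) of the definition of $\bar\Lambda^\gamma_i$ and appropriateness exactly as you do in Case~1, and you are done; no contradiction framing is even needed.

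Your Case~2 sketch, besides being unnecessary, is not convincing as written. The step where you ``reflect the forced accumulation of nacc points at $\delta_\mathsf{P}$ to produce a nacc point above $\xi$'' and then iterate along $S$ up to $\bar\varepsilon^\gamma_i$ would require an elementarity argument about generic objects (the name $\dot C^\gamma_{\delta_\mathsf{K}}$ need not lie in $\mathsf M$ when $\gamma\notin\mathsf M$, and even when it does, the specific bound decided by $r$ need not), and nothing in the definitions of $\bar\Lambda^\gamma_i$ or $\Lambda^\gamma_i$ is set up to support such a transfer. Fortunately none of this is needed.
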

\begin{proof}
Let $\gamma\in\dom(F_p)\cap \mathsf{M}\cup\{\xi_0\}$, $i\leq m^{\gamma}_p$.
For $\mathsf{K}\in\mathcal M_p(\gamma)$ and $\mathsf{P}\in\mathcal M_q(\gamma)\setminus \mathcal M_{p\res \mathsf{M}}(\gamma)$ $\in_{\gamma}$-below $\mathsf{K}$, such that $\delta_\mathsf{P}<\bar{\varepsilon}^{\gamma}_i<\delta_\mathsf{K}$.
Suppose $\one\nolimits_{\gamma}\forces\bar{\varepsilon^{\gamma}_i}\notin\acc^+(\nacc(C^{\gamma}_{\delta_\mathsf{K}}))$.
We need to prove $\one\nolimits_{\gamma}\forces\delta_\mathsf{P}\notin\acc^+(\nacc(C^{\gamma}_{\delta_\mathsf{K}}))$.

By our assumption, as $\one\nolimits_{\gamma}$ forces $C^{\gamma}_{\delta_{\mathsf K}}\cap\varepsilon^{\gamma}_i$ is bounded, then it is forced that
$C^{\gamma}_{\delta_\mathsf{K}}\cap[\bar{\Lambda}^{\gamma}_i,\bar{\varepsilon}^{\gamma})$ is empty.
As $q$ is appropriate for $p$, $\one\nolimits_{\gamma}$ forces that $\delta_{\mathsf P}>\Lambda^{\gamma}_i$.
But then $\one\nolimits_{\gamma}\forces\delta_P\notin\acc^+(\nacc(C^{\gamma}_{\delta_{\mathsf K}}))$. As desired.
\end{proof}

Our first aim is to show that such condition $q$ is attainable from $p$.
In order to establish this we will need the following definition:
\begin{defn}
We say that a set $\Delta$ is \emph{unbounded in $\prod_{i<n, j<m}\delta^j_{i}$} iff for all matrices $\langle\varsigma^j_i\mid i<n,\, j<m\rangle\in\prod_{i<n, j<m}\delta^j_i$ there exists a matrix $\langle\eta^{j}_i\mid i<n,j<m\rangle\in\Delta$ such that $\varsigma^j_i<\eta^j_i<\delta^j_i$ for every $i<n$ and $j<m$.
\end{defn}

Now we arrive at the main lemma of this subsection.

\begin{lemma}\label{finding appropriate}
Given $\alpha\in E$, for $p\in\mathbb O_{\alpha}$, suppose $\mathsf{M}\in\pi_0(\mathcal M_p)\cap\mathcal B_{st}$. Let $D\in \mathsf{M}$ some dense set.
Then, there exists a $q\in\mathbb O_{\alpha}\cap \mathsf{M}$ which is appropriate for $p$.
\end{lemma}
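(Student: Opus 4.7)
The plan is to work inside $\mathsf M$, to observe that the threshold ordinals $\bar\varepsilon^\gamma_i,\Lambda^\gamma_i$ are in fact accessible from $\mathsf M$, and then to combine the density of $D$ with an unboundedness argument in a finite product of intervals (in the sense of the preceding definition) to extract the desired $q$. The first ingredient is that $p\res\mathsf M$ belongs to $\mathsf M$: this uses Fact~\ref{fact221} for the $\langle\mathcal M_p,d_p\rangle$-part, together with the explicit construction of $F_{p\res\mathsf M}$ from $F_p\cap\mathsf M$ and the heights of the models in $\mathcal M_{p\res\mathsf M}$ (all below $\delta_\mathsf M$), and the elementarity of the standard $\mathsf M$. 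The second ingredient is to identify $\bar\varepsilon^\gamma_i$ and $\Lambda^\gamma_i$ as ordinals in $\mathsf M$: fix $\gamma\in(\dom(F_p)\cap\mathsf M)\cup\{\xi_0\}$ and $i<m^\gamma_p$, and pick $\mathsf N_i\in\mathcal M_p(\gamma)$ with $\delta_{\mathsf N_i}=\varepsilon^\gamma_i$; then $\mathsf N_i\wedge\mathsf M$ is a countable model whose restriction appears in the residue $\mathcal M_{p\res\mathsf M}\s\mathsf M$, and since countable elements of the standard $\mathsf M$ are subsets of $\mathsf M$, both $\bar\varepsilon^\gamma_i=\delta_{\mathsf N_i\wedge\mathsf M}$ and the ordinal $\Lambda^\gamma_i\in(\mathsf N_i\wedge\mathsf M)\cap V_\kappa$ lie in $\mathsf M$.

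The core of the plan is an unboundedness claim. Inside $\mathsf M$, I consider
\[
\Delta:=\bigl\{\langle\eta^{\gamma}_i\rangle\in\textstyle\prod_{\gamma,i}\bar\varepsilon^\gamma_i \bigm| \exists q\in D,\ q\leq p\res\mathsf M,\ \text{every new }\mathsf P\in\mathcal M_q\text{ in slot }(\gamma,i)\text{ has }\delta_{\mathsf P}>\eta^{\gamma}_i\bigr\},
\]
where a new $\mathsf P\in\mathcal M_q\setminus\mathcal M_{p\res\mathsf M}$ belonging to the $\gamma$- or $\xi_0$-layer is said to be in slot $(\gamma,i)$ when $\delta_{\mathsf P}\in(\bar\varepsilon^\gamma_{i+1},\bar\varepsilon^\gamma_i)$. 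Using the density of $D$, the freedom of the $\mathbb P^\kappa_\lambda$-machinery to place finite tuples of countable models at prescribed heights inside $\mathsf M$, and the inductive hypothesis that $\mathbb O_\gamma$ is proper for every $\gamma<\alpha$, I would argue that $\Delta$ is unbounded in $\prod_{\gamma,i}\bar\varepsilon^\gamma_i$. Specializing the resulting unboundedness to the threshold matrix $\langle\Lambda^\gamma_i\rangle$ then produces the required $q\in D\cap\mathsf M$ extending $p\res\mathsf M$ whose new models all avoid the forbidden zones $(\bar\varepsilon^\gamma_{i+1},\Lambda^\gamma_i]$; in other words, $q$ is appropriate for $p$.

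The main obstacle is precisely the unboundedness of $\Delta$, and this is where the recipe defining $\Lambda^\gamma_i$ does the work. Clause~(1) in that definition (on the forced behaviour of $C^\gamma_{\delta_\mathsf K}\cap[\bar\Lambda^\gamma_i,\bar\varepsilon^\gamma_i)$) is what makes clause~(b4) of Definition~\ref{defn33} automatic for new $\mathsf P$'s sitting in the admissible zone, while clause~(2), demanding $\Lambda^\gamma_i>\min(S\setminus\bar\Lambda^\gamma_i)$ for each relevant countable $S\in\mathcal M_p(\gamma)$ with $\sup(\bar\varepsilon^\gamma_i\cap S)=\bar\varepsilon^\gamma_i$, is what leaves enough room inside $\mathsf M$ to still insert a new countable model in the slot $(\Lambda^\gamma_i,\bar\varepsilon^\gamma_i)$. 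The $\gamma$- and $\xi_0$-layer constraints must be met simultaneously (reflecting the choice $\gamma'\in\{\gamma,\xi_0\}$ in the definition of appropriateness), so the unboundedness statement is really about a product indexed by both layers; this should follow from the symmetric-systems-of-models machinery of Gallart Rodríguez invoked immediately before the lemma.
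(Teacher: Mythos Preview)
Your overall architecture --- define a set $\Delta$ of matrices inside $\mathsf M$, prove it is unbounded in a finite product, and then pick a matrix in $\Delta\cap\mathsf M$ dominating $\langle\Lambda^\gamma_i\rangle$ --- is the paper's architecture. But there is a genuine error in your second ingredient, and it propagates.

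You claim that $\bar\varepsilon^\gamma_i=\delta_{\mathsf N_i\wedge\mathsf M}\in\mathsf M$ because the meet ``appears in the residue''. It does not. The residue $\mathcal M_{p\res\mathsf M}$ contains $\mathsf N_i\res\mathsf M$, which is an $\alpha$-isomorphic copy of the \emph{Magidor} model $\mathsf N_i$ and still has height $\varepsilon^\gamma_i$; the countable meet $\mathsf N_i\wedge\mathsf M$ is a different object, computed using $\mathsf M$ itself and hence not an element of $\mathsf M$. Concretely, since $\mathsf N_i$ is Magidor, $(\mathsf N_i\wedge\mathsf M)\cap\kappa=\mathsf M\cap\delta_{\mathsf N_i}$, so $\bar\varepsilon^\gamma_i=\sup(\mathsf M\cap\varepsilon^\gamma_i)$. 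As $\varepsilon^\gamma_i$ has uncountable cofinality and $\mathsf M$ is countable, this supremum is strictly below $\varepsilon^\gamma_i$ and \emph{not} in $\mathsf M$. (Your inference ``$\Lambda^\gamma_i\in(\mathsf N_i\wedge\mathsf M)\cap V_\kappa\subseteq\mathsf M$'' is fine, because $\Lambda^\gamma_i$ is an element of the meet, not its supremum; but the same reasoning does not give $\bar\varepsilon^\gamma_i\in\mathsf M$.)

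This matters twice. First, your $\Delta$ is defined with slot-boundaries $\bar\varepsilon^\gamma_i$, so it is not definable in $\mathsf M$ and you cannot reflect a bounding matrix into $\mathsf M$. The paper instead uses $\varepsilon^\gamma_i$ (and auxiliary ordinals $\varsigma^\gamma_i\in\mathsf N_i\wedge\mathsf M\subseteq\mathsf M$) as the product coordinates, all of which are in $\mathsf M$. Second, and more importantly, the very fact that $\bar\varepsilon^\gamma_i\notin\mathsf M$ is the engine of the unboundedness proof: if $\Delta$ were bounded, elementarity yields a bounding matrix in $\mathsf M$, hence each coordinate lies below $\bar\varepsilon^\gamma_i$; but $p$ itself (which is in $D$ and extends $p\res\mathsf M$) has, in each slot, new models sitting at or above $\bar\varepsilon^\gamma_i$ (namely the meets $\mathsf N_i\wedge\mathsf M$), contradicting the bound. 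Your alternative justification --- ``freedom of the $\mathbb P^\kappa_\lambda$-machinery to place countable models at prescribed heights'' together with the inductive properness --- is not what is used and would not by itself produce a $q\in D$.
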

\begin{proof}
Without loss of generality, $p\in D$.
Recall that $\xi_0\in\dom(F_p)\cap \mathsf{M}$.
For $\gamma\in\dom(F_p)\cap \mathsf{M}$, let $\vec{\varepsilon}^{\gamma}:=\langle\varepsilon^{\gamma}_i\mid i\leq m^{\xi_0}_{p}+1\rangle$ (with repetitions) the set of the critical cofinality $\varepsilon$-heights and $\epsilon^{\gamma}_{m_p^{\xi_0}}:=\kappa$.
Set $F:=\dom(F_{p\restriction \mathsf{M}})$ and $n:=m_{p}^{\xi_0}$.

For each $i\leq n$ and $\gamma\in F$, set $\mathsf{N}_i\in\mathcal M_{p\res \mathsf{M}}(\gamma)$ such that $\delta_{\mathsf{N}^{\gamma}_i}=\varepsilon^{\gamma}_i$.
Let $\varsigma^{\gamma}_i$ be the first element in $\mathsf{M}\cap \mathsf{N}^{\gamma}_i\rest\gamma\cap V_{\kappa}$ above $\max(\Delta(\mathcal M_{p\res \mathsf{M}}(\gamma))\cap\bar{\varepsilon}^{\gamma}_i)$.

Now, define $\Delta$ to be the set of all matrices $\langle\eta^{\gamma}_i\mid i\leq n,\gamma\in F\rangle$ such that, there exists a $q\in D$ extending $p\res \mathsf{M}$ with the following properties:
\begin{itemize}
\item For all $\gamma\in F$ and $i<n$, $\Delta(\mathcal M_q(\gamma))\setminus\Delta^{\gamma}_{p\restriction \mathsf{M}}\cap[\varepsilon_{i+1}^\gamma,\varepsilon_i^\gamma]\s(\eta^{\gamma}_i,\varepsilon^{\gamma}_i]$.
\item For $i=n$, $\Delta(\mathcal M_q(\gamma))\setminus\Delta^{\gamma}_{p\res \mathsf{M}}\cap(\varepsilon^{\gamma}_{i-1},\kappa)\s(\eta^{\gamma}_i,\kappa)$.
\end{itemize}

Note that $\Delta$ is definable in $\mathsf{M}$.

\begin{claim}
$\Delta$ is unbounded in $\prod_{i\leq n+1,\gamma\in F}\varepsilon^{\gamma}_i$.
\end{claim}
\begin{proof}
Suppose not. Then, as $\Delta$ is definable in $\mathsf{M}$ there exists a sequence $\langle\eta^{\gamma}_i\mid i\leq n,\gamma\in F\rangle$ in $\mathsf{M}$, such that for all $q\in D$ there is $\gamma_q\in F$ and $i_q\leq n$ with $\sup(\Delta(\mathcal M_q(\gamma_q))\setminus\varsigma^{\gamma}_{i_q})<\eta^{\gamma_q}_{i_q}$.

In particular, the conclusion holds for $p$.
But then, we get that $\delta_{\mathsf{M}\wedge \mathsf{N}\rests\gamma_p}\leq\eta^{\gamma_p}_i\in \mathsf{N}\wedge \mathsf{M}\rest\gamma_p$ in case $i_p<n$.
And $\delta_\mathsf{M}<\eta^{\gamma_p}_{i_p}\leq\eta^{\gamma_p}\in \mathsf{N}\wedge \mathsf{M}\rest\gamma_p$ in case $i_p=n$.
This is a contradiction.\qedhere
\end{proof}

Since $\Delta$ is an unbounded set in $\mathsf{M}$, and as $\langle\Lambda^{\gamma}_i\mid\gamma\in F,i\leq n\rangle$ are defined in $\mathsf{M}$, by elementarity of $\mathsf{M}$ there exists $\langle\eta^{\gamma}_i\mid \gamma\in F,i\leq n\rangle$ in $\Delta\cap \mathsf{M}$ such that for every $i\leq n$ and $\gamma\in F$, $\Lambda^{\gamma}_i<\eta^{\gamma}_i$.
Let $q\in D\cap \mathsf{M}$ a condition witnessing this.

Now, let $\gamma\in F$. Given $\mathsf{K}\in\mathcal M_p(\gamma)$ and $\mathsf{P}\in\mathcal M_q(\gamma)$ with $\delta_\mathsf{P}<\bar{\varepsilon}^{\gamma}_i<\delta_{\mathsf{K}}$.
Then, by the choice of $\eta^{\gamma}_i$ we get because $\delta_\mathsf{P}\in\Delta(\mathcal M_q)\setminus\varsigma^{\gamma}_i$, $\delta_p>\eta^{\gamma}_i>\Lambda^{\gamma}_i$.
As desired.
\end{proof}

\subsection{Countable properness}

First, as usual we must prove that for each $p\in\mathbb O_{\alpha}$ for some $\alpha\in E$ and $\mathsf{M}\in\mathcal B_{st}$ with $p\in \mathsf{M}$ we may extend to a condition we will later show is a master condition.

\begin{lemma}
Let $p\in\mathbb O_{\alpha}$ and $\mathsf{M}\in\mathcal B_{st}$ such that $p\in \mathsf{M}$. Then there is a condition $p_\mathsf{M}\leq p$ such that $\mathsf{M}\in p_\mathsf{M}$ and $\delta_\mathsf{M}\in D^{\gamma}_{p_\mathsf{M}}$ for all $\gamma\in\dom(F_{p_\mathsf{M}})$.
\end{lemma}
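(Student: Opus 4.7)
The plan is to construct $p_\mathsf{M}$ by a single explicit augmentation of $p$: throw $\mathsf{M}$ into the model part, keep the decoration $d_p$ intact, and for each $\gamma\in\dom(F_p)$ add $\delta_\mathsf{M}$ (and the meet-heights forced upon us by clause (b3)) into both $B^\gamma$ and $D^\gamma$. Concretely, set $\mathcal M_{p_\mathsf{M}}:=\mathcal M_p\cup\{\mathsf{M}\}$, $d_{p_\mathsf{M}}:=d_p$, $\dom(F_{p_\mathsf{M}}):=\dom(F_p)$, and, for each $\gamma\in\dom(F_p)$,
\[
D^\gamma_{p_\mathsf{M}}:=D^\gamma_p\cup\{\delta_\mathsf{M}\}\cup\{\delta_{\mathsf{M}\wedge\mathsf{N}}\mid \mathsf{N}\in\pi_1(\mathcal M_p(\gamma)),\ \mathsf{N}\in_{\xi_0}\mathsf{M}\},
\]
$B^\gamma_{p_\mathsf{M}}:=B^\gamma_p\cup\{\delta_\mathsf{M}\}$, and $\tau^\gamma_{\delta,p_\mathsf{M}}:=\tau^\gamma_{\delta,p}$ for $\delta\in D^\gamma_p$ and $0$ otherwise. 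The ordering clauses then give $p_\mathsf{M}\leq p$ immediately, and both desired conclusions ($\mathsf{M}\in\mathcal M_{p_\mathsf{M}}$ and $\delta_\mathsf{M}\in D^\gamma_{p_\mathsf{M}}$ for every $\gamma\in\dom(F_{p_\mathsf{M}})$) hold by construction; what remains is the verification that $p_\mathsf{M}\in\mathbb O_\alpha$, i.e.\ clauses (a) and (b1)–(b4) of Definition~\ref{defn33}.

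Clause (a) reduces to checking that $\langle\mathcal M_{p_\mathsf{M}},d_{p_\mathsf{M}}\rangle\in\mathbb P^\kappa_\alpha$; since $\mathsf{M}\in\mathcal B_{st}$ is standard and $p\in\mathsf{M}$ entails that $\mathsf{M}$ sits $\in_\xi$-above every model of $\mathcal M_p$ at each $\xi\in E$, the chain requirement of $\mathbb M^\kappa_\alpha$ is preserved by the standard Velickovic–Mohammadpour analysis (and $d_p$ is unchanged, so $(\star)$ is inherited). All the new elements we threw into $D^\gamma$ are heights of countable models, so $\Delta_1(D^\gamma_{p_\mathsf{M}})=\Delta_1(D^\gamma_p)$ and (b1) follows for $p_\mathsf{M}$ from its form for $p$ (passing from $p\res\gamma$ to the stronger $p_\mathsf{M}\res\gamma$). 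Likewise $\Delta_1(\mathcal M_{p_\mathsf{M}}(\gamma))=\Delta_1(\mathcal M_p(\gamma))$, giving (b2), and (b3) is satisfied by our explicit insertion of the meet-heights.

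The crux is clause (b4). For pairs $(\mathsf{K},\mathsf{P})$ with $\mathsf{P}\in\pi_0(\mathcal M_p(\xi_0))$ and $\delta_\mathsf{P}\in D^\gamma_p$, the clause holds at $p$ and passes through to $p_\mathsf{M}$ unchanged. The only genuinely new pair to inspect has $\mathsf{P}=\mathsf{M}$. Here the premise fails vacuously: since $p\in\mathsf{M}$, every $\mathsf{K}\in\mathcal M_p$ belongs to $\mathsf{M}$, hence $\mathsf{K}\in_{\xi_0}\mathsf{M}$, and this places $\mathsf{M}$ strictly on top of the $\in^{*}_{\xi_0}$-chain $\mathcal M^{\xi_0}_{p_\mathsf{M}}$; thus $\mathsf{M}$ is never $\in^{*}_{\xi_0}$-below any such $\mathsf{K}$, and (b4) has no new instance to verify.

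The main expected obstacle is (a) combined with a subtle case of (b4) that would arise if a freshly added meet-height $\delta_{\mathsf{M}\wedge\mathsf{N}}$ happened to coincide with $\delta_\mathsf{P}$ for some $\mathsf{P}\in\pi_0(\mathcal M_p(\xi_0))$ with $\delta_\mathsf{P}\notin D^\gamma_p$, which would trigger a previously uncovered instance of (b4) for the pair $(\mathsf{K},\mathsf{P})$. In that event one falls back on the facts that such $\mathsf{P}$ is a countable model with $\delta_\mathsf{P}\in\mathsf{M}$, so by precondition~(b3) $\tau^\gamma_{\delta_\mathsf{K},p}\in\mathsf{M}$, and the inductive properness of $\mathbb O_\gamma$ (available by the hypothesis $\gamma<\alpha$) together with the analysis in Remark~\ref{proper remark} confirms that $p_\mathsf{M}\res\gamma$ forces $\tau^\gamma_{\delta_\mathsf{K}}\notin\dot h^\gamma_{\delta_\mathsf{K}}[\delta_\mathsf{P}]$, completing the verification.
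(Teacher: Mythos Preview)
Your construction and the verification of clauses (a), (b1)--(b3) match the paper's approach, but your treatment of clause (b4) has a genuine gap.

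You assert that for pairs $(\mathsf K,\mathsf P)$ with $\mathsf P\in\pi_0(\mathcal M_p(\xi_0))$ and $\delta_{\mathsf P}\in D^\gamma_p$, clause (b4) ``holds at $p$ and passes through to $p_{\mathsf M}$ unchanged''. This overlooks that the hypothesis of (b4) --- that $\mathsf P$ be of the \emph{right form} for $\mathsf K$ (clauses (i)--(ii)) or that $\delta_{\mathsf P}\in B^\gamma$ --- is condition-dependent. Enlarging the model set and adding $\delta_{\mathsf M}$ to $B^\gamma$ changes $\mathrm{Form}(\mathsf K)$: with $\mathsf S=\mathsf M$ now available (and $\delta_{\mathsf M}\in B^\gamma_{p_{\mathsf M}}$), a model $\mathsf P=\mathsf N\wedge\mathsf M$ can newly satisfy clause (i) in $p_{\mathsf M}$ while (i), (ii), (iii) all fail in $p$. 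For such pairs you cannot appeal to (b4) at $p$, yet you give no separate argument. The paper handles precisely this case: it shows that if $\mathsf P$ is of the right form in $p_{\mathsf M}$ via a new $\mathsf S$ (either $\mathsf M\rest\xi_0$ itself or a meet $\mathsf N'\wedge(\mathsf M\rest\xi_0)$), then in fact $\mathsf K\rest\xi_0\in_{\xi_0}\mathsf P$, contradicting the premise that $\mathsf P$ lies $\in_{\xi_0}$-below $\mathsf K$.

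Your final ``fallback'' paragraph does not rescue the argument. Remark~\ref{proper remark} only tells you that the set of possible values of $\dot h^\gamma_{\delta_{\mathsf K}}(\beta)$ is countable; it says nothing about a \emph{specific} ordinal $\tau^\gamma_{\delta_{\mathsf K}}$ being omitted from $\dot h^\gamma_{\delta_{\mathsf K}}[\delta_{\mathsf P}]$. Knowing $\tau^\gamma_{\delta_{\mathsf K}}\in\mathsf M$ is of no help either, since $\tau^\gamma_{\delta_{\mathsf K}}$ was fixed by $p$ before $\mathsf M$ entered the picture and cannot be retroactively chosen to avoid the image. The correct route is the structural contradiction described above, not an appeal to properness.
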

\begin{proof}
Set $p_\mathsf{M}:=\langle\mathcal M_p\wedge\{\mathsf{M}\}, d_p, F_q\rangle$ where:
\begin{enumerate}
\item $\dom(F_{p_\mathsf{M}})=\dom(F_p)$;
\item For $\gamma\in\dom(F_{p_\mathsf{M}})$ if $\gamma$ is good, then set $F_{p_\mathsf{M}}(\gamma):=(B^{\gamma}, \langle \tau^{\gamma}_{\delta}\mid \delta\in\Delta_1(D^{\gamma})\rangle)$ where:
\begin{itemize}
\item $D^{\gamma}:=D^{\gamma}_p\cup\{\delta_{\mathsf{N}\wedge \mathsf{M}},\delta_\mathsf{M}\mid \mathsf{N}\in\pi_1(\mathcal M_p(\gamma))\}$;
\item $B^{\gamma}:=B^{\gamma}_p\cup\{\delta_{\mathsf M}\}$;
\item $\tau^{\gamma}_{\delta}:=\tau^{\gamma}_{\delta,p}$ for $\delta\in\Delta_1(D^{\gamma}_p)$.
\end{itemize}
Otherwise, $\gamma$ is not good, and then we set $F_q:=F_p$.
\end{enumerate}

Note that verifying that indeed $p_\mathsf{M}\leq p$ and Clauses (a),(b1)--(b3) is clear.
Thus it is only left to show that $p_\mathsf{M}$ satisfies Clause (b4).
Let $\gamma\in\dom(F_p)$, $\mathsf{K}\in\pi_1(\mathcal M_{p_\mathsf{M}}(\gamma))$
and $\mathsf{P}\in\pi_0(\mathcal M_{p_\mathsf{M}}(\xi_0))$.
Since $\pi_1(\mathcal M_{p_\mathsf M})=\pi_1(\mathcal M_p)$, $\mathsf K\in\pi_1(\mathcal M_p)$.
Suppose $\mathsf P$ is such that $\delta_{\mathsf P}\in B^{\gamma}_p$
we finish by the fact $p$ is condition.
Otherwise $\mathsf P$ is of the right form for $\mathsf K$ in $p_{\mathsf M}$
If we are able to show it is of the right form for $\mathsf K$ in $p$ we are done.
We divide into two cases:
\begin{itemize}
\item[$\br$] If $\mathsf P\neq(\mathsf K\rest\xi_0)\wedge\mathsf S$ for any $\mathsf S\in\mathcal M_{p_{\mathsf M}}(\xi_0)$, then it is of the right form for $\mathsf K$ in $p$.
\item[$\br$] Else, as $\mathsf P$ is of the right form for $\mathsf K$ in $p_{\mathsf M}$,
There exist $\mathsf N\in\pi_1(\mathcal M_{p}(\xi_0))$ $\in_{\xi_0}$
above $\mathsf K$ and $\mathsf S\in\pi_0(\mathcal M_{p_{\mathsf M}}(\xi_0))$
such that $\mathsf P=\mathsf N\wedge\mathsf S$.
If $\mathsf S\in\pi_0(\mathcal M_p(\xi_0))$ then $\mathsf P$ is of the right form for $\mathsf K$ in $p$.
Otherwise, $\mathsf S=\mathsf N'\wedge(\mathsf M\rest\xi_0)$ for some $\mathsf N'\in\pi_1(\mathcal M_p)$.
As $\mathsf S$ is $\in_{\xi_0}$-above $\mathsf K\rest\xi_0$ and $\mathsf K\rest\xi_0\in\mathsf M$,
we have that $\mathsf K\rest\xi_0\in_{\xi_0}\mathsf S$.
So, $\mathsf K\rest\xi_0\in_{\xi_0} \mathsf P$ this is acontradiction to the assumption $\mathsf P$ is $\in_{\xi_0}$-below $\mathsf K$.
\end{itemize}

\end{proof}

Given $\alpha\in E$, for $r\in\mathbb O_{\alpha}\cap \mathsf{M}$ where $\mathsf{M}\in\mathcal B_{st}$, in the previous Lemma we have showed that one may extend $r$ to a condition $r_\mathsf{M}$ such that, $\mathsf{M}\in\mathcal M_{r_\mathsf{M}}$.
Through the rest of this subsection we shall show that $r_\mathsf{M}$ is indeed a master condition.
We again remind the reader that the proof will go by induction.

In order to achieve our goal, take any $p\leq r_\mathsf{M}$.
Denote $\beta:=\max(\dom(F_p)\cap \mathsf{M})$.
By the induction hypothesis, as $q\res\beta$ is appropriate for $p\res\beta$, there exists $t'\in\mathbb O_{\beta}$ such that $\bar{t}\leq q\res\beta,p\res\beta$.

Applying Lemma~\ref{appropriate lemma} there exists $q\in\mathbb O_{\alpha}\cap \mathsf{M}$ extending $p\res \mathsf{M}$ appropriate for $p$.

We define $t:=\langle\mathcal M_t,d_t,F_t\rangle$ as follows:
\begin{enumerate}
\item $\langle\mathcal M_t,d_t\rangle:=\langle\mathcal M_p,d_p\rangle\wedge\langle\mathcal M_q,d_q\rangle$.
\item $\dom(F_t)=\dom(F_{\bar{t}})\cup(\dom(F_p)\setminus\beta)\cup(\dom(F_q)\setminus\beta)$.
\item For each $\gamma\in\dom(F_{t})\cap\beta$, $F_t(\gamma)=F_{\bar{t}}(\gamma)$.
\item For each $\gamma\in\dom(F_t)\setminus\beta$ and $\mathsf{K}\in\mathcal M_t(\gamma)$ we set
\begin{itemize}
\item $D^{\gamma}_t:=D^{\gamma}_p\cup D^{\gamma}_q$;
\item $B^{\gamma}_t:=\begin{cases} B^{\gamma}_{\bar{t}}& \gamma<\beta;\\
B^{\gamma}_q& \gamma\in\mathsf M\setminus\beta;\\
B^{\gamma}_p& \text{otherwise.}

\end{cases}$
\item If $\gamma\in\dom(F_q)$ and $\mathsf{K}\in\pi_1(\mathcal M_p(\gamma))$ or $\gamma\in\dom(F_p)$ and $\mathsf{K}\in\mathcal M_q(\gamma)$, given $\mathsf{P}\in\pi_0(\mathcal M_t(\xi_0))$ $\in_{\xi_0}$-below $\mathsf{K}$
set
$$\tau^{\gamma}_{\mathsf{K},\mathsf{P}}:=\sup\{\tau<\omega_1\mid\exists t'\in\mathbb O_{\gamma}(t'\leq t\res\gamma)\,\&\,(t'\forces\dot{h}^{\gamma}_{\delta}[\delta_\mathsf{P}]\s\tau)\} $$

and for $\mathsf{N}\in\pi_1(\mathcal M_t(\xi_0))$ $\in_{\xi_0}$-below $\mathsf{K}$,
$$\tau^{\gamma}_{\mathsf{K},\mathsf{N}}:=\sup\{\tau<\omega_1\mid\exists t'\in\mathbb O_{\gamma}(t'\leq t\res\gamma)\,\&\,(t'\forces\dot{h}^{\gamma}_{\delta}(\delta_\mathsf{N})\in\tau)\} $$

As we already know that $\mathbb O_{\gamma}$ is proper, by Remark~\ref{proper remark} both $\tau^{\gamma}_{\mathsf{K},\mathsf{P}}$ and $\tau^{\gamma}_{\mathsf{K},\mathsf{N}}$ are strictly less than $\omega_1$.
Thus, the following is a well defined countable ordinal for $\gamma\in \dom(F_t)$ and $\mathsf{K}\in\pi_1(\mathcal M_t(\gamma)$:

$$\tau^{\gamma}_{\delta_\mathsf{K}}:=\begin{cases}\min(\omega_1\setminus\bigcup\{\tau^{\gamma}_{\mathsf{K},\mathsf{P}}\mid \mathsf{P}\in\mathcal M_t(\xi_0)\}) & \gamma\in\dom(F_q), \mathsf{K}\in\mathcal M_p;\\
\text{same as above}& \gamma\in\dom(F_p), \mathsf{K}\in\mathcal M_q;\\
\tau^{\gamma}_{\delta_\mathsf{K}} &\text{otherwise}

\end{cases}$$

\item For $\gamma\in\dom(F_p)$, $\mathsf{K}\in\pi_1(\mathcal M_q(\gamma))$ we define $\tau^{\gamma}_\mathsf{K}$ similarly.

\end{itemize}
\end{enumerate}

We next present an argument which will appear several times throughout this paper:

\begin{lemma}\label{crucial claim}
Suppose $\mathsf{K}\in\pi_1(\mathcal M_p(\gamma))\setminus\mathcal M_q(\gamma)$.
Let $i<m_p$ such that $\bar{\varepsilon}^{\xi_0}_i<\delta_{\mathsf K}$.
If $\one\nolimits_{\gamma}\nforces\bar{\varepsilon}^{\xi_0}_i\notin\acc^+(\nacc(C^{\gamma}_{\delta_\mathsf{K}}))$, then
$$t\res\gamma\forces\tau^{\gamma}_{\delta_\mathsf{K}}\notin h^{\gamma}_{\delta_\mathsf{K}}[\bar{\varepsilon}^{\xi_0}_{i}].$$
\end{lemma}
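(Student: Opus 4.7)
The plan is to argue by contradiction, exploiting the diagonal choice of $\tau^{\gamma}_{\delta_{\mathsf K}}$. Suppose, for a contradiction, that there is some $t' \le t\res\gamma$ and $\beta<\bar{\varepsilon}^{\xi_0}_i$ with $t'\forces \dot h^{\gamma}_{\delta_{\mathsf K}}(\beta) = \tau^{\gamma}_{\delta_{\mathsf K}}$. I want to locate a countable model $\mathsf P\in\pi_0(\mathcal M_t(\xi_0))$ that is $\in_{\xi_0}$-below $\mathsf K$ with $\delta_{\mathsf P}\ge \bar{\varepsilon}^{\xi_0}_i$, so that $\beta<\delta_{\mathsf P}$; then the definition of $\tau^{\gamma}_{\mathsf K,\mathsf P}$ (capturing all possible values of $\dot h^{\gamma}_{\delta_{\mathsf K}}$ on $[0,\delta_{\mathsf P})$ produced by extensions of $t\res\gamma$, which is countable by Remark~\ref{proper remark}) combined with the diagonal definition of $\tau^{\gamma}_{\delta_{\mathsf K}}$ (as the minimum ordinal avoiding the union of the $\tau^{\gamma}_{\mathsf K,\mathsf P}$'s) immediately contradicts the existence of~$t'$.

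The natural candidate for $\mathsf P$ is $\mathsf N_i \wedge \mathsf M$, where $\mathsf N_i\in\pi_1(\mathcal M_p(\gamma))$ is chosen with $\delta_{\mathsf N_i}=\varepsilon^{\xi_0}_i$; by definition of the $\varepsilon$-heights, $\delta_{\mathsf N_i\wedge \mathsf M}=\bar{\varepsilon}^{\xi_0}_i$. To place this $\mathsf P$ in $\mathcal M_t(\xi_0)$, I will use the amalgamation $\mathcal M_t=\mathcal M_p\wedge \mathcal M_q$ from \cite{MohammadpourVelickovic}: since $\mathsf N_i,\mathsf M\in\mathcal M_p\s \mathcal M_t$, the meet $\mathsf N_i\wedge \mathsf M$ is realised inside the chain $\mathcal M_t^{\xi_0}$. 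That $\mathsf P$ is $\in_{\xi_0}$-below $\mathsf K$ follows because $\mathsf K$ is $\in_{\xi_0}^*$-above $\mathsf N_i$ in $\mathcal M_p^{\xi_0}$ (as $\delta_{\mathsf K}>\delta_{\mathsf N_i}=\varepsilon^{\xi_0}_i$) together with the structural compatibility of $\wedge$ with restrictions recorded in Section~\ref{sec2}.

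The role of the hypothesis $\one\nolimits_{\gamma}\nforces\bar{\varepsilon}^{\xi_0}_i\notin\acc^+(\nacc(\dot C^{\gamma}_{\delta_{\mathsf K}}))$ is to ensure that $\dot h^{\gamma}_{\delta_{\mathsf K}}$ has not already been trivialised, i.e.\ forced to be eventually constant, on a final segment of $[0,\bar{\varepsilon}^{\xi_0}_i)$; this is precisely the scenario that would spoil the diagonal choice of $\tau^{\gamma}_{\delta_{\mathsf K}}$ (compare the conditional clause of (b4) in Definition~\ref{defn33}). Under this hypothesis the candidate colour $\tau^{\gamma}_{\delta_{\mathsf K}}$ is in genuine competition with the set of possible values of $\dot h^{\gamma}_{\delta_{\mathsf K}}$ on $[0,\delta_{\mathsf P})$, and the diagonalisation produces the desired separation.

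The main obstacle, and the step requiring the most care, is verifying that the meet $\mathsf N_i\wedge \mathsf M$ indeed appears in the amalgamated chain $\mathcal M_t^{\xi_0}$ with the correct $\in_{\xi_0}$-position relative to $\mathsf K$, using only the ingredients provided by $p$ and $q$. This requires unwinding the construction $\mathcal M_t=\mathcal M_p\wedge \mathcal M_q$ and, in particular, invoking Fact~\ref{Timer fact} to rule out any spurious model from $\mathcal M_q$ being inserted between $\mathsf P$ and $\mathsf K$ in the $\in_{\xi_0}^*$-order once the amalgamation is formed; the assumption $\mathsf K\notin\mathcal M_q(\gamma)$ is precisely what keeps $\mathsf K$ on the $\mathsf M$-side of the amalgamation where the meet $\mathsf N_i\wedge \mathsf M$ also naturally resides.
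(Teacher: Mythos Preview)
Your proposal rests on a misreading of how $\tau^{\gamma}_{\delta_{\mathsf K}}$ arises in the situation covered by this lemma. Look again at the definition of $\tau^{\gamma}_{\delta_{\mathsf K}}$ in the construction of $t$: the diagonal choice is made only when $\gamma\in\dom(F_q)$ and $\mathsf K\in\mathcal M_p$, or when $\gamma\in\dom(F_p)$ and $\mathsf K\in\mathcal M_q$; in the remaining ``otherwise'' case the value is simply inherited, namely $\tau^{\gamma}_{\delta_{\mathsf K}}=\tau^{\gamma}_{\delta_{\mathsf K},p}$. The hypothesis of the lemma is $\mathsf K\in\pi_1(\mathcal M_p(\gamma))\setminus\mathcal M_q(\gamma)$, and the lemma is invoked (see the proofs of the two lemmas immediately following it) precisely in the case $\gamma\in\dom(F_p)\setminus\dom(F_q)$. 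There the second disjunct fails because $\mathsf K\notin\mathcal M_q(\gamma)$ and the first fails because $\gamma\notin\dom(F_q)$, so no diagonalisation takes place: $\tau^{\gamma}_{\delta_{\mathsf K}}$ is the old value coming from $p$. Your entire argument, which relies on $\tau^{\gamma}_{\delta_{\mathsf K}}$ having been freshly chosen above the $\tau^{\gamma}_{\mathsf K,\mathsf P}$'s, therefore does not apply.

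The paper's proof uses a completely different mechanism: it appeals to Clause~(b4) for the \emph{already existing} condition $p$. One checks that the countable model with height $\bar\varepsilon^{\xi_0}_i$ --- either $\mathsf M$ itself when $i=0$, or a meet $\mathsf N\wedge(\mathsf M\rest\xi_0)$ with $\mathsf N\in\pi_1(\mathcal M_{p\res\mathsf M}(\xi_0))$ when $i>0$ --- satisfies the hypotheses of (b4) relative to $\mathsf K$ in $p$ (in the first case because $\delta_{\mathsf M}\in B^{\gamma}_p$, in the second because the meet is of the right form for $\mathsf K$, using $\delta_{\mathsf M}\in B^{\gamma}_p$). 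Clause~(b4) for $p$ then yields $p\res\gamma\forces\tau^{\gamma}_{\delta_{\mathsf K}}\notin\dot h^{\gamma}_{\delta_{\mathsf K}}[\bar\varepsilon^{\xi_0}_i]$, and one transfers this to $t\res\gamma$ via $t\res\gamma\le p\res\gamma$. The hypothesis $\one_{\gamma}\nforces\bar\varepsilon^{\xi_0}_i\notin\acc^+(\nacc(\dot C^{\gamma}_{\delta_{\mathsf K}}))$ is exactly what activates (b4); it is not, as you suggest, merely a non-triviality condition on $\dot h$.
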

\begin{proof}

There are two possible cases for $\bar{\varepsilon}^{\xi_0}_i$ which may occur:
\begin{itemize}
\item[(i)] Suppose $\bar{\varepsilon}^{\xi_0}_i=\delta_\mathsf{M}$.
In this case as $\delta_\mathsf{M}\in B^{\gamma}_t$ and we have that $\delta_\mathsf{K}>\bar{\varepsilon}^{\xi_0}_i$,
by Clause (b4) applied to $p$:
$$p\res\gamma\forces\tau^{\gamma}_{\delta_\mathsf{K}}\notin h^{\gamma}_{\delta_\mathsf{K}}[\delta_\mathsf{M}].$$
By our induction hypothesis $t\res\gamma\leq p\res\gamma$,
$$t\res\gamma\forces\tau^{\gamma}_{\delta_\mathsf{K}}\notin h^{\gamma}_{\delta_\mathsf{K}}[\delta_\mathsf{M}].$$

\item[(ii)] Else $\bar{\varepsilon}^{\xi_0}_i<\delta_\mathsf{M}$.
As $\mathsf{K}\in\pi_1(\mathcal M_p(\gamma))\setminus\mathcal M_{p\res\mathsf M}(\gamma)$, again by Fact~\ref{fact 2.27} we may find $\mathsf{N}\in\pi_1(\mathcal M_{p\res \mathsf{M}}(\xi_0))$ such that $\bar{\varepsilon}^{\xi_0}_i=\delta_{\mathsf{N}\wedge (\mathsf{M}\rests\xi_0)}$.
In particular, as $\delta_{\mathsf M}\in B^{\gamma}_p$, $N\wedge(\mathsf M\rest\xi_0)$ is of the right form for $\mathsf K$.

So in this case we may apply Clause (b4) to $t\res\gamma$ and deduce:
$$t\res\gamma\forces\tau^{\gamma}_{\delta_\mathsf{K}}\notin h^{\gamma}_{\delta_\mathsf{K}}[\bar{\varepsilon}^{\xi_0}_i].$$

\end{itemize}
\end{proof}

We aim to show that $t$ is a condition extending both $p$ and $q$.
First note that the fact $t\leq p,q $ and $t$ satisfies Claues (a),(b1)--(b3) follows immediately
from the definition. Therefore
in order to accomplish this task, we will need the following lemmata:

\begin{lemma}
Let $\alpha\in E$. For $p\in\mathbb O_{\alpha}$ extending $r_\mathsf{M}$, let $q$ be appropriate for $p$.
Moreover, suppose $\gamma\in\dom(F_t)$ is such that $t\res\gamma$ is a legitimate condition extending $p\res\gamma$, $q\res\gamma$. Then $t\res(\gamma+1)$ satisfies Clause~(b4).
\end{lemma}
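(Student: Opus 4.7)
The plan is to verify Clause (b4) directly. We may assume $\gamma$ is good, for otherwise (b4) is vacuous. Fix $\mathsf{K}\in\pi_1(\mathcal M_t(\gamma))$ and $\mathsf{P}\in\pi_0(\mathcal M_t(\xi_0))$ that is $\in_{\xi_0}$-below $\mathsf{K}$, with $\delta_\mathsf{P}\in D^{\gamma}_t$, and assume both the non-accumulation hypothesis $\one\nolimits_{\gamma}\nforces\delta_\mathsf{P}\notin\acc^+(\nacc(\dot{C}^{\gamma}_{\delta_\mathsf{K}}\cap\delta_\mathsf{P}))$ and the right-form-or-$B^\gamma_t$ hypothesis. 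The goal is to show $t\res\gamma\forces\tau^{\gamma}_{\delta_\mathsf{K}}\notin\dot h^{\gamma}_{\delta_\mathsf{K}}[\delta_\mathsf{P}]$. The argument proceeds by case analysis on the origins of $\mathsf K$ and $\mathsf P$ in $\mathcal M_p$ versus $\mathcal M_q$. Note that the subcase $\mathsf{K}\in\mathcal M_q$, $\mathsf{P}\in\mathcal M_p\setminus\mathcal M_q$ does not arise: $\mathsf{K}\in\mathcal M_q\s\mathsf{M}$ combined with $\mathsf{P}\in_{\xi_0}\mathsf{K}$ pulls $\mathsf{P}$ into $\mathcal M_{p\res\mathsf{M}}\s\mathcal M_q$.

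In the two pure subcases, where both $\mathsf K$ and $\mathsf P$ belong to the same side, Clause (b4) for $p$ (respectively $q$) supplies the conclusion at $p\res\gamma$ (respectively $q\res\gamma$): the right-form/$B^\gamma$ hypothesis for $t$ descends to the corresponding hypothesis for $p$ or $q$ since neither $B^\gamma_t$ nor $D^\gamma_t$ enlarges the relevant side's data in any way that would break the hypothesis; and by the very definition of $\tau$ in $t$, the color $\tau^\gamma_{\delta_\mathsf K}$ agrees with the inherited color on that side. Therefore the forcing conclusion lifts through $t\res\gamma \leq p\res\gamma$ and $t\res\gamma \leq q\res\gamma$.

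The decisive subcase is the mixed one: $\mathsf{K}\in\mathcal M_p\setminus\mathcal M_q$ together with $\mathsf{P}\in\mathcal M_q\setminus\mathcal M_{p\res\mathsf{M}}$. Fact~\ref{fact 2.27} pinpoints $\mathsf K$ between two consecutive $\varepsilon$-heights relative to $\mathsf{M}$: there exists $\mathsf N\in\pi_1(\mathcal M_{p\res\mathsf{M}}(\xi_0))$ with $\delta_{\mathsf N\wedge\mathsf M}<\delta_\mathsf K<\delta_\mathsf N$, hence an index $i$ with $\varepsilon^{\xi_0}_i=\delta_\mathsf N$ and $\bar\varepsilon^{\xi_0}_i=\delta_{\mathsf N\wedge\mathsf M}$; choose $i$ so that additionally $\delta_\mathsf P<\bar\varepsilon^{\xi_0}_i$. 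The appropriate-ness of $q$ applied with $\gamma'=\xi_0$ yields $\delta_\mathsf P>\Lambda^{\xi_0}_i$. Contraposing Lemma~\ref{appropriate lemma} upgrades our hypothesis at $\delta_\mathsf P$ to $\one\nolimits_{\gamma}\nforces\bar\varepsilon^{\xi_0}_i\notin\acc^+(\nacc(\dot C^\gamma_{\delta_\mathsf K}))$, whereupon Lemma~\ref{crucial claim} delivers $t\res\gamma\forces\tau^{\gamma}_{\delta_\mathsf K}\notin\dot h^\gamma_{\delta_\mathsf K}[\bar\varepsilon^{\xi_0}_i]$. Since $\delta_\mathsf P<\bar\varepsilon^{\xi_0}_i$, the conclusion follows. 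The main obstacle is this mixed subcase, which requires coordinating the $\varepsilon/\Lambda$-height machinery with the contrapositive of Lemma~\ref{appropriate lemma}; the appropriate-ness of $q$ was engineered precisely to make this chain of reductions go through.
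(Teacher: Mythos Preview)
Your case analysis has two genuine gaps.

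First, you claim the subcase $\mathsf{K}\in\mathcal M_q$, $\mathsf{P}\in\mathcal M_p\setminus\mathcal M_q$ cannot occur, arguing that $\mathsf{P}\in_{\xi_0}\mathsf{K}$ together with $\mathsf{K}\in\mathsf{M}$ forces $\mathsf{P}\in\mathcal M_{p\res\mathsf M}$. This is wrong: the hypothesis is only that $\mathsf{P}$ is $\in^*_{\xi_0}$-below $\mathsf{K}$ (transitive closure), not that $\mathsf{P}\in_{\xi_0}\mathsf{K}$. A Magidor model $\mathsf{N}\in\pi_1(\mathcal M_{p\res\mathsf M}(\xi_0))$ can sit strictly between them, and then $\mathsf{P}$ may live in the interval $((\mathsf{N}\wedge\mathsf{M})\rest\xi_0,\mathsf{N})^{\xi_0}_p$, hence outside $\mathcal M_{p\res\mathsf M}$. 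The paper treats exactly this configuration (Case~2, form~$(\alpha)$) with a separate argument: since $\gamma\in\mathsf{M}$, the names $\dot\delta^{\pm}$ for the neighbouring non-accumulation points of $\dot C^{\gamma}_{\delta_{\mathsf K}}$ around $\delta_{\mathsf N}$ lie in $\mathsf{M}$, forcing $\delta^{-}<\delta_{\mathsf N\wedge\mathsf M}\le\delta_{\mathsf P}<\delta^{+}$ and hence $\one_\gamma\forces\delta_{\mathsf P}\notin\acc^+(\nacc(\dot C^{\gamma}_{\delta_{\mathsf K}}))$. None of this is covered by your dismissal.

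Second, and more seriously, your case split assumes $\mathsf{P}$ lies in $\mathcal M_p(\xi_0)$ or in $\mathcal M_q(\xi_0)$. But $\mathcal M_t$ is the closure of $\mathcal M_p\cup\mathcal M_q$ under meets, so $\pi_0(\mathcal M_t(\xi_0))$ contains new countable models of the form $\mathsf{P}=\mathsf{W}\wedge\mathsf{S}$ with $\mathsf{W}\in\pi_1(\mathcal M_q(\xi_0))$ and $\mathsf{S}\in\pi_0(\mathcal M_p(\xi_0))$. These models are in neither $\mathcal M_p$ nor $\mathcal M_q$, and the paper handles them separately (form~$(\gamma)$ in both Case~1 and Case~2) with arguments that are not subsumed by your mixed case: in Case~1 one must combine appropriateness with the precise choice of $\Lambda^{\gamma}_i$ relative to $\mathsf{S}$, and in Case~2 one needs the inequality $\delta_{\mathsf W\wedge\mathsf S}\ge\delta_{\mathsf W\wedge\mathsf M}$. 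Your proof as written does not address these models at all.

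Finally, even in your ``pure'' subcases the claim that the right-form hypothesis for $t$ descends to $p$ or $q$ is not automatic; for $\mathsf{K},\mathsf{P}\in\mathcal M_q$ the paper gives a nontrivial argument (Case~2, form~$(\beta)$) ruling out that a witness $\mathsf{R}$ to right-form in $t$ comes from $\mathcal M_p\setminus\mathcal M_q$.
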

\begin{proof} There are two cases to consider:

\underline{Case 1:} Suppose $\gamma\in\dom(F_p)\setminus\dom(F_q)$. Given $\mathsf{K}\in\pi_1(\mathcal M_t(\gamma))$ let $\mathsf{P}\in\mathcal M_t(\xi_0)$ countable with $\delta_\mathsf{P}\in D^{\gamma}_t$ which is of the right form for $\mathsf K$ or $\delta_{\mathsf P}\in B^{\gamma}_t$ First note that all the countable models in $\mathcal M_t(\xi_0)$ have the following form:
\begin{enumerate}
\item[($\alpha$)] $\mathsf{P}\in\pi_0(\mathcal M_p(\xi_0))\setminus\mathcal M_q(\xi_0)$.
\item[($\beta$)] $\mathsf{P}\in\pi_0(\mathcal M_q(\xi_0))$.
\item[($\gamma$)] $\mathsf{P}=\mathsf W\wedge\mathsf S$ where $\mathsf W\in\pi_1(\mathcal M_q(\xi_0))$ and $\mathsf S\in\pi_0(\mathcal M_p(\xi_0))$.

\end{enumerate}
Therefore there are now several cases to consider:
\begin{itemize}
\item[$\br$] Suppose $\mathsf{K}\in\pi_1(\mathcal M_q(\gamma))$. The conclusion in this case follows by our definition of $\tau^{\gamma}_{\delta_\mathsf{K}}$.

\item[$\br$] Else, $\mathsf{K}\in\pi_1(\mathcal M_p(\gamma))\setminus\mathcal M_q(\gamma)$.

Note that as $\delta_\mathsf{M}\in D^{\gamma}$, $\{\bar{\varepsilon}^{\xi_0}_i\mid i<m_p\}\s D^{\gamma}$ by Clause (b3) applied to $p$.

Now as $\mathsf{P}$ can be of three possible forms, there are three cases to consider:

$\br\br$ If $\mathsf{P}$ is of form $(\alpha)$,
then by Fact~\ref{Timer fact} $\mathsf P$ is of the right form for $\mathsf K$ in $p$.
So the conclusion holds by the fact $p$ is a condition satisfying Clause (b4).

$\br\br$ If $\mathsf{P}$ is of the form $(\beta)$, let $i<m_p$ be the last such that $\delta_p<\bar{\varepsilon}^{\xi_0}_i$. then we still have two more subcases.

$\br\br\br$ Suppose $\one\nolimits_{\gamma}\nforces\bar{\varepsilon}^{\xi_0}_i\notin\acc^+(\nacc(\dot{C}^{\gamma}_{\delta_\mathsf{K}}))$,
then as $\delta_\mathsf{P}<\bar{\varepsilon}^{\xi_0}_i$ it follows that,
$$t\res\gamma\forces \dot{h}^{\gamma}_{\delta}[\delta_\mathsf{P}]\s h^{\gamma}_{\delta}[\bar{\varepsilon}^{\xi_0}_{i}]$$
By Lemma~\ref{crucial claim}, the conclusion holds.

$\br\br\br$ Else, $\one\nolimits_{\gamma}$ forces $\bar{\varepsilon}^{\xi_0}_i\notin\acc^+(\nacc(C^{\gamma}_{\delta_\mathsf{K}})))$.
Then by Lemma~\ref{appropriate lemma} Clause (1), $\delta_\mathsf{P}$ is forced to between two consecutive elements of $\nacc(C^{\gamma}_{\delta_\mathsf{K}}))$. So, the conclusion holds trivially.

$\br\br$ Otherwise, $\mathsf{P}$ is of form $(\gamma)$.
Let $\mathsf W\in\mathcal M_q(\xi_0)$ and $\mathsf S\in\mathcal M_p(\xi_0)$ such that $\mathsf{P}=W\wedge S$.
By Fact~\ref{Timer fact}, there exists $i<m_p$ such that,
$$\varepsilon^{\xi_0}_{i+1}<\delta_\mathsf{P}<\delta_{\mathsf W}<\bar{\varepsilon}^{\xi_0}_i<\delta_{\mathsf S}<\varepsilon^{\xi_0}_i.$$

There are two cases to consider:

$\br\br\br$ If $\one\nolimits_{\gamma}\nforces\bar{\varepsilon}^{\xi_0}_i\in\acc^+(\nacc(C^{\gamma}_{\delta_\mathsf{K}}))$,
Then as $\delta_\mathsf{P}<\bar{\varepsilon}^{\xi_0}_i$, $p$ is a condition and $t\res\gamma\leq p\res\gamma$:
$$t\res\gamma\forces \dot{h}^{\gamma}_{\delta_\mathsf{K}}[\delta_\mathsf{P}]\cap\{\tau^{\gamma}_{\delta_\mathsf{K}}\}=\emptyset.$$

$\br\br\br$ Else, $\one\nolimits_{\gamma}\forces\bar{\varepsilon}^{\xi_0}_i\notin\acc^+(\nacc(C^{\gamma}_{\delta_\mathsf{K}}))$.
Since $q$ is appropriate for $p$ we have $\delta_{\mathsf W}>\Lambda^{\xi_0}_i>\bar{\Lambda}^{\xi_0}_i$.
In addition, $\delta_W\in\mathsf S$. So, by the choice of $\Lambda^{\xi_0}_i$, $\sup(\bar{\varepsilon}^{\xi_0}\cap\mathsf S)=\bar{\varepsilon}^{\xi_0}_i$.

Since we chose $\Lambda^{\gamma}_i$ to be above $\min(\mathsf S\setminus\bar{\Lambda}^{\gamma}_i)$, in particular
$\bar{\Lambda}^{\gamma}_i\leq\delta_{\mathsf{P}}$.
But, $p\res\gamma\forces \nacc(\dot{C}^{\gamma}_{\delta_\mathsf{K}})\cap[\bar{\Lambda}^{\gamma}_i,\varepsilon^{\gamma}_{i}]=\emptyset$.

Thus as $t\res\gamma\leq p\res\gamma$, the condition holds trivially.

\end{itemize}

\underline{Case 2:} Suppose $\gamma\in\dom(F_q)$.
Given $\mathsf{K}\in\pi_1(\mathcal M_t(\gamma))$, let $\mathsf{P}\in\mathcal M_t(\xi_0)$ countable with $\delta_\mathsf{P}\in D^{\gamma}$ which is the right form for $\mathsf{K}$ or $\delta_{\mathsf P}\in B^{\gamma}_t$.
We again conclude that $\mathsf{P}$ is of one of the forms ($\alpha$)--($\gamma$) above.
So again we separate into cases:
\begin{itemize}
\item[$\br$] Suppose $\mathsf{K}\in\pi_1(\mathcal M_p(\gamma))\setminus\mathcal M_q(\gamma)$.
Then the conclusion follows from the definition of $\tau^{\gamma}_\mathsf{K}$.

\item[$\br$] Else, $\mathsf{K}\in\pi_1(\mathcal M_q(\gamma))$.

$\br\br$ If $\mathsf{P}$ is of form $(\beta)$. In case $\mathsf P\in B^{\gamma}_t$ we are finished.
Thus, assume that $\mathsf P$ is of the right form for $\mathsf K$ in $t$.
We need to show that $\mathsf P$ is of the right form for $\mathsf K$ in $q$.
Suppose not. Then there exist $\mathsf S\in\pi_0(\mathcal M_q(\xi_0))$ such that $\mathsf P=\mathsf K\rest\xi_0\wedge\mathsf S$ and there are $\mathsf N\in\pi_1(\mathcal M_t(\xi_0)$, $\mathsf R\in\pi_0(\mathcal M_t(\xi)$ such that,
$$P=N\wedge R\in_{\xi_0}\mathsf K\rest\xi_0\in_{\xi_0}N\in_{\xi_0} R.$$

We may assume that $\mathsf R\in\mathcal M_p(\xi_0)\setminus\mathcal M_q(\xi_0)$ as otherwise both $\mathsf N,\mathsf R\in\mathsf M$ and thus $\mathsf P$ is of the right form for $\mathsf K$ in $q$, contradicting our assumption.
but, recall that in addition $\delta_{\mathsf R}\in B^{\gamma}_q$ so there is $\mathsf Q\in\pi_0(\mathcal M_q(\xi_0))$ such that $\delta_{\mathsf Q}=\delta_{\mathsf R}$.
This is a contradiction to the fact $\mathcal M_p(\xi_0)\cup\mathcal M_q(\xi_0)$ form an $\in_{\xi_0}$-chain.

$\br\br$ Suppose $\mathsf{P}$ is of form $(\alpha)$.
Since $\mathsf{K}$ is in $\mathsf{M}$ in particular, $\mathsf{K}$ is $\in_{\xi_0}$ below $\mathsf{M}$.
Therefore there exists $\mathsf{N}\in\pi_1(\mathcal M_{p\rests \mathsf{M}}(\xi_0))$ such that $\mathsf{P}\in_{\xi_0} \mathsf{N}\in_{\xi_0} \mathsf{K}$.
Denote $\dot{\delta}^+:=\min(\nacc(\dot{C}^{\gamma}_{\delta_\mathsf{K}})\setminus\delta_{\mathsf{N}})$, $\dot{\delta}^-:=\max(\nacc(\dot{C}^{\gamma}_{\delta_\mathsf{K}})\cap\dot{\delta}^+)$.
Since $\gamma\in \mathsf{M}$, all of the relevant objects are defined in $\mathsf{M}$ as names.
Therefore,
$$\one\nolimits_{\gamma}\forces \delta^-<\delta_{\mathsf{N}\wedge \mathsf{M}\rests\xi_0}\leq\delta_{\mathsf{P}}<\delta^+,$$
so,
$$\one\nolimits_{\gamma}\forces \delta_\mathsf{P}\notin\acc^+(\nacc(C^{\gamma}_{\delta_\mathsf{K}})).$$

$\br\br$ Else, $\mathsf{P}$ is of form $(\gamma)$.
Let $\mathsf S\in\pi_0(\mathcal M_p(\xi_0))$ and $\mathsf W\in\pi_1(\mathcal M_q(\xi_0))$.

The following is well-known.
\begin{claim}
$\delta_{\mathsf W\wedge\mathsf S}\geq\delta_{\mathsf W\wedge\mathsf M}$.
\end{claim}
\begin{proof}
We prove the claim by induction on $\mathcal M_t(\xi_0)$ with respect to $\in^{*}_{\xi_0}$.
Since $\mathsf S\notin\mathsf M$ there are two possibilities:
\begin{itemize}
\item[(i)] $\mathsf S$ in $\in_{\xi_0}$-above $\mathsf M$;
\item[(ii)] There exist $\mathsf N\in\mathcal M_{p\res\mathsf M}(\xi_0)$ such that $\mathsf S\in[\mathsf N\wedge\mathsf M,\mathsf N)^{\xi_0}_{\mathcal M_p}$.
\end{itemize}

Suppose that $\mathsf S$ satisfies (i).
In case there is no Magidor model in the interval $(\mathsf M,\mathsf S)^{\xi_0}_{\mathcal M_p}$ we have that $\mathsf M\in_{\xi_0}\mathsf S$.
Therefore as $\mathsf W\wedge\mathsf M\in_{\xi_0}\mathsf M$, $\mathsf W\wedge\mathsf M\in_{\xi_0}\mathsf S$.
Recalling that $\mathsf W\wedge\mathsf M\in_{\xi_0}\mathsf W$ the conclusion follows.

Now suppose there is a Magidor model in $(\mathsf M,\mathsf S)^{\xi_0}_{\mathcal M_p}$.
Let $\mathsf R$ be the $\in^*_{\xi_0}$ largest such model.
As $\mathsf R\in_{\xi_0}\mathsf S$, we have that $\mathsf R\wedge\mathsf S$ is defined
and $\mathsf W\in_{\xi_0}\mathsf R\wedge\mathsf S$.
Therefore by our induction hypothesis, $\delta_{\mathsf W\wedge(\mathsf R\wedge\mathsf S)}\geq\delta_{\mathsf W\wedge\mathsf M}$.
But, $\mathsf W\wedge(\mathsf R\wedge\mathsf S)=\mathsf W\wedge\mathsf S$.

In case $\mathsf S$ satisfies (ii) we have a similar proof.
\end{proof}

Denote $\dot{\delta}^+:=\min(\nacc(\dot{C}^{\gamma}_{\delta_\mathsf{K}})\setminus\delta_{W})$, $\dot{\delta}^-:=\max(\nacc(\dot{C}^{\gamma}_{\delta_\mathsf{K}})\cap\dot{\delta}^+)$.
Since $\gamma\in \mathsf{M}$, all of the relevant objects are defined in $\mathsf{M}$ as names.
Therefore,
$$\one\nolimits_{\gamma}\forces \delta^-<\delta_{W\wedge(\mathsf{M}\rests\xi_0)}\leq\delta_{\mathsf{P}}<\delta^+$$
so,
$$\one\nolimits_{\gamma}\forces \delta_\mathsf{P}\notin\acc^+(\nacc(C^{\gamma}_{\delta_\mathsf{K}})).$$

\end{itemize}
The proof is now complete.
\end{proof}
\begin{lemma}
Let $\alpha\in E$. For $p\in\mathbb O_{\alpha}$, suppose $\mathsf{M}\in\pi_0(\mathcal M_p)\cap\mathcal B_{st}$ and let $q$ be appropriate for $p$.
Suppose $\gamma\in\dom(F_t)$ is such that $t\res\gamma$ is a legitimate condition. Then $t\res(\gamma+1)$ satisfies Clause (b1) holds for $t$.
\end{lemma}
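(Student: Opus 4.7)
\emph{Plan.} Fix $\mathsf K\in\pi_1(\mathcal M_t(\gamma))$.  Since $\dot h^\gamma_{\delta_{\mathsf K}}$ is a name for a function on $\delta_{\mathsf K}$, verifying Clause~(b1) for $t\res(\gamma+1)$ reduces to showing that, for each $\delta\in\Delta_1(D^\gamma_t)\cap\delta_{\mathsf K}$, one has $t\res\gamma\forces\dot h^\gamma_{\delta_{\mathsf K}}(\delta)\neq\tau^\gamma_{\delta_{\mathsf K},t}$.  Let $\mathsf N\in\pi_1(\mathcal M_t(\xi_0))$ be the unique uncountable model with $\delta_{\mathsf N}=\delta$; since $\mathcal M_t(\xi_0)$ is an $\in_{\xi_0}$-chain and $\delta_{\mathsf N}<\delta_{\mathsf K}$, $\mathsf N$ is $\in_{\xi_0}$-below $\mathsf K$.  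I split into cases on whether $\mathsf K$ (and $\mathsf N$) originate in $\mathcal M_p$, $\mathcal M_q$, or both.

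Suppose first $\mathsf K\in\mathcal M_p(\gamma)\cap\mathcal M_q(\gamma)$, so $\mathsf K\in\mathsf M$ and the entry $\tau^\gamma_{\delta_{\mathsf K},t}$ specified by the construction agrees with the common value $\tau^\gamma_{\delta_{\mathsf K},p}=\tau^\gamma_{\delta_{\mathsf K},q}$ (as is forced upon us by $t\leq p,q$).  If $\mathsf N\in\mathcal M_p$, invoke Clause~(b1) for $p$ to obtain $p\res\gamma\forces\dot h^\gamma_{\delta_{\mathsf K}}(\delta)\neq\tau^\gamma_{\delta_{\mathsf K},p}$; if $\mathsf N\in\mathcal M_q$, the analogous statement follows from Clause~(b1) for $q$.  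Either way the inductive fact $t\res\gamma\leq p\res\gamma,q\res\gamma$ propagates the forcing to $t\res\gamma$.

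In the remaining cases $\mathsf K$ is exclusive to one side; by symmetry it suffices to treat $\mathsf K\in\pi_1(\mathcal M_p(\gamma))\setminus\mathcal M_q(\gamma)$ with $\gamma\in\dom(F_q)$.  Here the construction of $t$ sets
\[
\tau^\gamma_{\delta_{\mathsf K},t}\;=\;\min\bigl(\omega_1\setminus\bigcup\{\tau^\gamma_{\mathsf K,\mathsf P}\mid\mathsf P\in\mathcal M_t(\xi_0)\}\bigr),
\]
and in particular $\tau^\gamma_{\delta_{\mathsf K},t}>\tau^\gamma_{\mathsf K,\mathsf N}$.  By Remark~\ref{proper remark} the set of colours realizable for $\dot h^\gamma_{\delta_{\mathsf K}}(\delta_{\mathsf N})$ below $t\res\gamma$ is countable, and the defining supremum of $\tau^\gamma_{\mathsf K,\mathsf N}$ confines each such colour strictly below $\tau^\gamma_{\mathsf K,\mathsf N}$; consequently no extension of $t\res\gamma$ can force $\dot h^\gamma_{\delta_{\mathsf K}}(\delta_{\mathsf N})=\tau^\gamma_{\delta_{\mathsf K},t}$, as required.

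The main obstacle is the coherence claim invoked in the first case: when both OR-branches of the $t$-construction trigger simultaneously (e.g.\ $\mathsf K\in\mathcal M_p\cap\mathcal M_q$ with $\gamma\in\dom(F_p)\cap\dom(F_q)$), one must verify that the newly computed $\tau^\gamma_{\delta_{\mathsf K},t}$ really does coincide with $\tau^\gamma_{\delta_{\mathsf K},p}$.  The key observation is that every $\mathsf P\in\mathcal M_q(\xi_0)\setminus\mathcal M_p(\xi_0)$ lies in $\mathsf M$; appealing to properness of $\mathbb O_\gamma$ with respect to $\mathsf M$, any colour realizable for $\dot h^\gamma_{\delta_{\mathsf K}}(\delta_{\mathsf P})$ by an extension of $t\res\gamma$ is already realized by an extension of $p\res\gamma$ inside $\mathsf M$, so Clause~(b1) for $p$ pins each such colour strictly below $\tau^\gamma_{\delta_{\mathsf K},p}$, and the sup stabilises precisely at $\tau^\gamma_{\delta_{\mathsf K},p}$.
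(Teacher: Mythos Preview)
Your ``by symmetry'' reduction in the second case is where the argument breaks. You treat only the subcase $\mathsf K\in\pi_1(\mathcal M_p(\gamma))\setminus\mathcal M_q(\gamma)$ with $\gamma\in\dom(F_q)$, where the construction assigns $\tau^\gamma_{\delta_{\mathsf K},t}$ freshly and the conclusion is immediate. But this is \emph{not} symmetric to the subcase $\mathsf K\in\pi_1(\mathcal M_p(\gamma))\setminus\mathcal M_q(\gamma)$ with $\gamma\in\dom(F_p)\setminus\dom(F_q)$: there neither branch of the fresh-assignment clause fires, so $\tau^\gamma_{\delta_{\mathsf K},t}=\tau^\gamma_{\delta_{\mathsf K},p}$ is \emph{inherited}, yet new uncountable models $\mathsf N\in\mathcal M_q(\gamma)\setminus\mathcal M_p(\gamma)$ can appear below $\mathsf K$. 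Clause~(b1) for $p$ says nothing about such $\mathsf N$, and your fresh-$\tau$ argument is unavailable. This is precisely the case for which the whole appropriateness apparatus was built: the paper picks the last $i$ with $\delta_{\mathsf N}<\bar\varepsilon^{\xi_0}_i$ and splits on whether $\one_\gamma$ forces $\bar\varepsilon^{\xi_0}_i\notin\acc^+(\nacc(\dot C^\gamma_{\delta_{\mathsf K}}))$. If not, Lemma~\ref{crucial claim} (which ultimately rests on Clause~(b4) for $p$) gives $t\res\gamma\forces\tau^\gamma_{\delta_{\mathsf K}}\notin\dot h^\gamma_{\delta_{\mathsf K}}[\bar\varepsilon^{\xi_0}_i]\supseteq\dot h^\gamma_{\delta_{\mathsf K}}[\delta_{\mathsf N}]$; if so, appropriateness of $q$ via Lemma~\ref{appropriate lemma} forces $\delta_{\mathsf N}\notin\nacc(\dot C^\gamma_{\delta_{\mathsf K}})$, and the clause holds vacuously.

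The dual subcase $\mathsf K\in\pi_1(\mathcal M_q(\gamma))\setminus\mathcal M_p(\gamma)$ with $\gamma\in\dom(F_q)$ and $\mathsf N\in\mathcal M_p(\gamma)\setminus\mathcal M_q(\gamma)$ is also not covered by your symmetry and needs its own argument: since $\mathsf K\in\mathsf M$, one finds $\mathsf N'\in\pi_1(\mathcal M_{p\res\mathsf M}(\gamma))$ with $\mathsf N\in_\gamma\mathsf N'\in_\gamma\mathsf K$, and then the names $\dot\delta^\pm$ for the neighbours of $\delta_{\mathsf N'}$ in $\nacc(\dot C^\gamma_{\delta_{\mathsf K}})$ lie in $\mathsf M$, sandwiching $\delta_{\mathsf N}$ strictly between them. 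Your final paragraph on ``coherence'' is chasing a phantom: the construction never recomputes $\tau^\gamma_{\delta_{\mathsf K}}$ when $\mathsf K$ already carries a value from $p$ or $q$, so no consistency check is needed there---the real work is entirely in the inherited-$\tau$ cases you skipped.
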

\begin{proof}
There are two main cases to consider:

\underline{Case 1:} Suppose $\gamma\in\dom(F_p)\setminus\dom(F_q)$.

\begin{itemize}
\item[$\br$] Suppose $\mathsf{K}\in\pi_1(\mathcal M_q(\gamma))$. In this case the conclusion in this case follows by the definition of $\tau^{\gamma}_{\delta_\mathsf{K}}$.

\item[$\br$] Else, $\mathsf{K}\in\pi_1(\mathcal M_p(\gamma))\setminus\mathcal M_q(\gamma)$. Let $\mathsf{P}\in\pi_1(\mathcal M_t(\gamma))$ which is $\in_{\gamma}$-below $\mathsf{K}$.

$\br\br$ If $\mathsf{P}\in\mathcal M_p(\gamma)$, then $p\res\gamma\forces h^{\gamma}_{\delta_\mathsf{K}}(\delta_\mathsf{P})\neq\tau^{\gamma}_\mathsf{K}$ by the fact Clause (b1) holds for $p$.

$\br\br$ If $\mathsf{P}\in\mathcal M_q(\gamma)$.
Going back to $\xi_0$ as $\delta_{\mathsf{P}\res\xi_0}=\delta_\mathsf{P}$, we may take $i<m_p$ to be the last such that $\delta_\mathsf{P}<\bar{\varepsilon}^{\xi_0}_i$.

Note that, as $\delta_\mathsf{M}\in D^{\gamma}$, $\bar{\varepsilon}^{\xi_0}_i\in D^{\gamma}$ by Clause (b3) applied to $t\res\gamma$.
Now there are two cases to consider:

$\br\br\br$ If $\one\nolimits_{\gamma}\nforces\bar{\varepsilon}^{\xi_0}_i\notin\acc^+(\nacc(\dot{C}^{\gamma}_{\delta_\mathsf{K}}))$.
Then by Lemma~\ref{crucial claim} as $\delta_\mathsf{P}<\bar{\varepsilon}^{\xi_0}_i$ we get,
$$t\res\gamma\forces h^{\gamma}_{\delta_\mathsf{K}}[\delta_{\mathsf{P}}]\cap\{\tau^{\gamma}_{\delta_\mathsf{K}}\}=\emptyset.$$

$\br\br\br$ Else, $\one\nolimits_{\gamma}\forces\bar{\varepsilon}^{\xi_0}_i\notin\acc^+(\nacc(\dot{C}^{\gamma}_{\delta_\mathsf{K}}))$.

Then as $q$ is appropriate for $p$, by Lemma~\ref{appropriate lemma} Clause (1),
$t\res\gamma\forces\delta_\mathsf{P}\notin\nacc(\dot{C}^{\gamma}_{\delta_{\mathsf K}})$.
\end{itemize}

\underline{Case 2:} Suppose $\gamma\in\dom(F_q)\setminus\beta$.
\begin{itemize}
\item[$\br$] Suppose $\mathsf{K}\in\pi_1(\mathcal M_p(\gamma))$. The conclusion in this case follows by the definition of $\tau^{\gamma}_{\delta_\mathsf{K}}$.

\item[$\br$] Else, $\mathsf{K}\in\pi_1(\mathcal M_q(\gamma))\setminus\mathcal M_p(\gamma)$. Let $\mathsf{P}\in\pi_1(\mathcal M_t(\gamma))$ below $\mathsf{K}$.

$\br\br$ If $\mathsf{P}\in\mathcal M_q(\gamma)$, then the conclusion holds by the fact $q$ is a condition.

$\br\br$ Else, $\mathsf{P}\in\mathcal M_p(\gamma)\setminus\mathcal M_q(\gamma)$.
Then there exists $\mathsf{N}\in\pi_1(\mathcal M_{p\res \mathsf{M}}(\gamma))$ such that, $\mathsf{P}\in_{\gamma} \mathsf{N}\in_{\gamma} \mathsf{K}$.
Because $\gamma\in \mathsf{M}$, note that $(\mathsf{N}\wedge \mathsf{M})\res\gamma\in\mathcal M_p(\gamma)$.
So, $\mathsf{N}\wedge \mathsf{M}\in_{\gamma} \mathsf{P}$ since $\mathsf{P}\notin\mathcal M_q(\gamma)$.
Let $\dot{\delta}^+:=\min(C^{\gamma}_{\delta_\mathsf{K}}\setminus\delta_\mathsf{N})$ and $\dot{\delta}^-$
be the predecessor of $\dot{\delta}^+$ in $C^{\gamma}_{\delta_\mathsf{K}}$.
Note that, as $\mathsf{N}, \mathsf{K}\in \mathsf{M}$, $\dot{\delta}^+\in \mathsf{M}$ and $\dot{\delta}^-\in \mathsf{N}\cap \mathsf{M}\cap V_{\kappa}$.
So again we get $t\res\gamma\forces\delta_\mathsf{P}\notin\nacc(\dot{C}^{\gamma}_{\delta_\mathsf{K}})$. \qedhere
\end{itemize}
\end{proof}

The following corollary follows immediately by the lemmata above.
\begin{cor}
Let $\alpha\in E$. For $r\in\mathbb O_{\alpha}$, suppose $\mathsf{M}\in\pi_0(\mathcal M_p)\cap\mathcal B^{\alpha}_{st}$ such that $r\in \mathsf{M}$. Let $p\leq r_\mathsf{M}$ and $q$ be appropriate condition for $p$.
Then $p$ and $q$ are compatible. \qed
\end{cor}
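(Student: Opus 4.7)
The plan is to take as our candidate common extension the triple $t=\langle\mathcal M_t,d_t,F_t\rangle$ constructed prior to Lemma on Clause~(b4) (using the inductive hypothesis to amalgamate $p\res\beta$ and $q\res\beta$ via $\bar t$, and taking $\mathcal M_t:=\mathcal M_p\wedge\mathcal M_q$, $d_t:=d_p\cup d_q$, with $F_t$, $B^\gamma_t$, $D^\gamma_t$ and $\tau^\gamma_{\delta_{\mathsf K}}$ defined exactly as in the preamble to that lemma). The entire content of the corollary is to verify that $t$ is a legitimate element of $\mathbb O_\alpha$ extending both $p$ and $q$.

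First I would verify the easy part: $t\le p,q$ is immediate from the definition of the ordering, since the components of $F_t$ over $\dom(F_p)\cup\dom(F_q)$ were designed exactly to end-extend the corresponding components of $F_p$ and $F_q$, and $\langle\mathcal M_t,d_t\rangle\le\langle\mathcal M_p,d_p\rangle,\langle\mathcal M_q,d_q\rangle$ by the corresponding fact about $\mathbb P^\kappa_\lambda$ (together with Fact~\ref{fact221}). Clause~(a) holds by the same reference, and Clauses~(b1)--(b3) in their weaker, literal form are immediate from the construction of the $D^\gamma_t$ and $B^\gamma_t$ (each contains the union of the two sides, and each $\tau^\gamma_{\delta_{\mathsf K}}$ was defined to dodge a fixed countable bundle of forced values).

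The core of the proof is a straightforward induction on $\gamma\in\dom(F_t)$, in increasing order, to show that $t\res(\gamma+1)$ is a condition. For the base stage, I would apply the induction hypothesis on $\alpha$: since $\beta=\max(\dom(F_p)\cap\mathsf M)$, the condition $\bar t\le q\res\beta,p\res\beta$ is already known to be legitimate, and $t\res\beta=\bar t$ by construction. For the inductive step, given that $t\res\gamma$ is a condition extending $p\res\gamma$ and $q\res\gamma$, the two lemmata preceding this corollary do exactly the required work: one shows that $t\res(\gamma+1)$ inherits Clause~(b4), and the other shows that it inherits Clause~(b1), in both cases handling separately the subcases $\gamma\in\dom(F_p)\setminus\dom(F_q)$ and $\gamma\in\dom(F_q)$, and within each the three possible forms $(\alpha),(\beta),(\gamma)$ of a countable $\mathsf P\in\mathcal M_t(\xi_0)$.

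The main obstacle has already been absorbed into those two lemmata, whose proofs depend crucially on (i) Lemma~\ref{appropriate lemma}, which converts appropriateness of $q$ into a forced statement that any new $\delta_{\mathsf P}$ sitting below a relevant $\bar\varepsilon^{\xi_0}_i$ avoids $\acc^+(\nacc(\dot C^\gamma_{\delta_{\mathsf K}}))$; (ii) Lemma~\ref{crucial claim}, which handles the opposite case by pulling the question back to a model of the form $\mathsf N\wedge(\mathsf M\res\xi_0)$ witnessed via Fact~\ref{fact 2.27}; and (iii) the timer-style Fact~\ref{Timer fact}, which controls where the genuinely new ordinals in $\mathcal M_t$ can land relative to the $\varepsilon$- and $\bar\varepsilon$-heights. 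Once these three tools are in place, the induction goes through with no further combinatorial work, so the only thing to check at each step is the bookkeeping ensuring that the new $\gamma$ falls into one of the cases treated in the two preceding lemmata. Chaining these verifications across all finitely many $\gamma\in\dom(F_t)$ yields that $t$ is a condition, hence witnesses the compatibility of $p$ and $q$.
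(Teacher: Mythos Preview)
Your proposal is correct and matches the paper's approach exactly: the paper constructs the same $t$ in the preamble, notes that $t\le p,q$ and Clauses~(a), (b2)--(b3) are immediate, and then states the corollary as an immediate consequence of the two preceding lemmata handling Clauses~(b4) and~(b1) by induction on $\gamma\in\dom(F_t)$.
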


Putting everything together, we have verified the countable properness of our forcing.
\begin{thm}
Let $\alpha\in E$. Suppose $\mathsf{M}\in\mathcal B_{st}$ with $\eta(\mathsf{M})>\alpha$ and $\alpha\in \mathsf{M}$. Let $r$ be a condition in $\mathsf{M}$. then, $r_\mathsf{M}$ is $(\mathsf{M},\mathbb O_{\alpha})$-generic.

\end{thm}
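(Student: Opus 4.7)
The plan is to unfold the definition of $(\mathsf M,\mathbb O_\alpha)$-genericity: given an arbitrary extension $p\le r_{\mathsf M}$ and an arbitrary dense open set $D\in\mathsf M$, we must locate $q\in D\cap\mathsf M$ compatible with $p$. Since the amalgamation $t$ constructed earlier in the subsection is already defined verbatim for any $p\le r_{\mathsf M}$ and any $q\in\mathbb O_\alpha\cap\mathsf M$ that extends $p\res\mathsf M$ and is appropriate for $p$, the whole argument reduces to producing such a $q$ and verifying that the $t$ so built is a legitimate condition of $\mathbb O_\alpha$.

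First I would invoke Lemma~\ref{finding appropriate}, applied to the dense set $D\cap\{q'\mid q'\le p\res\mathsf M\}$ (which lies in $\mathsf M$ by elementarity), to obtain $q\in D\cap\mathsf M$ which extends $p\res\mathsf M$ and is appropriate for $p$. I would then form the amalgamation $t=\langle\mathcal M_t,d_t,F_t\rangle$ exactly as specified in the subsection, recalling $\beta=\max(\dom(F_p)\cap\mathsf M)$ and noting that the side-condition component $\langle\mathcal M_t,d_t\rangle$ is a legitimate condition of $\mathbb P^\kappa_\alpha$ extending both $\langle\mathcal M_p,d_p\rangle$ and $\langle\mathcal M_q,d_q\rangle$ by \cite[Lemma~3.8]{MohammadpourVelickovic}, so Clause~(a) and Clauses (b1)--(b3) of Definition~\ref{defn33} are immediate from the construction.

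The substantive task is to check that Clause~(b4), together with Clause~(b1), holds at every $\gamma\in\dom(F_t)$. I would do this by induction on $\gamma\in\dom(F_t)$, using the outer induction hypothesis on $\alpha$ in an essential way. For $\gamma\in\dom(F_t)\cap\beta$ the piece $F_t\res\beta$ coincides with $F_{\bar t}$, and the inductive hypothesis on $\mathbb O_\beta$ guarantees that $\bar t$ is a condition extending $p\res\beta$ and $q\res\beta$, so Clauses (b1) and (b4) below $\beta$ are inherited from $\bar t$. For $\gamma\in\dom(F_t)\setminus\beta$ I would invoke the two preceding lemmas of this subsection verbatim: the first lemma shows that if $t\res\gamma$ is a legitimate condition extending $p\res\gamma$ and $q\res\gamma$, then $t\res(\gamma+1)$ satisfies Clause~(b4), and the second shows the same for Clause~(b1). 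The case analysis in those lemmas is the place where the appropriateness of $q$, Fact~\ref{Timer fact}, Lemma~\ref{appropriate lemma}, and the ``crucial claim'' (Lemma~\ref{crucial claim}) are consumed; I do not need to redo it.

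Putting these ingredients together, $t$ is a condition of $\mathbb O_\alpha$, and by its very definition $t\le p$ and $t\le q$, so $p$ and $q$ are compatible. Since $q\in D\cap\mathsf M$, this exhibits $r_{\mathsf M}$ as an $(\mathsf M,\mathbb O_\alpha)$-generic condition, completing the inductive step and hence the theorem. The main obstacle in this argument is not the assembly itself, but the verification that the $F_t$-component satisfies (b1) and (b4) at stages $\gamma\ge\beta$; this is precisely what the two technical lemmas and the crucial claim have been engineered to bypass, so once one accepts those, the present theorem is a clean amalgamation statement.
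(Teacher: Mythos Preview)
Your proposal is correct and follows essentially the same route as the paper: produce an appropriate $q\in D\cap\mathsf M$ via Lemma~\ref{finding appropriate}, then quote the amalgamation construction and the two technical lemmas (packaged in the paper as the Corollary) to see that the resulting $t$ is a condition below both $p$ and $q$, all inside the outer induction on $\alpha\in E$. One small slip: the set $D\cap\{q'\mid q'\le p\res\mathsf M\}$ is not dense but only dense below $p\res\mathsf M$; the paper handles this by first extending $p$ into $D$ and then applying Lemma~\ref{finding appropriate} to $D$ itself, which already returns a $q\le p\res\mathsf M$.
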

\begin{proof}
We proceed by induction on $E$.
Note that $\mathbb O_{\min(E)}$ is $\mathbb B$-strongly proper.
Suppose now the conclusion holds for any ordinal in $E\cap\alpha$.
Suppose that $D\s\mathbb O_{\alpha}$ dense belonging to $\mathsf{M}$.
Let $p\leq r_\mathsf{M}$ be a condition in $D$.
By Lemma~\ref{finding appropriate} we may find $q\in D\cap \mathsf{M}$ extending $p\res \mathsf{M}$ which is appropriate for $p$.
By the Corollary above we are finished.\qedhere
\end{proof}

\subsection{Uncountable properness}

In this subsection we prove that $\mathbb O_{\alpha}$ is $\kappa$-proper by induction on $\alpha\in E$. We first introduce our candidate for a master condition.

Given $\alpha\in E$, for $p\in\mathbb O_{\alpha}$, let $\mathsf{M}\in\mathcal U^{\kappa}_{st}$ such that $p\in \mathsf{M}$.

Define $p_\mathsf{M}:=\langle \mathcal M_{p_\mathsf{M}},d_{p_\mathsf{M}},F_{p_\mathsf{M}}\rangle$ where:
\begin{itemize}
\item[$\br$] $\dom(F_{p_\mathsf{M}})=\dom(F_p)$;
\item[$\br$] For $\gamma\in\dom(F_{p_\mathsf{M}})$ if $\gamma$ is good, then set:
$$F_{r_\mathsf{M}}(\gamma):=(B^\gamma,\langle\tau^{\gamma}_{\delta}\mid\delta\in\Delta_1(D^{\gamma})\rangle),$$
where:

$\br\br$ $B^{\gamma}:=B^{\gamma}_p$.

$\br\br$ $D^{\gamma}:=D^{\gamma}_p\cup\{\delta_\mathsf{M}\}$.

$\br\br$ For $\gamma\in\dom(F_p)$, given $\mathsf{P}\in\pi_0(\mathcal M_p(\xi_0))$,
define
$$\tau^{\gamma}_{\mathsf{P}}:=\sup\{\tau<\omega_1\mid\exists q\in\mathbb O_{\gamma}(q\leq p_\mathsf{M}\res\gamma)\,\&\,(q\forces\dot{h}^{\gamma}_{\delta}[\delta_\mathsf{P}]\s\tau)\} $$

and for $\mathsf{N}\in\pi_1(\mathcal M_p(\xi_0))$,
$$\tau^{\gamma}_{\mathsf{N}}:=\sup\{\tau<\omega_1\mid\exists q\in\mathbb O_{\gamma}(q\leq p_\mathsf{M}\res\gamma)\,\&\,(q\forces\dot{h}^{\gamma}_{\delta}(\delta_\mathsf{N})\in\tau)\}$$

We already showed that $\mathbb O_{\gamma}$ is proper for every $\gamma\in E$, by Remark~\ref{proper remark} both $\tau^{\gamma}_{\mathsf{K},\mathsf{P}}$ and $\tau^{\gamma}_{\mathsf{K},\mathsf{N}}$ are countable ordinals.

Then, define $\tau^{\gamma}_{\delta_\mathsf{M}}:=\min(\omega_1\setminus\bigcup\{\tau^{\gamma}_\mathsf{P},\tau^{\gamma}_\mathsf{N}\mid \mathsf{N},\mathsf{P}\in\mathcal M_p(\gamma)\})$.
\end{itemize}

\begin{lemma}
Let $\alpha\in E$ and $\mathsf{M}\in\mathcal U_{st}$ with $p\in \mathsf{M}\cap\mathbb O_{\alpha}$. Then $p_\mathsf{M}$ is a legitimate condition extending $p$.
\end{lemma}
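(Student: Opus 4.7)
The plan is to verify that $p_\mathsf{M}$ satisfies Clauses (a), (b1)--(b4) of Definition~\ref{defn33} and that $p_\mathsf{M}\leq p$. First, the pair $\langle\mathcal M_{p_\mathsf{M}},d_{p_\mathsf{M}}\rangle$ is obtained from $\langle\mathcal M_p,d_p\rangle$ by the standard Velickovic--Mohammadpour procedure for adjoining a standard $\kappa$-Magidor model, and hence lies in $\mathbb P^\kappa_\alpha$ and extends $\langle\mathcal M_p,d_p\rangle$ by Fact~\ref{fact234}; this handles (a). Clause (b1) splits into two cases: for $\mathsf{K}\in\pi_1(\mathcal M_p(\gamma))$, since $\mathsf{K}\in\mathsf{M}$ one has $\delta_\mathsf{K}<\delta_\mathsf{M}$, so $\dot h^\gamma_{\delta_\mathsf{K}}$ names a function on $\delta_\mathsf{K}$ only and the new element $\delta_\mathsf{M}$ is irrelevant, reducing (b1) to the statement for $p$; for $\mathsf{K}=\mathsf{M}$ and $\mathsf{N}\in\pi_1(\mathcal M_p(\gamma))$ with $\delta_\mathsf{N}\in D^\gamma_p$, the choice of $\tau^\gamma_{\delta_\mathsf{M}}$ strictly above $\tau^\gamma_\mathsf{N}$ ensures no extension of $p_\mathsf{M}\res\gamma$ can force $\dot h^\gamma_{\delta_\mathsf{M}}(\delta_\mathsf{N})=\tau^\gamma_{\delta_\mathsf{M}}$ (otherwise $\tau^\gamma_\mathsf{N}\geq\tau^\gamma_{\delta_\mathsf{M}}+1$, a contradiction). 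Clause (b2) is immediate since the only new element of $\Delta_1(\mathcal M_{p_\mathsf{M}}(\gamma))$ is $\delta_\mathsf{M}$, which was added to $D^\gamma_{p_\mathsf{M}}$. For (b3), since $p\in\mathsf{M}$, every countable $\mathsf{M}'\in\pi_0(\mathcal M_p(\xi_0))$ satisfies $\mathsf{M}'\in_{\xi_0}\mathsf{M}$, so the clause with $\mathsf{N}=\mathsf{M}$ is vacuous, and otherwise reduces to (b3) for $p$.

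Clause (b4) will be the main obstacle. For $\mathsf{K}=\mathsf{M}$ and any $\mathsf{P}\in\pi_0(\mathcal M_{p_\mathsf{M}}(\xi_0))$ of the right form or with $\delta_\mathsf{P}\in B^\gamma_{p_\mathsf{M}}$, we apply the definition of $\tau^\gamma_{\delta_\mathsf{M}}$ directly: if some $q\leq p_\mathsf{M}\res\gamma$ forced $\tau^\gamma_{\delta_\mathsf{M}}\in\dot h^\gamma_{\delta_\mathsf{M}}[\delta_\mathsf{P}]$, then no $\tau<\tau^\gamma_{\delta_\mathsf{M}}+1$ would be an upper bound achievable by $q$, forcing $\tau^\gamma_\mathsf{P}\geq\tau^\gamma_{\delta_\mathsf{M}}+1$ and contradicting the construction. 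For $\mathsf{K}\in\pi_1(\mathcal M_p(\gamma))$ and $\mathsf{P}\in\pi_0(\mathcal M_p(\xi_0))$, the key check is that if $\mathsf{P}$ is of the right form for $\mathsf{K}$ in $p_\mathsf{M}$, then it is already of the right form for $\mathsf{K}$ in $p$; since $\pi_1(\mathcal M_{p_\mathsf{M}}(\gamma))=\pi_1(\mathcal M_p(\gamma))\cup\{\mathsf{M}\}$, any witnessing $\mathsf{N}$ in $\text{form}(\mathsf{K})$ is either in $\mathcal M_p$ or is $\mathsf{M}$; the latter case would force $\mathsf{K}\rest\xi_0\in_{\xi_0}\mathsf{M}$, which contradicts the fact that $\mathsf{K}\in\mathsf{M}$ places $\mathsf{K}$ strictly $\in_{\xi_0}$-below $\mathsf{M}$. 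Any new countable $\mathsf{P}=\mathsf{N}\wedge\mathsf{M}$ with $\mathsf{N}\in\pi_1(\mathcal M_p(\xi_0))$ introduced by the closure yields $\dot h^\gamma_{\delta_\mathsf{K}}[\delta_\mathsf{P}]\subseteq\dot h^\gamma_{\delta_\mathsf{K}}[\delta_\mathsf{N}]$, and the bound from (b4) for $p$ applied to $\mathsf{N}$ transfers.

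Finally, the well-definedness of $\tau^\gamma_{\delta_\mathsf{M}}$ as a countable ordinal rests on the induction hypothesis that $\mathbb O_\gamma$ is proper, via Remark~\ref{proper remark}: for each countable $\mathsf{P},\mathsf{N}\in\mathcal M_p$ the possible values of $\dot h^\gamma_{\delta_\mathsf{M}}$ pointwise beneath extensions of $p_\mathsf{M}\res\gamma$ form a countable set, whose supremum is a countable ordinal; finiteness of $\mathcal M_p$ then gives $\tau^\gamma_{\delta_\mathsf{M}}<\omega_1$. The comparison $p_\mathsf{M}\leq p$ is immediate from the construction, as $\dom(F_{p_\mathsf{M}})=\dom(F_p)$, $B^\gamma_{p_\mathsf{M}}\supseteq B^\gamma_p$, $D^\gamma_{p_\mathsf{M}}\supseteq D^\gamma_p$, the colors agree on $\Delta_1(D^\gamma_p)$, and the underlying $\mathbb P^\kappa_\alpha$-comparison is inherited from the Velickovic--Mohammadpour closure.
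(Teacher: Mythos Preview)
Your overall case split for (b4)---either $\mathsf K=\mathsf M$ or $\mathsf K\in\pi_1(\mathcal M_p(\gamma))$---is exactly what the paper does, and your treatment of $\mathsf K=\mathsf M$ via the definition of $\tau^\gamma_{\delta_\mathsf M}$ is fine. But you over-complicate the second case in a way that imports reasoning from the countable situation and ends up with two confused passages.

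First, when $\mathsf M\in\mathcal U^\kappa_{\mathrm{st}}$ and $p\in\mathsf M$, adjoining $\mathsf M$ simply places a Magidor model on top of the chain: no new countable models are created, so $\pi_0(\mathcal M_{p_\mathsf M}(\xi_0))=\pi_0(\mathcal M_p(\xi_0))$, and $B^\gamma_{p_\mathsf M}=B^\gamma_p$ by construction. Hence for $\mathsf K\in\pi_1(\mathcal M_p(\gamma))$, Clauses (ii) and (iii) of (b4) are literally the same in $p_\mathsf M$ as in $p$, and the only possible change to $\mathrm{form}(\mathsf K)$ would come from taking $\mathsf N=\mathsf M$. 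But that requires a countable $\mathsf S\in\pi_0(\mathcal M_{p_\mathsf M}(\gamma))$ with $\mathsf M\in_{\xi_0}\mathsf S$; since every such $\mathsf S$ lies in $\mathsf M$ and hence $\in_{\xi_0}$-below $\mathsf M$, no such $\mathsf S$ exists. That is the correct obstruction---not the sentence you wrote, which asserts that $\mathsf K\rest\xi_0\in_{\xi_0}\mathsf M$ contradicts $\mathsf K$ being $\in_{\xi_0}$-below $\mathsf M$; those are the same statement, so no contradiction is produced.

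Second, your final sentence about ``any new countable $\mathsf P=\mathsf N\wedge\mathsf M$ with $\mathsf N\in\pi_1(\mathcal M_p(\xi_0))$'' does not apply here: the meet is only defined between a Magidor model and a countable model, and both $\mathsf N$ and $\mathsf M$ are uncountable in your sentence. No such $\mathsf P$ arises. This is exactly why the paper dispatches the $\mathsf K\in\mathcal M_p(\gamma)$ case in a single line: everything relevant is unchanged from $p$, so (b4) for $p_\mathsf M$ reduces immediately to (b4) for $p$.
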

\begin{proof}
First let us show that $p_\mathsf M$ is a condition.
Clauses~(a) and (b1)--(b3) are clear from the definition of $p_{\mathsf M}$.
Therefore it is only left to show that $p_{\mathsf M}$ satisfies Clause (b4).
But for $\gamma\in\dom(F_{p_\mathsf M})$, $\mathsf K\in\pi_1(\mathcal M_{p_\mathsf M}(\gamma))$ and $\mathsf P\in\pi_0(\mathcal M_{p_\mathsf M}(\xi_0))$ as in the condition of Clause (b4), we have that either $\mathsf K=\mathsf M$,
which in this case the conclusion follows from the definition of $\tau^{\gamma}_{\mathsf M}$,
or $\mathsf K\in\mathcal M_r(\gamma)$ and then the conclusion follows
from the fact $r$ is a condition.

Secondly, note that the fact $p_M\leq p$ is immediate from the definition.
\end{proof}

Now, to see that $p_\mathsf{M}$ is indeed a master condition, given a dense set $D\in \mathsf{M}$ suppose $p\in D$ is some condition extending $p_\mathsf{M}$.
We shall find $q\in \mathsf{M}\cap D$ which extends $p\res \mathsf{M}$ and is compatible with $p$.
In order to establish that, we use an adaptation to argument from \cite{MR3356931}:

Let $\beta:=\dom(F_p)\cap\mathsf M$.
Note that $\beta\in\mathsf M$ and $\beta<\alpha$.
For each $\gamma\in\dom(F_p)\cap\beta$, set $\epsilon_{\gamma}:=\delta_{\mathsf{M}^{\gamma}}$ where $\mathsf{M}^{\gamma}$ is the predecessor of $\mathsf{M}$ in $\mathcal M_p(\gamma)$.
In addition fix an enumeration $\langle\gamma_i\mid i<n\rangle$ of $\dom(F_p)\cap\beta$.
Define $g:[\omega_2]^n\rightarrow[\omega_2]^{n}$ as follows:
for each tuple $\vec\tau=\langle\tau_i\mid i<n\rangle\in[\omega_2]^n$ such that $\tau_i>\epsilon_{\gamma_i}$ for every $i<n$,
let $g(\vec{\tau})$ be the least tuple $\langle\xi_i\mid i<n\rangle $ such that there exist a condition $q\in D$ such that,
\begin{enumerate}
\item $q\leq p\res \mathsf{M}$;
\item $q\res\beta=p\res\beta$;
\item for each $i<n$, $\xi_i>\tau_i$;
\item $\xi_i$ is the least ordinal in $\Delta(\mathcal M_q(\gamma_i))$ strictly above $\epsilon_{\gamma_i}$
\end{enumerate}

Note that $g$ is definable in $\mathsf{M}$.
Consider $C:=\{\eta<\omega_2\mid \bigcup g``[\eta]^n\s\eta\}$
which is a club and is also definable in $\mathsf{M}$.
As $\one_{\gamma}\forces\otp(C\cap\delta_{\mathsf{M}})=\delta_\mathsf{M}$, we may find some $\eta\in C\cap S\cap \mathsf{M}$ and $\tau<\eta$ such that,
$$\one\nolimits_{\gamma}\forces [\tau,\eta]\cap\bigcup\{\dot{C}^{\gamma'}_{\delta_\mathsf{K}}\mid \mathsf{K}\in\pi_1(\mathcal M_p(\gamma)),\gamma'\in\dom(F_p)\cap\gamma\}=\emptyset.$$

So, we may find a finite sequence $\vec{\tau}$ whose minimal element above $\max\{\xi_i\mid i<n\}$ and all elements in the interval $[\tau,\eta]$, together with a condition $q\in D\cap \mathsf{M}$ satisfying Clauses (1)--(4) above.
Thus, we may conclude that $\mathcal M_q(\gamma)\cap[\delta_{\mathsf{N}^{\gamma}},\delta_\mathsf{N}]\cap\bigcup_{\mathsf{K}\in\mathcal M_p(\gamma)}\dot{C}^{\gamma}_{\delta_\mathsf{K}}$ is forced to be empty, where $\mathsf{N}^{\gamma}$ is the $\in_{\gamma}$ predecessor of $\mathsf{N}$ in $\mathcal M_p(\gamma)$.
Moreover, Note that $h^{\gamma}_{\delta_\mathsf{M}}[\Delta(\mathcal M_q(\gamma))\cap[\delta_{\bar{\mathsf{M}}},\delta_{\mathsf{M}}]]$ is a constant which is distinct from $\tau^{\gamma}_{i_{\gamma}}$.

We define $t:=\langle\mathcal M_t,d_t,F_t\rangle$ as follows:
\begin{enumerate}
\item $\langle\mathcal M_t,d_t\rangle:=\langle\mathcal M_p,d_p\rangle\wedge\langle\mathcal M_q,d_q\rangle$.
\item $\dom(F_t)=\dom(F_p)\cup\dom(F_q)$.
\item For each $\gamma\in\dom(F_t)$, $F_t(\gamma):=(B^\gamma_t,\langle\tau^{\gamma}_\delta\mid\delta\in\Delta_1(D^{\gamma}_t)\rangle)$ where:
\begin{itemize}
\item[$\br$] $B^\gamma_t:=\begin{cases}B^{\gamma}_q&\gamma\in\dom(F_q);\\
B^{\gamma}_p& \gamma\notin\dom(F_q).
\end{cases}$

\item[$\br$] $D^{\gamma}_t:=D^{\gamma}_p\cup D^{\gamma}_q\cup\{\delta_{\mathsf W\wedge\mathsf S}\mid W\in\pi_1(\mathcal M_t),\, \mathsf S\in\pi_0(\mathcal M_t)\}$
\item[$\br$] For $\gamma\in\dom(F_q)$ and $\mathsf{K}\in\pi_1(\mathcal M_p(\gamma))$ or $\gamma\in\dom(F_p)$ and $\mathsf{K}\in\mathcal M_q(\gamma)$, given $\mathsf{P}\in\pi_0(\mathcal M_t(\xi_0))$ $\in_{\xi_0}$-below $\mathsf{K}$
set
$$\tau^{\gamma}_{\mathsf{K},\mathsf{P}}:=\sup\{\tau<\omega_1\mid\exists t'\in\mathbb O_{\gamma}(t'\leq t\res\gamma)\,\&\,(t'\forces\dot{h}^{\gamma}_{\delta}[\delta_\mathsf{P}]\s\tau)\} $$

and for $\mathsf{N}\in\pi_1(\mathcal M_t(\xi_0))$ $\in_{\xi_0}$-below $\mathsf{K}$,
$$\tau^{\gamma}_{\mathsf{K},\mathsf{N}}:=\sup\{\tau<\omega_1\mid\exists t'\in\mathbb O_{\gamma}(t'\leq t\res\gamma)\,\&\,(t'\forces\dot{h}^{\gamma}_{\delta}(\delta_\mathsf{N})\in\tau)\} $$

Again by Remark~\ref{proper remark} both $\tau^{\gamma}_{\mathsf{K},\mathsf{P}}$ and $\tau^{\gamma}_{\mathsf{K},\mathsf{N}}$ are strictly less $\omega_1$.
Thus, the following ordinal is well defined for $\gamma\in \dom(F_t)$ and $\mathsf{K}\in\pi_1(\mathcal M_t(\gamma))$:

$$\tau^{\gamma}_{\delta_\mathsf{K}}:=\begin{cases}\min(\omega_1\setminus\bigcup\{\tau^{\gamma}_{\mathsf{K},\mathsf{P}}\mid \mathsf{P}\in\mathcal M_t(\xi_0)\}) & \gamma\in\dom(F_q), \mathsf{K}\in\mathcal M_p;\\
\text{same as above}& \gamma\in\dom(F_p), \mathsf{K}\in\mathcal M_q;\\
\tau^{\gamma}_{\delta_\mathsf{K}} &\text{otherwise}.
\end{cases}$$

\end{itemize}
\end{enumerate}

Note that $t\in\text{pre}\mathbb O_{\alpha}$ and $t\leq p, q$.
The lemmata below shall prove that $t$ is indeed a legitimate condition extending both $p$ and $q$ by recursion on $\gamma$.

\begin{lemma}
Let $\alpha\in E$. For $p\in\mathbb O_{\alpha}$, suppose $\mathsf{M}\in\pi_1(\mathcal M_p)\cap\mathcal U_{st}$ and let $q\in\mathbb O_{\alpha}\cap \mathsf{M}$ extending $p\rest \mathsf{M}$ satisfying Clauses (1)--(4) above.
Suppose $\gamma\in\dom(F_t)$ is such that $t\res\gamma$ is a legitimate condition. Then $t\res(\gamma+1)$ satisfies Clause (b4) hold for $t$.
\end{lemma}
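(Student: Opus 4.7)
The plan is to follow the same case-analytic architecture used in the countable properness argument, now exploiting the special arrangement obtained from the function $g$ and the club $C$ (and the choice of $\eta$ and the interval $[\tau,\eta]$) made just before defining $t$. Since $t \res \gamma$ is already a condition by hypothesis and $t \leq p,q$ is immediate, the only work is Clause (b4). Fix therefore $\mathsf{K}\in\pi_1(\mathcal{M}_t(\gamma))$ and $\mathsf{P}\in\pi_0(\mathcal{M}_t(\xi_0))$ that is $\in_{\xi_0}$-below $\mathsf{K}$, with $\delta_{\mathsf{P}}\in D^{\gamma}_t$, of the right form for $\mathsf{K}$ in $t$ (or with $\delta_{\mathsf{P}}\in B^{\gamma}_t$), and satisfying the non-accumulation hypothesis. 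Split first on whether $\gamma\in\dom(F_q)$ or $\gamma\in\dom(F_p)\setminus\dom(F_q)$.

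In the case $\gamma\in\dom(F_p)\setminus\dom(F_q)$: if $\mathsf{K}\in\mathcal{M}_q(\gamma)$, then by construction $\tau^{\gamma}_{\delta_\mathsf{K}}=\min(\omega_1\setminus\bigcup\{\tau^{\gamma}_{\mathsf{K},\mathsf{P}}\mid\mathsf{P}\in\mathcal{M}_t(\xi_0)\})$, so the conclusion is immediate. If instead $\mathsf{K}\in\pi_1(\mathcal{M}_p(\gamma))\setminus\mathcal{M}_q(\gamma)$, I subdivide by the three possible forms of $\mathsf{P}$: (i) $\mathsf{P}\in\pi_0(\mathcal{M}_p(\xi_0))$, in which case Fact~\ref{Timer fact} plus the fact that $p$ is a condition finishes the job (possibly after replacing $\mathsf{P}$ by $\mathsf{N}\wedge\mathsf{P}$ for a suitable $\mathsf{N}\in\pi_1(\mathcal{M}_{p\res\mathsf{M}}(\xi_0))$); (ii) $\mathsf{P}\in\pi_0(\mathcal{M}_q(\xi_0))$, handled by the arrangement around $C$ and $[\tau,\eta]$: $\delta_{\mathsf{P}}$ lies in an interval on which $\one_\gamma$ forces $\dot{C}^{\gamma}_{\delta_\mathsf{K}}$ to be empty, so the non-accumulation hypothesis either fails or makes the clause vacuous; (iii) $\mathsf{P}=\mathsf{W}\wedge\mathsf{S}$ with $\mathsf{W}\in\pi_1(\mathcal{M}_q(\xi_0))$, $\mathsf{S}\in\pi_0(\mathcal{M}_p(\xi_0))$, again reduced to (i) and (ii) by the Magidor meet lemma together with the ``$\delta_{\mathsf W\wedge\mathsf S}\ge\delta_{\mathsf W\wedge\mathsf M}$'' comparison used earlier in the countable properness proof.

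In the case $\gamma\in\dom(F_q)$: if $\mathsf{K}\in\pi_1(\mathcal{M}_p(\gamma))\setminus\mathcal{M}_q(\gamma)$, then again the definition of $\tau^{\gamma}_{\delta_\mathsf{K}}$ yields the conclusion directly. Otherwise $\mathsf{K}\in\pi_1(\mathcal{M}_q(\gamma))\subseteq\mathsf{M}$; I then show that $\mathsf{P}$ is of the right form for $\mathsf{K}$ in $q$, or $\delta_{\mathsf{P}}\in B^{\gamma}_q$, so that the conclusion follows because $q$ is a condition. The verification uses the $\in_{\xi_0}$-chain property of $\mathcal{M}_p(\xi_0)\cup\mathcal{M}_q(\xi_0)$ together with the fact that for $\mathsf{P}=\mathsf{W}\wedge\mathsf{S}$ with $\mathsf{S}\notin\mathsf{M}$, the elementarity of $\mathsf{M}$ and the choice of $\dot\delta^{\pm}:=\min(\nacc(\dot C^{\gamma}_{\delta_\mathsf{K}})\setminus\delta_\mathsf{W})$, $\max(\nacc(\dot C^{\gamma}_{\delta_\mathsf{K}})\cap\dot\delta^+)$, force $\delta_{\mathsf{P}}$ into an interval of $\dot C^{\gamma}_{\delta_\mathsf{K}}$ disjoint from $\nacc$, hence the non-accumulation hypothesis is simply inconsistent.

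The main obstacle I expect is the form-($\gamma$) case for $\mathsf{K}\in\pi_1(\mathcal{M}_p(\gamma))\setminus\mathcal{M}_q(\gamma)$ at a stage $\gamma\in\dom(F_p)\setminus\dom(F_q)$: here $\mathsf{P}=\mathsf{W}\wedge\mathsf{S}$ genuinely straddles $p$ and $q$, and the argument must pass simultaneously through the Magidor-meet comparison inequality, the arrangement around the club $C$, and the cofinality-$\epsilon$ heights of $\mathsf{M}$, in order to reduce to the case that $p$ or $q$ already handles. Once that is in place, the rest of the cases are clean specializations of the countable-properness argument, adapted to allow $\mathsf{M}\in\mathcal{U}^{\kappa}_{st}$ instead of $\mathsf{M}\in\mathcal{B}_{st}$.
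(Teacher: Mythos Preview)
Your top-level case split (on whether $\gamma\in\dom(F_p)\setminus\dom(F_q)$ or $\gamma\in\dom(F_q)$, and then on $\mathsf K$) matches the paper, but you have imported the countable-properness machinery into a setting where it is neither needed nor even defined. The key structural fact you miss is that when $\mathsf M\in\mathcal U^{\kappa}_{st}$ is a Magidor model, the amalgam $\mathcal M_t=\mathcal M_p\wedge\mathcal M_q$ introduces \emph{no new meets}: one simply has $\mathcal M_t(\xi_0)=\mathcal M_p(\xi_0)\cup\mathcal M_q(\xi_0)$. Hence your form-$(\gamma)$ subcase ($\mathsf P=\mathsf W\wedge\mathsf S$ with $\mathsf W\in\mathcal M_q$, $\mathsf S\in\mathcal M_p$) is vacuous, and the ``main obstacle'' you anticipate never arises. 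Correspondingly, Fact~\ref{Timer fact}, the $\varepsilon$- and $\Lambda$-heights, and the comparison $\delta_{\mathsf W\wedge\mathsf S}\ge\delta_{\mathsf W\wedge\mathsf M}$ are all stated and proved only for countable $\mathsf M\in\pi_0(\mathcal M_p)$; invoking them here is illegitimate, and the ``cofinality-$\epsilon$ heights of $\mathsf M$'' you mention do not exist for Magidor $\mathsf M$.

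The paper's proof is accordingly much shorter than you plan. In Case~1 with $\mathsf K\in\pi_1(\mathcal M_p(\gamma))$: either $\mathsf P\in\mathcal M_p(\xi_0)$, and (b4) for $p$ applies directly; or $\mathsf P\in\mathcal M_q(\xi_0)\setminus\mathcal M_p(\xi_0)$, whence $\delta_{\mathsf P}>\epsilon_{\xi_0}$ and the choice of $q$ (via $g$ and the interval $[\tau,\eta]$) gives $\one\nolimits_\gamma\forces\delta_{\mathsf P}\notin\acc^+(\nacc(\dot C^{\gamma}_{\delta_{\mathsf K}}))$, so the clause is vacuous. In Case~2 with $\mathsf K\in\pi_1(\mathcal M_q(\gamma))$: since $\delta_{\mathsf K}<\delta_{\mathsf M}$ and $\mathsf M$ is Magidor, every $\mathsf S\in\mathcal M_t(\xi_0)$ that is $\in_{\xi_0}$-below $\mathsf K$ already lies in $\mathcal M_q(\xi_0)$; after a short check using Clause~(2) of the setup ($q\res\beta=p\res\beta$, hence $B^{\gamma}_p=B^{\gamma}_q$ for $\gamma<\beta$) to see that right-form-in-$t$ implies right-form-in-$q$, one concludes by (b4) for $q$. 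Your $\dot\delta^{\pm}$ argument there is unnecessary.
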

\begin{proof}
There are two cases to consider:

\underline{Case 1:} Suppose $\gamma\in\dom(F_p)\setminus\dom(F_q)$.
Given $\mathsf{K}\in\pi_1(\mathcal M_t(\gamma))$ and $\mathsf{P}\in\pi_0(\mathcal M_t(\xi_0))$ that is $\in_{\xi_0}$ below $\mathsf{K}$ with $\delta_\mathsf{P}\in D^{\gamma}_t$:
\begin{itemize}
\item[$\br$] Suppose $\mathsf{K}\in\pi_1(\mathcal M_q(\gamma))$. The conclusion in this case follows by the definition of $\tau^{\gamma}_{\delta_\mathsf{K}}$.

\item[$\br$] Else, $\mathsf{K}\in\pi_1(\mathcal M_p(\gamma))$.

$\br\br$ If $\mathsf{P}\in\mathcal M_{p}(\xi_0)$,
then the conclusion holds by the fact $p$ is a condition.

$\br\br$ Else, $\mathsf{P}\in\mathcal M_q(\xi_0)\setminus\mathcal M_p(\xi_0)$. Then as $\delta_\mathsf{P}(\xi_0)>\epsilon_{\xi_0}$, by the choice of $q$, $\delta_\mathsf{P}>\max\{\xi_i\mid i<n\}$.
Then, the conclusion holds trivially as $\one\nolimits_{\gamma}\forces\delta_p\notin\acc^+(\nacc(\dot{C}^{\gamma}_{\delta}))$.

\end{itemize}

\underline{Case 2:} Otherwise, $\gamma\in\dom(F_q)$. Given $\mathsf{K}\in\pi_1(\mathcal M_t(\gamma))$ and $\mathsf{P}\pi_0(\in\mathcal M_t(\xi_0))$ $\in_{\xi_0}$-below $\mathsf{K}$ with $\delta_\mathsf{P}\in D^{\gamma}_t$.

\begin{itemize}
\item[$\br$] Suppose $\mathsf{K}\in\pi_1(\mathcal M_p(\gamma))\setminus\mathcal M_{q}(\gamma)$. The conclusion in this case follows by the definition of $\tau^{\gamma}_{\delta_\mathsf{K}}$.

\item[$\br$] Else, $\mathsf{K}\in\pi_1(\mathcal M_q(\gamma))$.
If $\delta_{\mathsf P}\in B^{\gamma}_t$, then $\delta_{\mathsf P}\in B^{\gamma}_q$ and we are done.
Otherwise, $\mathsf P$ is of the right form for $\mathsf K$ in $t$.
In case $\mathsf P$ is of the right form in $q$ we are done by Clause (b4) applied to $q$.
In the other case $\mathsf P$ is not of the right form in $q$.
As we assumed it is of the right form in $t$, it follows that $\gamma<\beta$.
Thus, as $B^{\gamma}_p=B^{\gamma}_q$ by Clause (2) above,
there is no $\mathsf S$ $\xi_0$-above $\mathsf K$ and $\delta_{\mathsf S}\in B^{\gamma}_q$.
This is a contradiction.

As all $S\in\mathcal M_t(\xi_0)$ which are $\in_{\xi_0}$-below $\mathsf{K}$ are in $\mathcal M_q(\xi_0)$, the conclusion holds by the fact $q$ is a condition.

\end{itemize}
\end{proof}

\begin{lemma}
Let $\alpha\in E$. For $p\in\mathbb O_{\alpha}$, suppose $\mathsf{M}\in\pi_1(\mathcal M_p)\cap\mathcal U_{st}$ and let $q\in\mathbb O_{\alpha}\cap \mathsf{M}$ extending $p\rest \mathsf{M}$ satisfying Clauses (1)--(4) above.
Suppose $\gamma\in\dom(F_t)$ is such that $t\res\gamma$ is a legitimate condition. Then $t\res(\gamma+1)$ satisfies Clause (b1) for $t$.
\end{lemma}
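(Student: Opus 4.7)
The plan is to mirror, nearly verbatim, the preceding countable-properness lemma for Clause~(b1), with the definition of $\tau^\gamma_{\delta_{\mathsf K}}$ for $t$ and the guessing-club machinery constructed just before the statement (the function $g$, the club $C$, and the witness $\eta\in C\cap S\cap\mathsf M$) playing the role taken in the countable case by appropriateness via Lemma~\ref{appropriate lemma}. Accordingly, the proof bifurcates into Case~1, where $\gamma\in\dom(F_p)\setminus\dom(F_q)$, and Case~2, where $\gamma\in\dom(F_q)$; within each case I fix $\mathsf K\in\pi_1(\mathcal M_t(\gamma))$ and $\mathsf P\in\pi_1(\mathcal M_t(\gamma))$ that is $\in_\gamma$-below $\mathsf K$, and distinguish according to whether each of $\mathsf K$ and $\mathsf P$ lies in $\mathcal M_p$ or in $\mathcal M_q$.

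For Case~1, the sub-case $\mathsf K\in\pi_1(\mathcal M_q(\gamma))$ is immediate from the construction of $\tau^\gamma_{\delta_{\mathsf K}}$ for $t$, which was chosen to exceed every $\tau^\gamma_{\mathsf K,\mathsf P'}$ for $\mathsf P'\in\mathcal M_t$. The sub-case $\mathsf K\in\pi_1(\mathcal M_p(\gamma))$ with $\mathsf P\in\mathcal M_p$ reduces to Clause~(b1) applied to $p$, combined with $t\res\gamma\leq p\res\gamma$. In the remaining sub-case $\mathsf K\in\pi_1(\mathcal M_p(\gamma))$, $\mathsf P\in\pi_1(\mathcal M_q(\gamma))\setminus\mathcal M_p(\gamma)$, I invoke the choice of $\eta$ and $\tau$: since $\delta_{\mathsf P}$ was arranged to lie in $[\tau,\eta]$ and $\one_\gamma\forces [\tau,\eta]\cap\dot C^\gamma_{\delta_{\mathsf K}}=\emptyset$ for every $\mathsf K\in\pi_1(\mathcal M_p(\gamma))$, we obtain $t\res\gamma\forces\delta_{\mathsf P}\notin\nacc(\dot C^\gamma_{\delta_{\mathsf K}})$ and the conclusion follows, as in the countable analogue.

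For Case~2, the sub-case $\mathsf K\in\pi_1(\mathcal M_p(\gamma))\setminus\mathcal M_q(\gamma)$ is once more immediate from the definition of $\tau^\gamma_{\delta_{\mathsf K}}$, and the sub-case $\mathsf K,\mathsf P\in\mathcal M_q$ reduces to Clause~(b1) for $q$. The residual sub-case $\mathsf K\in\pi_1(\mathcal M_q(\gamma))$, $\mathsf P\in\pi_1(\mathcal M_p(\gamma))\setminus\mathcal M_q(\gamma)$, I would dispatch exactly as in the countable Case~2 analogue: Fact~\ref{Timer fact} supplies $\mathsf N\in\pi_1(\mathcal M_{p\res\mathsf M}(\gamma))$ with $\mathsf P\in_\gamma\mathsf N\in_\gamma\mathsf K$, so that $(\mathsf N\wedge\mathsf M)\res\gamma\in\mathcal M_p(\gamma)$ and, since $\mathsf P\notin\mathcal M_q(\gamma)$, $\mathsf N\wedge\mathsf M\in_\gamma\mathsf P$; setting $\dot\delta^+:=\min(\dot C^\gamma_{\delta_{\mathsf K}}\setminus\delta_{\mathsf N})$ and $\dot\delta^-$ its predecessor in $\dot C^\gamma_{\delta_{\mathsf K}}$, both names are defined inside $\mathsf M$ because $\mathsf N,\mathsf K\in\mathsf M$, elementarity yields $\dot\delta^-\in\mathsf N\cap\mathsf M\cap V_\kappa$, hence $\one_\gamma\forces\dot\delta^-<\delta_{\mathsf P}<\dot\delta^+$, and $\delta_{\mathsf P}$ is forced out of $\nacc(\dot C^\gamma_{\delta_{\mathsf K}})$.

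The step I expect to require the most care is the sub-case $\mathsf P\in\pi_1(\mathcal M_q(\gamma))\setminus\mathcal M_p(\gamma)$ in Case~1: I need to verify that the guessing-club construction really delivers an interval $[\tau,\eta]$ simultaneously disjoint from $\dot C^\gamma_{\delta_{\mathsf K}}$ for every $\mathsf K\in\pi_1(\mathcal M_p(\gamma))$, and that every newly introduced uncountable height $\delta_{\mathsf P}$ actually lands inside $[\tau,\eta]$. The former is essentially built into the defining formula for $\eta$ and $\tau$ by taking a union over all $\gamma'\in\dom(F_p)\cap\gamma$ and all $\mathsf K\in\pi_1(\mathcal M_p(\gamma'))$; the latter relies on the $\epsilon_\gamma$-threshold enforced in the definition of $g$ together with the fact that $\mathsf M$ is a Magidor model, so that $\delta_{\mathsf M}\in S^2_1$ and the points of $C\cap S\cap\mathsf M$ are unbounded below $\delta_{\mathsf M}$. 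Every other sub-case is a direct transcription of an argument already carried out in this subsection or in the countable-properness subsection.
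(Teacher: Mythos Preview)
Your Case~1 and the first two sub-cases of Case~2 match the paper's proof. The problem is the residual sub-case of Case~2, where $\mathsf K\in\pi_1(\mathcal M_q(\gamma))$ and $\mathsf P\in\pi_1(\mathcal M_p(\gamma))\setminus\mathcal M_q(\gamma)$. Your plan there is to transcribe the countable argument: invoke Fact~\ref{Timer fact} to produce $\mathsf N\in\pi_1(\mathcal M_{p\res\mathsf M}(\gamma))$ with $\mathsf P\in_\gamma\mathsf N\in_\gamma\mathsf K$, and then argue with the meet $\mathsf N\wedge\mathsf M$. Neither tool is available here. Fact~\ref{Timer fact} is stated only for $\mathsf M\in\pi_0(\mathcal M_p)$, and the meet $\mathsf N\wedge\mathsf M$ is defined only when $\mathsf N\in\mathcal U^\kappa$ and $\mathsf M\in\mathcal B$; in the present lemma $\mathsf M$ is a Magidor model, so both invocations are illegitimate.

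The paper does not repair this argument; it observes that the sub-case is empty. Since $\mathsf M\in\mathcal U^\kappa_{\mathrm{st}}$, the residue $p\res\mathsf M$ already records (restrictions of) every model of $\mathcal M_p$ of height below $\delta_{\mathsf M}$, and $q\leq p\res\mathsf M$ picks these up. Consequently every $\mathsf P\in\pi_1(\mathcal M_t(\gamma))$ that is $\in_\gamma$-below $\mathsf K\in\mathcal M_q(\gamma)$ (hence with $\delta_{\mathsf P}<\delta_{\mathsf K}<\delta_{\mathsf M}$) already lies in $\mathcal M_q(\gamma)$, and Clause~(b1) for $q$ finishes the case directly. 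The structural asymmetry between the countable and Magidor amalgamations---no new meets appear in the latter---is precisely what makes this sub-case disappear, and you should exploit it rather than try to port the countable machinery.
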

\begin{proof}
There are two main cases to consider:

\underline{Case 1:} Suppose $\gamma\in\dom(F_p)\setminus\dom(F_q)$. Given $\mathsf{K}\in\pi_1(\mathcal M_t(\gamma))$ and $\mathsf{P}\in\pi_1(\mathcal M_t(\gamma))$ which is $\in_{\gamma}$ below $\mathsf{K}$.

\begin{itemize}
\item[$\br$] Suppose $\mathsf{K}\in\pi_1(\mathcal M_q(\gamma))$. The conclusion in this case follows by the definition of $\tau^{\gamma}_{\delta_\mathsf{K}}$.

\item[$\br$] Else, $\mathsf{K}\in\pi_1(\mathcal M_p(\gamma))$.

$\br\br$ If $\mathsf{P}\in\mathcal M_{p}(\xi_0)$,
then the conclusion holds by the fact $p$ is a condition.

$\br\br$ Else $\mathsf{P}\in\mathcal M_q(\xi_0)\setminus\mathcal M_p(\xi_0)$. Hence, $\delta_\mathsf{P}>\epsilon^{\gamma}$.
So, by the choice of $q$, $t\res\gamma\forces\delta_p\notin\acc^+(\nacc(\dot{C}^{\gamma}_{\delta_\mathsf{K}}))$.
Thus, the conclusion is immediate.
\end{itemize}

\underline{Case 2:} Otherwise, $\gamma\in\dom(F_q)$. Given $\mathsf{K}\in\pi_1(\mathcal M_t(\gamma))$ and $\mathsf{P}\in\pi_1(\mathcal M_t(\gamma))$ which is $\in_{\gamma}$ below $\mathsf{K}$.
\begin{itemize}
\item[$\br$] Suppose $\mathsf{K}\in\pi_1(\mathcal M_p(\gamma))\setminus\mathcal M_q(\gamma)$. Then, the conclusion in this case follows by the definition of $\tau^{\gamma}_{\delta_\mathsf{K}}$.

\item[$\br$] Else, $\mathsf{K}\in\pi_1(\mathcal M_q(\gamma))$. As all $S\in\pi_1(\mathcal M_t(\xi_0))$ which are $\in_{\gamma}$-below $\mathsf{K}$ are in $\mathcal M_q(\gamma)$, the conclusion holds by the fact $q$ satisfies Clause (b1). \qedhere

\end{itemize}
\end{proof}

Here is our second properness theorem.
\begin{lemma}
Let $\alpha\in E$. Suppose $\mathsf{M}\in\mathcal U_{st}$ with $\eta(\mathsf{M})>\alpha$ and $\alpha\in \mathsf{M}$.
For $r\in \mathsf{M}\cap\mathbb O_{\alpha}$, $r_\mathsf{M}$ is $(\mathsf{M},\mathbb O_{\alpha})$-generic.

\end{lemma}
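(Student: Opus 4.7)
The plan is to prove the lemma by induction on $\alpha \in E$, in direct analogy with the countable properness theorem proved in the previous subsection. The base case $\alpha = \xi_0$ follows from the isomorphism $\mathbb O_{\xi_0} \cong \mathbb P^\kappa_{\xi_0}$ together with Fact~\ref{fact234}, granting strong $\mathcal U^\kappa_{st}$-properness of the Velickovic--Mohammadpour poset.

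For the inductive step, fix a dense open $D \in \mathsf M$ and a condition $p \leq r_\mathsf M$ with $p \in D$; by density it suffices to produce $q \in D \cap \mathsf M$ compatible with $p$. I would carry out the construction already outlined just above the statement: set $\beta := \sup(\dom(F_p) \cap \mathsf M)$, enumerate $\dom(F_p) \cap \beta$ as $\langle \gamma_i \mid i < n \rangle$, define the function $g : [\omega_2]^n \to [\omega_2]^n$ recording the least tuple of new $\Delta$-heights contributed by a witness $q \in D$ extending $p \res \mathsf M$ with $q \res \beta = p \res \beta$, form the associated club $C$ of closure points, and exploit the Magidor nature of $\mathsf M$ (which gives $\otp(C \cap \delta_\mathsf M) = \delta_\mathsf M$) together with Remark~\ref{proper remark} to find $\eta \in C \cap S^2_1 \cap \mathsf M$ and $\tau < \eta$ such that $\one_\gamma$ forces $[\tau, \eta]$ to be disjoint from each generic club $\dot C^{\gamma'}_{\delta_\mathsf K}$ with $\gamma' \in \dom(F_p) \cap \beta$ and $\mathsf K \in \pi_1(\mathcal M_p(\gamma'))$. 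Elementarity of $\mathsf M$ then yields the desired $q \in D \cap \mathsf M$ satisfying Clauses (1)--(4) of the construction.

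With $q$ in hand, I would form the amalgamation $t := p \wedge q$ via the prescription laid out above, and apply the two immediately preceding lemmas by recursion on $\gamma \in \dom(F_t)$ to confirm Clauses (b1) and (b4) for $t$; Clauses (a), (b2), (b3) are immediate from the construction, and the freshness of the diagonalized colors $\tau^\gamma_{\delta_\mathsf K}$ (bounded by Remark~\ref{proper remark}) yields a legitimate element of $\mathbb O_\alpha$. This produces $t \in \mathbb O_\alpha$ with $t \leq p, q$, establishing compatibility of $p$ with an element of $D \cap \mathsf M$.

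The main obstacle is the stationarity step: arranging simultaneously that $q$ lies in $D \cap \mathsf M$, agrees with $p \res \beta$ below $\beta$, and introduces no new $\Delta$-heights in any forced generic club trace of $p$'s uncountable models. The Magidor hypothesis on $\mathsf M$ is essential here, both so that the definable-in-$\mathsf M$ club $C$ meets $S^2_1$ cofinally in $\delta_\mathsf M$ and so that there are only countably many candidate traces of each $\dot C^{\gamma'}_{\delta_\mathsf K}$ below $\delta_\mathsf M$ to avoid. Once this is secured, the verifications of (b4) split cleanly: for $\gamma < \beta$ with $\gamma \in \dom(F_q)$ the inequalities $B^\gamma_t = B^\gamma_q$ and $q \res \beta = p \res \beta$ reduce matters to Clause (b4) for $q$, while for $\gamma \geq \beta$ the avoidance property forces every new countable model of $\mathcal M_q$ beyond $\epsilon_\gamma$ to land outside $\acc^+(\nacc(\dot C^\gamma_{\delta_\mathsf K}))$, making (b4) vacuously true.
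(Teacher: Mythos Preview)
Your proposal is correct and follows essentially the same approach as the paper: induction on $\alpha\in E$ with the base case handled by strong $\mathcal U^\kappa_{st}$-properness of $\mathbb P^\kappa_{\xi_0}$, and the inductive step carried out exactly via the construction of $q\in D\cap\mathsf M$ satisfying Clauses~(1)--(4) followed by the two preceding lemmas to verify that the amalgam $t$ is a legitimate condition. Your write-up in fact spells out more of the details (the role of the club $C$, the Magidor hypothesis, and the case split in verifying (b4)) than the paper's own terse proof, which simply points back to ``the argument above'' and ``the lemmata''.
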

\begin{proof}
We proceed by induction on $E$.
Note that $\mathbb O_{\min(E)}$ is $\mathcal U_{st}$-strongly proper.
Suppose now the conclusion holds for any ordinal in $E\cap\alpha$.
Suppose that $D\s\mathbb O_{\alpha}$ dense belonging to $\mathsf{M}$.
Let $p\leq r_\mathsf{M}$. Define $\beta:=\max(\dom(F_p)\cap\beta)$.
By the argument above find $q\in D$ witnessing Clauses (1)--(4) above. The lemmata show that $p$ and $q$ are indeed compatible.
Therefore, $r_\mathsf{M}$ is $(\mathsf{M},\mathbb O_{\alpha})$-generic.
\qedhere
\end{proof}

\section{Chain condition} \label{sec5}
Given the work done in the previous sections, the only task left with respect to the analysis of the cardinals structure in our forcing extension is showing the following:

\begin{lemma}
The forcing notion $\mathbb O_{\lambda}$ has the $\lambda$-Knaster property.
\end{lemma}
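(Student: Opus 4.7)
The plan is to apply a $\Delta$-system argument to the finite data carried by any $\lambda$-sized family $\langle p_\xi \mid \xi<\lambda\rangle$ of conditions in $\mathbb O_\lambda$, and then to argue that sufficiently many of the thinned conditions are pairwise compatible via explicit amalgamation. Since $\lambda$ is inaccessible, each $p_\xi$ is a finite object in $H_\lambda$, so the standard $\Delta$-system lemma applies. I will proceed in three stages: a $\Delta$-system thinning, an alignment thinning, and the amalgamation itself.

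For the thinning stage, I first shrink the family to size $\lambda$ so that $\langle \mathcal M_{p_\xi},d_{p_\xi}\rangle$ form a $\Delta$-system of pairwise compatible elements of $\mathbb P^\kappa_\lambda$; the $\lambda$-Knaster property of $\mathbb P^\kappa_\lambda$ from \cite{MohammadpourVelickovic} makes this possible. Next I further thin so that $\langle \dom(F_{p_\xi})\rangle$ is a $\Delta$-system with some root $F^*$, and for every $\gamma\in F^*$ the sets $B^\gamma_{p_\xi}$ and $D^\gamma_{p_\xi}$ form $\Delta$-systems with roots I denote by $B^*_\gamma$ and $D^*_\gamma$, with the corresponding values $\tau^\gamma_{\delta,p_\xi}$ agreeing on $D^*_\gamma$. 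For the alignment stage, I thin once more so that for $\xi<\eta$ in the thinned family, every height $\delta$ appearing in the non-root part of $p_\eta$ exceeds $\sup(\mathcal M_{p_\xi}\cap\kappa)+1$; this is possible because the relevant suprema are $<\kappa<\lambda$ while the thinned family has size $\lambda$.

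Fix $\xi<\eta$ in the final thinned family. I define the amalgamation $r$ by setting $\langle \mathcal M_r,d_r\rangle := \langle \mathcal M_{p_\xi},d_{p_\xi}\rangle\wedge \langle \mathcal M_{p_\eta},d_{p_\eta}\rangle$ in $\mathbb P^\kappa_\lambda$, $\dom(F_r):=\dom(F_{p_\xi})\cup\dom(F_{p_\eta})$, $B^\gamma_r:=B^\gamma_{p_\xi}\cup B^\gamma_{p_\eta}$, $D^\gamma_r:=D^\gamma_{p_\xi}\cup D^\gamma_{p_\eta}$ (with the convention that the value is whichever side defines it when $\gamma\notin F^*$), and the $\tau$-values glued consistently across the two sides using Stage~1 agreement. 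Clauses~(a) and (b1)--(b3) of Definition~\ref{defn33} are then verified essentially as in Lemma~\ref{complete suborder}.

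The main obstacle is Clause~(b4): given $\gamma\in\dom(F_r)$, $\mathsf K\in\pi_1(\mathcal M_r(\gamma))$ and $\mathsf P\in\pi_0(\mathcal M_r(\xi_0))$ that is $\in_{\xi_0}$-below $\mathsf K$ with $\delta_\mathsf P\in D^\gamma_r$, and either $\mathsf P$ of the right form for $\mathsf K$ or $\delta_\mathsf P\in B^\gamma_r$, I must show $r\res\gamma\forces \tau^\gamma_{\delta_\mathsf K}\notin \dot h^\gamma_{\delta_\mathsf K}[\delta_\mathsf P]$. The "same-side" subcases, where $\mathsf K$ and $\mathsf P$ come from a single $p_\xi$, reduce to Clause~(b4) for $p_\xi$ together with the observation that the $\tau$-values agree on the root. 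For the "cross" subcases, the alignment of Stage~2 forces the heights coming from different sides to be so separated that $\mathsf P$ cannot even sit $\in_{\xi_0}$-below $\mathsf K$, so the hypothesis of (b4) does not arise. The most delicate part is the treatment of fresh "meet" models $\mathsf P\in(\mathcal M_{p_\xi}\wedge\mathcal M_{p_\eta})(\xi_0)\setminus(\mathcal M_{p_\xi}(\xi_0)\cup \mathcal M_{p_\eta}(\xi_0))$ produced by the $\mathbb P^\kappa_\lambda$-amalgamation: by Fact~\ref{Timer fact} each such $\mathsf P$ sits in an interval of the form $[\mathsf N\wedge\mathsf M,\mathsf N)$ determined by a pair of models from a single side, and a case analysis mirroring the $(\alpha)$--$(\gamma)$ breakdown of Section~\ref{sec4} reduces the required forcing statement back to an instance of Clause~(b4) already holding inside that single side. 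This last reduction is where essentially all the real work of the proof lies.
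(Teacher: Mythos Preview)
Your overall strategy (thin via $\Delta$-systems, then amalgamate) is the right one, but you take a detour that creates a difficulty you then do not actually resolve. The paper's argument is considerably simpler because of one structural choice you did not make: it applies the $\Delta$-system lemma not to the families of models but to the sets of \emph{active levels} $a_i=\bigcup\{a(\mathsf M)\mid \mathsf M\in\mathcal M_{p_i}\}$ (together with $\dom(F_{p_i})$), obtaining a head--tail--tail system with root of maximum $\gamma$, and additionally thins so that $\mathcal M_{p_i}\res\gamma$ is a single fixed $\mathcal M$ for all $i$. With this in hand, the amalgam of any two surviving conditions is the \emph{literal union} $\mathcal M_p\cup\mathcal M_q$: at every level $\le\gamma$ the two sides coincide, and at every level $>\gamma$ at most one side has any active models. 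No fresh meet models are produced, and Clauses~(b1)--(b4) are inherited verbatim from whichever side is relevant at the given level.

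Your route, by contrast, black-boxes the $\lambda$-Knaster property of $\mathbb P^\kappa_\lambda$, extracting only ``pairwise compatible'' rather than the level-separation that makes the union a condition. You are then forced to take $\langle\mathcal M_r,d_r\rangle$ to be the $\mathbb P^\kappa_\lambda$-meet, which can generate fresh meet models, and you propose to control these via Fact~\ref{Timer fact}. That fact, however, is stated and proved for the properness amalgamation, where one has $\mathsf M\in\pi_0(\mathcal M_p)$ and $\mathcal M_q\in\mathsf M$; it says nothing about the meet of two arbitrary compatible conditions. So the ``$(\alpha)$--$(\gamma)$ breakdown of Section~\ref{sec4}'' does not transfer as you suggest, and the clause you flag as ``where essentially all the real work lies'' is left as a genuine gap. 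The fix is to open the black box: carry out the thinning on the active levels (and on $\dom(F_{p_\xi})$) yourself, arrange the restrictions below the root to be identical, and observe that the amalgam is then just a union, after which the verification of (b4) is immediate and the fresh-meet issue disappears.
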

\begin{proof}
Let $\langle p_i\mid i<\lambda\rangle$ be a set of conditions in $\mathbb O_\lambda$.
We need to find two distinct $i,j<\lambda$ such that $p_i,p_j$ are compatible.
For a given $i<\lambda$ let $a_i:=\bigcup\{a(\mathsf M)\mid \mathsf M\in\mathcal M_{p_i}\}$.
Note that for each $i<\lambda$, $a_i$ is a $<\kappa$ closed subset of $E$.
By applying the $\Delta$-system lemma and counting there exist $A\in[\lambda]^{\lambda}$ such that:
\begin{itemize}
\item[$\br$] $\langle F_{p_i}\mid i\in A\rangle$ is a head-tail-tail $\Delta$ system with root $F$;
\item[$\br$] $\langle a_i\mid i\in A\rangle$ is a head-tail-tail $\Delta$ system with root $a$;
\item[$\br$] for $\gamma=\max\{\max(a),\max(F)\}$ there exists $\mathcal M\in\mathbb M^{\kappa}_{\gamma}$ such that for any $i\in A$, $\mathcal M_{p_i}\rest\gamma=\mathcal M$.
\end{itemize}

Now pick two distinct $i,j\in A$. Define $t:=\langle\mathcal M_t,d_t,F_t\rangle$ as follows:
\begin{itemize}
\item $\mathcal M_r:=\mathcal M_p\cup\mathcal M_q$;
\item $d_r(\mathsf{M}):=\begin{cases} d_p(\mathsf{M}) & \mathsf{M}\in\mathcal M_p;\\
d_q(\mathsf{M}) & \text{otherwise}.
\end{cases} $
\item $\dom(F_r):=\dom(F_p)\cup\dom(F_q)$.
\item $F_r(\gamma):=\begin{cases}
F_p(\gamma),& \gamma\in\dom(F_p);\\
F_q(\gamma),& \gamma\in\dom(F_q).
\end{cases}$.
\end{itemize}

It is straightforward to check that $r\in\mathbb O_{\lambda}$ and $r\leq p,q$.
\end{proof}

\section{Connecting the dots} \label{sec6}

In this final section we show that we have achieved our goal.
\begin{definition}
Suppose $G$ is $V$-generic over $\mathbb O_{\lambda}$ and $\alpha\in E$ is an ordinal of cofinality less than $\kappa$. Let:
\begin{itemize}
\item[(i)] $\mathcal M^{\alpha}_G:=\bigcup_{p\in G}\mathcal M^{\alpha}_p$;
\item[(ii)] $C_{\alpha}(G):=\{\delta_{\mathsf M}\mid\mathsf M\in\mathcal M^{\alpha}_G\}$;
\item[(iii)] $D_{\alpha}(G):=\bigcup_{p\in G} D^{\alpha}_p$.
\end{itemize}

Notice that, $D_{\alpha}(G)\s C_{\alpha}(G)$ and by Clause (b2),
$\Delta_1(D_{\alpha}(G))=\Delta_1(C_{\alpha}(G))$.
\end{definition}

The forcing $\mathbb P^\kappa_\lambda$ adds a tower of $\lambda$ many clubs in $\kappa$.
A verification similar to the one from \cite{MR3821634} shows that our forcing produces such a tower.
The details now follow.
\begin{lemma}
Let $G$ be $V$-generic over $\mathbb O_{\lambda}$ and $\alpha\in E$ of cofinality $<\kappa$.
Then $C_{\alpha}(G)$ is a club in $\kappa$.
Moreover, if $\alpha<\beta$ in $E$ of cofinality $<\kappa$, then $C_{\beta}(G)\setminus C_{\alpha}(G)$ is bounded in $\kappa$.
\end{lemma}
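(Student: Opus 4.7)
The plan is to adapt the analogous verification for $\mathbb P^\kappa_\lambda$ from \cite{MR3821634} to our setting, absorbing the extra coordinate $F_p$ via routine density manipulations using the master-condition technology of Section~\ref{sec4}. I would organize the proof in three steps.

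For \emph{unboundedness of $C_\alpha(G)$}, given $\mu<\kappa$ and a condition $p\in\mathbb O_\lambda$, stationarity of $\mathcal U^\kappa_{\text{st}}$ in $\mathcal P_\kappa(V_\lambda)$ (granted by the supercompactness of $\kappa$) furnishes a standard Magidor model $\mathsf N$ containing $p$, $\alpha$, and a cofinal $\cf(\alpha)$-sequence in $\alpha$ (possible since $\cf(\alpha)<\kappa$), with $\delta_\mathsf N>\mu$. This choice guarantees $\alpha\in a(\mathsf N)$. The $\kappa$-properness construction from Section~\ref{sec4} then produces an extension $p_\mathsf N\leq p$ with $\mathsf N\in\mathcal M_{p_\mathsf N}$; density yields $\delta_\mathsf N\in C_\alpha(G)$.

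For \emph{closure}, note that $\mathcal M^\alpha_G$ is an $\in_\alpha$-chain in $V[G]$ by Clause (2) of the definition of $\mathbb M^\kappa_\alpha$, so $C_\alpha(G)$ is a linearly ordered set of ordinals below $\kappa$. For a limit point $\delta<\kappa$ of $C_\alpha(G)$, a density argument exhibits a condition in $G$ containing a model $\mathsf M$ at level $\alpha$ with $\delta_\mathsf M=\delta$: a countable model absorbing a cofinal $\omega$-sequence in $\delta$ if $\cf(\delta)^{V[G]}=\omega$, or a Magidor model with a cofinal $\omega_1$-sequence in $\delta$ if $\cf(\delta)^{V[G]}=\omega_1$. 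The extra coordinate $F_p$ causes no obstruction: Remark~\ref{proper remark} lets us assign fresh countable colors $\tau^\gamma_{\delta_\mathsf M}$ compatible with Clauses (b1)--(b4), exactly as in the proof of properness.

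For the \emph{tower property}, the last bullet of the remark following the definition of $\mathsf M\res\alpha$ gives $\delta_{\mathsf M\res\alpha}=\delta_{\mathsf M\res\beta}$ whenever $\alpha,\beta\geq\kappa$ and both restrictions are defined. Since every member of $E$ lies above $\kappa$, this applies to every $\mathsf M\in\mathcal M_G$ with $\{\alpha,\beta\}\s a(\mathsf M)$. Hence any $\delta\in C_\beta(G)\setminus C_\alpha(G)$ must be witnessed by some $\mathsf M\in\mathcal M_G$ with $\beta\in a(\mathsf M)$ yet $\alpha\notin a(\mathsf M)$; since $a(\mathsf M)$ is closed in $E$, this forces $\sup(a(\mathsf M)\cap\alpha)<\alpha$. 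The main obstacle is to conclude that only boundedly many such $\mathsf M$ can occur in the generic. Here the hypothesis $\cf(\alpha)<\kappa$ is essential: I would show that, for a dense set of $p$, any sufficiently large Magidor $\mathsf M\in\mathcal M_p$ has $\mathsf M\cap E\cap\alpha$ cofinal in $\alpha$ (and is therefore strongly active at $\alpha$), using that $\cf(\alpha)<\kappa$ ensures the cofinal sequence can be absorbed into any sufficiently expansive Magidor. Combining this with the preceding two steps yields the claim.
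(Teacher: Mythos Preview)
Your unboundedness and tower sketches are essentially on the right track and close to what the paper does. The real gap is in your closure argument.

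You propose, given a limit point $\delta$ of $C_\alpha(G)$, to exhibit by density a model $\mathsf M$ active at $\alpha$ with $\delta_\mathsf M=\delta$, built by ``absorbing a cofinal sequence'' in $\delta$. This ignores the chain constraint: in the generic $\in_\alpha$-chain $\mathcal M^\alpha_G$ there may already be a model $\mathsf N$ with $\delta<\delta_\mathsf N$, and inserting $\mathsf M$ below $\mathsf N$ requires $\mathsf M\in_\alpha\mathsf N$, i.e.\ an $\alpha$-isomorphic copy of $\mathsf M$ must actually lie inside $\mathsf N$. When $\mathsf N$ is countable and \emph{not} strongly active at $\alpha$ (so $\alpha\notin\mathsf N$ and $\mathsf N$ only sees $E\cap\alpha$ up to some $\bar\alpha<\alpha$), there is no reason your freshly built $\mathsf M$ should have such a copy in $\mathsf N$. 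Your remark that ``the extra coordinate $F_p$ causes no obstruction'' is correct but beside the point: the obstruction lives already at the $\mathbb M^\kappa_\lambda$ level, and assigning colors $\tau^\gamma_{\delta_\mathsf M}$ does nothing to resolve it.

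The paper's argument is organized quite differently. It proves the tower clause first, then proves closure by \emph{induction on $\alpha$} and by contradiction. Assuming $\nu\in\acc(\dot C_\alpha)\setminus\dot C_\alpha$, it isolates the least $\mathsf M\in\dot{\mathcal M}^\alpha$ with $\nu<\delta_\mathsf M$ and its $\in_\alpha$-predecessor $\mathsf P$. If $\mathsf M$ is strongly active at $\alpha$, the decoration $d_p$ is used: one enlarges $d_p(\mathsf P)$ by an ordinal $\delta\in\mathsf M$ with $\nu\le\delta$, which (via condition $(\star)$) prevents any model from ever appearing between $\mathsf P$ and $\mathsf M$, contradicting $\nu\in\acc(\dot C_\alpha)$. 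If $\mathsf M$ is countable and not strongly active at $\alpha$, one sets $\bar\alpha:=\sup(\mathsf M\cap\alpha)$, uses the already-proved tower clause to transfer $\nu\in\acc(\dot C_{\bar\alpha})$, applies the inductive hypothesis to get $\nu\in\dot C_{\bar\alpha}$, and then \emph{lifts} the relevant $\bar\alpha$-models to $\alpha$-models through $\alpha^*:=\min(\mathsf M\setminus\alpha)$ using the hull $\hull(\mathsf M,V_\alpha)$, so that the lifted chain lies in $\mathsf M$ and fits into the $\in_\alpha$-chain. Neither the decoration step nor the lifting step appears in your sketch, and both are essential.
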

\begin{proof}

We start with the second statement.
Given $\alpha<\beta$ in $E$ of cofinality $<\kappa$, by a density argument we may find $p\in G$ and a Magidor model $\mathsf M\in\mathcal M_G$ which is active at both $\alpha,\beta$.
Therefore, any model $\mathsf N$ which is $\in_{\beta}$ above $\mathsf M\rest\beta$ in $\mathcal M^{\beta}_G$ is active at $\alpha$.
We work in $V$ and prove the statement by induction on $\alpha$.
Let $\dot{\mathcal M}_{\alpha}$ and $\dot{C}_{\alpha}$ be a canonical $\mathbb O_{\lambda}$ names for $\mathcal M^{\alpha}_G$ and $C_{\alpha}(G)$ for $\alpha\in E$.
Fix $\alpha\in E$ of cofinality $<\kappa$ and suppose that for any $\beta\in E\cap\alpha$ the statement is true.
Let $\nu<\kappa$ and suppose that some $p\in\mathbb O_{\lambda}$ forces that
$\nu\in\acc(\dot{C}_{\alpha})$ but $\nu\notin\dot{C}_{\alpha}$.
We may assume, by extending $p$ if necessary, that there is a model $\mathsf M$ such that $p$ forces $\mathsf M$ to be the least in the $\alpha$-chain $\dot{\mathcal M}^{\alpha}$ such that $\nu<\delta_{\mathsf M}$.
Let $\mathsf P$ be the last model on $\mathcal M^{\alpha}_p$ $\in_{\alpha}$-below $\mathsf M$.
Note that such a model exists since $p$ forces $\nu\in\acc(\dot{C}_{\alpha})$
and $\delta_{\mathsf P}<\nu$ as $\nu\notin\dot{C}_{\alpha}$.

We now split into two cases:
\begin{itemize}
\item[$\br$] Suppose $\mathsf M$ is strongly active at $\alpha$. Since $\mathsf P$ is $\mathcal M_p$-free by our assumption, we may assume that $\mathsf P\in\dom(d_p)$.
As $\nu<\delta_{\mathsf M}$ we may find $\delta\in\mathsf M$ such that $\nu\leq\delta<\mathsf M$.
We define a condition $q:=\langle\mathcal M_q,d_q,F_q\rangle$ as follows:
\begin{itemize}
\item[(i)] $\mathcal M_q:=\mathcal M_p$;
\item[(ii)] $F_q=F_p$;
\item[(iii)] $\dom(d_q):=\dom(d_p)$;
\item[(iv)] $d_q(\mathsf S):=\begin{cases} d_p(\mathsf P)\cup\{\delta\}& \mathsf S=\mathsf P;\\
d_p(\mathsf S)&\text{otherwise.}
\end{cases}$
\end{itemize}
Then $q$ forces there is no element of $\dot{C}_{\alpha}$ between $\nu$ and $\delta_{\mathsf P}$ contradicting it forces that $\nu\in\acc(\dot{C}_{\alpha})$.

\item[$\br$] Else $\mathsf M$ is not strongly active at $\alpha$.
Then $\mathsf M$ must be countable.
Denote $\mathcal A:=(\hull(\mathsf M, V_{\alpha})\in,\kappa)$, and let $\alpha^*:=\min((\mathsf M\cap V_{\lambda})\setminus\alpha)$, $\bar{\alpha}:=\sup(\mathsf M\cap\alpha)$.
Note that $\alpha^*\in E$ and $\bar{\alpha}$ is a limit point of $E$ of cofinality $\omega$.
In addition $\mathsf P$ is active at $\bar{\alpha}$.
By the second part of our statement we may assume $p$ forces that $\dot{C}_{\alpha}\setminus\dot{C}_{\bar{\alpha}}\s\delta_{\mathsf P}$, so it also forces that $\nu\in\acc(\dot{C}_{\bar{\alpha}})$.
By the inductive hypothesis $\dot{C}_{\bar{\alpha}}$ is forced to be a club.
So, we may find $q\leq p$ and some $\mathsf N\in\mathcal M^{\bar{\alpha}}_q$
such that $\delta_{\mathsf N}=\nu$.

Now for each $\mathsf Q\in(\mathcal M_q\res\mathsf M)^{\bar{\alpha}}$, find the unique model $\mathsf Q^*\in\mathsf M$ such that $\mathsf Q^*\in\mathcal V^{\mathcal A}_{\alpha^*}$ and $\mathsf Q^*\res\bar{\alpha}=\mathsf Q$.
Denote $\mathcal M^*:=\{\mathsf Q^*\mid\mathsf Q\in(\mathcal M_p\res\mathsf M)^{\bar{\alpha}}\}$.
Working in $\mathcal A$, $\mathcal M^*$ is an $\in_{\alpha^*}$-chain closed under meets that are active at $\alpha^*$.
Let $\mathcal M:=\{\mathsf Q^*\res\alpha\mid \mathsf Q^*\in\mathcal M^*\}$.
Then, $\mathcal M\in_{\alpha}\mathsf M$ and it is an $\in_{\alpha}$-chain closed under meets which are active at $\alpha$.
In addition, $(\mathcal M_q\res\mathsf M)^{\alpha}\s\mathcal M$ and $\Delta(\mathcal M_q(\gamma))=\Delta(\mathcal M(\gamma))$ for all $\gamma\in\dom(F_q)\cap\alpha$.

We define a condition $r:=\langle\mathcal M_r,d_r,F_r\rangle$ as follows:
\begin{itemize}
\item[(i)] $\mathcal M_r$ is the closure under meets of $\mathcal M_p\cup\mathcal M$;
\item[(ii)] $d_r:=d_p$;
\item[(iii)] $F_r:=F_q$.
\end{itemize}

Using \cite[Lemma 3.25]{MohammadpourVelickovic} one may verify that $r$ is indeed a condition extending $q$.
Recall that $\mathsf N\in\mathcal M^{\bar{\alpha}}_q$ and $\delta_{\mathsf N}=\nu$.
Let $\mathsf Q$ be the immediate predecessor of $\mathsf M\rest\bar{\alpha}$ in $\mathcal M^{\bar{\alpha}}_r$.
Then, $\mathsf Q^*\in\mathcal M^*$ and therefore $\mathsf Q^*\rest\alpha\in\mathcal M$.
Denote $R:=\mathsf Q^*\rest\alpha$ and note that $\delta_{\mathsf Q}=\delta_{\mathsf R}$.
But, $r$ forces that $\nu\leq\delta_{\mathsf R}\leq\delta_{\mathsf M}$.
This is a contradiction to the fact that $p$ forces that $\mathsf M$ is the least model in $\dot{\mathcal M}^{\alpha}$ such that $\nu\leq\delta_{\mathsf M}$.
The proof is now complete.\qedhere

\end{itemize}

\end{proof}

\begin{cor} For every $\alpha\in E$ of cofinality less than $\kappa$,
$\Delta_1(D_{\alpha}(G))$ is a club in $S^2_1$. \qed
\end{cor}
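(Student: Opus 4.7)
The plan is to read this off from the preceding lemma via two observations: the remark that $\Delta_1(D_{\alpha}(G))=\Delta_1(C_{\alpha}(G))$, and the identification $\Delta_1(C_{\alpha}(G))=C_{\alpha}(G)\cap S^2_1$. With these in hand, the conclusion reduces to the standard set-theoretic fact that $C\cap S^2_1$ is a club in $S^2_1$ whenever $C$ is a club in $\omega_2$: unboundedness follows from the stationarity of $S^2_1$ in $\omega_2$, and closure in $S^2_1$ is immediate because $C$ itself is closed.

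The crucial step is therefore the identification $\Delta_1(C_{\alpha}(G))=C_{\alpha}(G)\cap S^2_1$. For the inclusion $\subseteq$, I would argue that if $\delta=\delta_{\mathsf M}$ for some uncountable Magidor $\mathsf M\in\mathcal M^\alpha_G$, then in the extension $|\mathsf M|=\omega_1$, which bounds $\cf(\delta)\leq\omega_1$; on the other hand, the Magidor condition $\cf(\bar\gamma)>\pi(\kappa)$ forces $\cf(\delta)$ to be uncountable in $V$, and since $\omega_1$ is preserved by the forcing (established in Section~\ref{sec4}), we obtain $\cf(\delta)=\omega_1$. For the reverse inclusion $\supseteq$, I would use that every countable $\mathsf M\in\mathcal M^\alpha_G$ produces $\delta_{\mathsf M}$ of cofinality $\omega$ in the extension, so whenever some $\delta_{\mathsf M}\in C_\alpha(G)$ lies in $S^2_1$, the corresponding model $\mathsf M$ must lie in $\pi_1(\mathcal M^\alpha_G)$, giving $\delta_{\mathsf M}\in\Delta_1(C_\alpha(G))$.

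The preceding lemma exhibits $C_{\alpha}(G)$ as a club in $\kappa$, which after collapse is a club in $\omega_2$; intersecting with the stationary set $S^2_1$ then produces a club of $S^2_1$ as desired. The main subtle point is the cofinality calculation above, which ultimately rests on the cardinal-preservation analysis of Section~\ref{sec4} (that $\omega_1$ is preserved and $\kappa$ becomes $\omega_2$); the remaining bookkeeping is routine unpacking of the definitions of $\mathcal M^\alpha_G$, $C_\alpha(G)$, and $\Delta_1(\cdot)$.
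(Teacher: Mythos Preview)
Your proposal is correct and matches the paper's approach: the corollary is marked with a bare \qed\ because it is meant to follow immediately from the preceding lemma together with the equality $\Delta_1(D_\alpha(G))=\Delta_1(C_\alpha(G))$ already noted in the definition, and you have accurately unpacked the routine cofinality computation (Magidor models give $\delta_{\mathsf M}\in S^2_1$, countable models give $\delta_{\mathsf M}$ of cofinality $\omega$) that makes this immediate. One small remark: your appeal to ``$\cf(\bar\gamma)>\pi(\kappa)$'' is slightly indirect---what you really use is that $\delta_{\mathsf M}=\pi(\kappa)$ is genuinely inaccessible in $V$ (since the transitive collapse $V_{\bar\gamma}$ is correct about this), hence regular, and therefore of cofinality $\omega_1$ in the extension once $\kappa$ becomes $\omega_2$.
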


\begin{lemma}
$\mathbb O_{\lambda}$ forces the failure of $\mho(S^2_1,\omega_1)$.
\end{lemma}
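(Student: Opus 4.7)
The plan is to work in $V[G]$, fix an arbitrary purported witness $\vec x := \langle (h_\delta, C_\delta) : \delta \in S^2_1 \rangle$ to $\mho(S^2_1,\omega_1)$, and exhibit a cofinal $A \subseteq \omega_2$ demonstrating the failure of the third clause in the definition: for every $\delta \in S^2_1$ there will be a color $\tau<\omega_1$ such that
$$\sup\{\alpha \in A \cap \delta \mid \min(C_\delta\setminus\alpha) \in S^2_1 \text{ and } h_\delta(\alpha)=\tau\} < \delta.$$

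First, I would reduce to the case that $\vec x$ has a nice $\mathbb O_{\gamma_0}$-name for some $\gamma_0<\lambda$. Fix a nice $\mathbb O_\lambda$-name $\dot{\vec x}$ for $\vec x$: for each triple $(\delta,\alpha,\tau) \in \omega_2 \times \omega_2 \times \omega_1$, fix a maximal antichain deciding $\dot h_\delta(\alpha)=\tau$, and analogously for $\dot C_\delta$. By the $\lambda$-Knaster property of $\mathbb O_\lambda$ (Section~\ref{sec5}), each antichain has size $<\lambda$. Using the regularity of $\lambda$ together with $\omega_2=\kappa<\lambda$, the union of these $\kappa$-many antichains has size $<\lambda$, and hence, since $\mathbb O_\lambda = \bigcup_{\gamma \in E} \mathbb O_\gamma$ and each $\mathbb O_\gamma$ is a complete suborder by Lemma~\ref{complete suborder}, this support sits inside $\mathbb O_{\gamma_0}$ for some $\gamma_0 \in E$. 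Since $\phi$ enumerates each element of $H_\lambda$ cofinally often, I can further pick $\gamma \in E$ with $\gamma \geq \gamma_0$ and $\phi(\gamma)=\dot{\vec x}$, so that $\gamma$ is good in the sense of Definition~\ref{defn33}.

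Next, I would argue that $A := \Delta_1(D_\gamma(G))$ is the desired set. By the preceding corollary, $A$ is a club in $S^2_1$, hence in particular cofinal in $\omega_2$. For every $\delta \in A$ the forcing assigns a canonical color $\tau^\gamma_\delta<\omega_1$: for any $p\in G$ with $\delta\in\Delta_1(D^\gamma_p)$ put $\tau^\gamma_\delta:=\tau^\gamma_{\delta,p}$; this is independent of $p$ because the final clause of the ordering on $\text{pre}\mathbb O_\alpha$ forces colors to agree along refinements. Applying Clause~(b1) of Definition~\ref{defn33} to such a $p$, and using that $\vec x$ lies in $V[G\cap\mathbb O_\gamma]$, we obtain in $V[G]$ that $h_\delta[\Delta_1(D^\gamma_p)] \cap \{\tau^\gamma_\delta\} = \emptyset$. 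A density argument now shows: for any $\alpha,\delta \in A$ with $\alpha<\delta$, choosing $p\in G$ below witnesses for each of $\alpha$ and $\delta$ yields $\alpha,\delta\in\Delta_1(D^\gamma_p)$ and hence $h_\delta(\alpha) \neq \tau^\gamma_\delta$ in $V[G]$.

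Finally, I would close with a case split on $\delta \in S^2_1$. If $\delta \in A$, take $\tau := \tau^\gamma_\delta$; by the previous paragraph the set $\{\alpha \in A\cap\delta \mid h_\delta(\alpha)=\tau^\gamma_\delta\}$ is empty, so the supremum in question is $0 < \delta$, regardless of the side condition $\min(C_\delta\setminus\alpha) \in S^2_1$. If $\delta \in S^2_1 \setminus A$, then closure of $A$ in $S^2_1$ forces $A\cap\delta$ to be bounded below $\delta$, and any $\tau$ works. The step I expect to be the main obstacle is the nice-name reduction in the second paragraph, which hinges on balancing the $\lambda$-Knaster estimates against the cofinal bookkeeping of $\phi$ and the complete-suborder structure of $\{\mathbb O_\gamma\mid\gamma\in E\}$; once that is in hand, the argument collapses cleanly onto Clause~(b1) with Clauses (b3)--(b4) in the background ensuring that the colors $\tau^\gamma_\delta$ have been consistently legislated across conditions in the generic filter.
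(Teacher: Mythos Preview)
Your proposal is correct and follows essentially the same approach as the paper's proof: reduce the name to some $\mathbb O_{\gamma_0}$ via the $\lambda$-cc, use the bookkeeping $\phi$ to find a good $\gamma\ge\gamma_0$, and then let $A:=\Delta_1(D_\gamma(G))$ witness the failure by invoking Clause~(b1). Your version is in fact more complete than the paper's: you spell out the density argument showing $h_\delta(\alpha)\neq\tau^\gamma_\delta$ for all $\alpha\in A\cap\delta$, and you add the (necessary) case split on whether $\delta\in A$, using closure of $A$ relative to $S^2_1$ to dispose of the case $\delta\notin A$; the paper's proof leaves both of these implicit.
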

\begin{proof} Suppose not.
Fix an $\mathbb O_{\lambda}$-nice name $\dot{h}$ for a sequence $\langle (h_\delta,C_\delta)\mid\delta\in S^2_1\rangle$ witnessing $\mho(S^2_1,\omega_1)$.
As $\mathbb O_{\lambda}$ has the $\lambda$-cc. and $\dot{h}$ is a nice name for set of size $\aleph_2$, we may find $\alpha<\omega_3$ large enough such that all the non-trivial condition deciding the value of $\dot{h}$ are contained in $\mathbb O_{\alpha}$

It thus follows that $\dot{h}$ admits a $\mathbb O_{\alpha}$ nice name, say $\sigma$.
Therefore, we may find $\gamma<\omega_3$ above $\alpha$ such that $\phi(\gamma)=\sigma$.
In this case, for $\gamma^+:=\min(E\setminus(\gamma+1))$, $\mathbb O_{\gamma^+}$ introduces the club $\Delta_1(D_{\gamma}(G))$
satisfying that for every $\delta$ in this club and of cofinality $\omega_1$,
there exists a $\tau_{\delta}<\omega_1$ such that, $\tau_{\delta}\notin h_{\delta}[\Delta_1(D_{\gamma}(G))]$.
\end{proof}

\section{Acknowledgments}

This author is supported by the Israel Science Foundation (grant agreement 203/22).
This result is taken from the author's Ph.D. thesis written under the supervision of Assaf Rinot at Bar-Ilan University.
I thank him for presenting me the problem and for many useful suggestions.

\end{document}